\newif\ifpersonal
\theoremstyle{plain}
\newtheorem{thm}{Theorem}[section]
\newtheorem{thm-intro}[thm]{Theorem}
\newtheorem{lem}[thm]{Lemma}
\newtheorem{prop}[thm]{Proposition}
\newtheorem*{prop*}{Proposition}
\newtheorem{conj}[thm]{Conjecture}
\newtheorem{assumption}[thm]{Assumption}
\theoremstyle{definition}
\newtheorem{defin}[thm]{Definition}
\newtheorem{rem}[thm]{Remark}
\newtheorem{construction}[thm]{Construction}
\numberwithin{equation}{section}
\newcommand*{\personal}[1]{\textcolor[rgb]{0,0,1}{(Personal: #1)}}
\newcommand*{\todo}[1]{\textcolor{red}{(Todo: #1)}}
\newcommand*{\personal}[1]{\ignorespaces}
\newcommand*{\todo}[1]{\ignorespaces}
\newcommand{\C}{\mathbb C}
\newcommand{\R}{\mathbb R}
\newcommand{\Z}{\mathbb Z}
\newcommand{\rb}{\mathrm b}
\newcommand{\rd}{\mathrm d}
\newcommand{\rf}{\mathrm f}
\newcommand{\ri}{\mathrm i}
\newcommand{\rn}{\mathrm n}
\newcommand{\rp}{\mathrm p}
\newcommand{\fC}{\mathfrak C}
\newcommand{\fX}{\mathfrak X}
\newcommand{\ff}{\mathfrak f}
\newcommand{\cA}{\mathcal A}
\newcommand{\cB}{\mathcal B}
\newcommand{\cC}{\mathcal C}
\newcommand{\cM}{\mathcal M}
\newcommand{\cO}{\mathcal O}
\newcommand{\cY}{\mathcal Y}
\DeclareFontFamily{U}{BOONDOX-calo}{\skewchar\font=45 }
\DeclareFontShape{U}{BOONDOX-calo}{m}{n}{<-> s*[1.05] BOONDOX-r-calo}{}
\DeclareFontShape{U}{BOONDOX-calo}{b}{n}{<-> s*[1.05] BOONDOX-b-calo}{}
\DeclareMathAlphabet{\mathcalboondox}{U}{BOONDOX-calo}{m}{n}
\newcommand{\bbA}{\mathbb A}
\newcommand{\bbP}{\mathbb P}
\newcommand{\bB}{\mathbf B}
\newcommand{\be}{\mathbf e}
\newcommand{\sX}{\mathscr X}
\let\save@mathaccent\mathaccent
\newcommand*\if@single[3]{%
	\setbox0\hbox{${\mathaccent"0362{#1}}^H$}%
	\setbox2\hbox{${\mathaccent"0362{\kern0pt#1}}^H$}%
	\ifdim\ht0=\ht2 #3\else #2\fi
}
\newcommand*\rel@kern[1]{\kern#1\dimexpr\macc@kerna}
\newcommand*\widebar[1]{\@ifnextchar^{{\wide@bar{#1}{0}}}{\wide@bar{#1}{1}}}
\newcommand*\wide@bar[2]{\if@single{#1}{\wide@bar@{#1}{#2}{1}}{\wide@bar@{#1}{#2}{2}}}
\newcommand*\wide@bar@[3]{%
	\begingroup
	\def\mathaccent##1##2{%
		%Enable nesting of accents:
		\let\mathaccent\save@mathaccent
		%If there's more than a single symbol, use the first character instead (see below):
		\if#32 \let\macc@nucleus\first@char \fi
		%Determine the italic correction:
		\setbox\z@\hbox{$\macc@style{\macc@nucleus}_{}$}%
		\setbox\tw@\hbox{$\macc@style{\macc@nucleus}{}_{}$}%
		\dimen@\wd\tw@
		\advance\dimen@-\wd\z@
		%Now \dimen@ is the italic correction of the symbol.
		\divide\dimen@ 3
		\@tempdima\wd\tw@
		\advance\@tempdima-\scriptspace
		%Now \@tempdima is the width of the symbol.
		\divide\@tempdima 10
		\advance\dimen@-\@tempdima
		%Now \dimen@ = (italic correction / 3) - (Breite / 10)
		\ifdim\dimen@>\z@ \dimen@0pt\fi
		%The bar will be shortened in the case \dimen@<0 !
		\rel@kern{0.6}\kern-\dimen@
		\if#31
		\overline{\rel@kern{-0.6}\kern\dimen@\macc@nucleus\rel@kern{0.4}\kern\dimen@}%
		\advance\dimen@0.4\dimexpr\macc@kerna
		%Place the combined final kern (-\dimen@) if it is >0 or if a superscript follows:
		\let\final@kern#2%
		\ifdim\dimen@<\z@ \let\final@kern1\fi
		\if\final@kern1 \kern-\dimen@\fi
		\else
		\overline{\rel@kern{-0.6}\kern\dimen@#1}%
		\fi
	}%
	\macc@depth\@ne
	\let\math@bgroup\@empty \let\math@egroup\macc@set@skewchar
	\mathsurround\z@ \frozen@everymath{\mathgroup\macc@group\relax}%
	\macc@set@skewchar\relax
	\let\mathaccentV\macc@nested@a
	%The following initialises \macc@kerna and calls \mathaccent:
	\if#31
	\macc@nested@a\relax111{#1}%
	\else
	%If the argument consists of more than one symbol, and if the first token is
	%a letter, use that letter for the computations:
	\def\gobble@till@marker##1\endmarker{}%
	\futurelet\first@char\gobble@till@marker#1\endmarker
	\ifcat\noexpand\first@char A\else
	\def\first@char{}%
	\fi
	\macc@nested@a\relax111{\first@char}%
	\fi
	\endgroup
}
\newcommand{\oGamma}{\widebar\Gamma}
\newcommand{\oOmega}{\widebar\Omega}
\newcommand{\otau}{\widebar\tau}
\newcommand{\oB}{\widebar B}
\newcommand{\oM}{\widebar M}
\newcommand{\oR}{\widebar R}
\newcommand{\oS}{\widebar S}
\newcommand{\oU}{\widebar U}
\newcommand{\oc}{\widebar c}
\newcommand{\oh}{\widebar h}
\newcommand{\hh}{\widehat h}
\newcommand{\hL}{\widehat L}
\newcommand{\hbeta}{\widehat\beta}
\newcommand{\hGamma}{\widehat\Gamma}
\newcommand{\tA}{\widetilde A}
\newcommand{\tB}{\widetilde B}
\newcommand{\tD}{\widetilde D}
\newcommand{\tL}{\widetilde L}
\newcommand{\tU}{\widetilde U}
\newcommand{\tY}{\widetilde Y}
\newcommand{\tbeta}{\widetilde{\beta}}
\newcommand{\tDelta}{\widetilde{\Delta}}
\newcommand{\ttau}{\widetilde{\tau}}
\newcommand{\sm}{\mathrm{sm}}
\newcommand{\bcM}{\widebar{\mathcal M}}
\newcommand{\Gmk}{\mathbb G_{\mathrm m/k}}
\newcommand{\GmC}{\mathbb G_{\mathrm m/\mathbb C}}
\newcommand{\llp}{(\!(}
\newcommand{\rrp}{)\!)}
\newcommand{\an}{^\mathrm{an}}
\newcommand{\ev}{\mathrm{ev}}
\newcommand{\inv}{^{-1}}
\newcommand{\id}{\mathrm{id}}
\newcommand{\kanal}{$k$-analytic\xspace}
\newcommand{\Zaffine}{$\mathbb Z$-affine\xspace}
\newcommand{\trop}{^\mathrm{trop}}
\newcommand{\DM}{Deligne-Mumford\xspace}
\providecommand{\abs}[1]{\lvert#1\rvert}
\providecommand{\norm}[1]{\lVert#1\rVert}
\tikzset{
  closed/.style = {decoration = {markings, mark = at position 0.5 with { \node[transform shape, xscale = .8, yscale=.4] {/}; } }, postaction = {decorate} },
  open/.style = {decoration = {markings, mark = at position 0.5 with { \node[transform shape, scale = .7] {$\circ$}; } }, postaction = {decorate} }
}
\DeclareMathOperator{\Hom}{Hom}
\DeclareMathOperator{\NE}{NE}
\DeclareMathOperator{\Pic}{Pic}
\DeclareMathOperator{\rank}{rank}
\DeclareMathOperator{\Spec}{Spec}
\DeclareMathOperator{\st}{st}
\DeclareMathOperator{\val}{val}
\begin{document}
\title[Enumeration of holomorphic cylinders. II.]{Enumeration of holomorphic cylinders\\ in log Calabi-Yau surfaces. II.\\ Positivity, integrality and the gluing formula}
\author{Tony Yue YU}
\address{Tony Yue YU, Laboratoire de Mathématiques d'Orsay, Université Paris-Sud, CNRS, Université Paris-Saclay, 91405 Orsay, France}
\email{yuyuetony@gmail.com}
\date{August 3, 2016 (Revised on December 28, 2019)}
\subjclass[2010]{Primary 14N35; Secondary 53D37 14T05 14G22 14J32}
\keywords{Cylinder, enumerative geometry, non-archimedean geometry, Berkovich space, Gromov-Witten, Calabi-Yau}

\begin{abstract}
We prove three fundamental properties of counting holomorphic cylinders in log Calabi-Yau surfaces: positivity, integrality and the gluing formula.
Positivity and integrality assert that the numbers of cylinders, defined via virtual techniques, are in fact nonnegative integers.
The gluing formula roughly says that cylinders can be glued together to form longer cylinders, and the number of longer cylinders equals the product of the numbers of shorter cylinders.
Our approach uses Berkovich geometry, tropical geometry, deformation theory and the ideas in the proof of associativity relations of Gromov-Witten invariants by Maxim Kontsevich.
These three properties provide an evidence for a conjectural relation between counting cylinders and the broken lines of Gross-Hacking-Keel.
\end{abstract}

\maketitle

\tableofcontents

\section{Introduction}

The motivation of this work comes from the study of mirror symmetry.
Mirror symmetry is a conjectural duality between Calabi-Yau manifolds.
A particular type of torus fibration, called \emph{SYZ fibration}, plays an important role in the subject (see \cite{SYZ_1996}).
The enumerative geometry of an SYZ fibration associated to a Calabi-Yau manifold is intimately related to the construction of its mirror manifold.
More precisely, we are interested in counting open holomorphic curves with boundaries on the torus fibers of SYZ fibrations.
Remarkable progress was made in this direction, notably by Fukaya, Oh, Ohta, Ono and many others (see \cite{Fukaya_Lagrangian_intersection_2009,Fukaya_Lagrangian_Floer_2012}).
However, there is a fundamental difficulty in the story:
One observes that in general, the numbers from open curve counting are not invariants.
They depend on various choices.

In order to overcome this difficulty, in \cite{Yu_Enumeration_cylinders_2015}, we explored a new approach using non-archimedean geometry.
The counting of holomorphic cylinders was obtained for log Calabi-Yau surfaces.
The key idea was to relate the counting of cylinders to the counting of particular types of rational curves.
Since the approach was rather indirect, and went through the mysterious non-archimedean geometry,
it is a priori unclear whether we have obtained the \emph{right} invariants.

In this paper, we prove three fundamental properties of these invariants, indicating that these invariants really reflect open curve counting.
The three properties are positivity, integrality and the gluing formula.
Positivity and integrality assert that the numbers of cylinders, defined via virtual techniques, are in fact nonnegative integers.
The gluing formula roughly says that cylinders can be glued together to form longer cylinders, and the number of longer cylinders equals the product of the numbers of shorter cylinders.
In fact, we count cylinders with boundaries on fibers of (non-archimedean) SYZ torus fibrations.
Given three torus fibers $F_1, F_2, F_3$, we are interested in counting cylinders going from $F_1$ to $F_2$, going from $F_2$ to $F_3$, and going from $F_1$ to $F_3$.
The gluing formula claims that the number of cylinders going from $F_1$ to $F_3$ passing through $F_2$ is equal to the product of the number of cylinders going from $F_1$ to $F_2$ and the number of cylinders going from $F_2$ to $F_3$.

\bigskip
Let us now give the precise statements of our theorems.

Let $(Y,D)$ be a Looijenga pair, i.e.\ a connected smooth complex projective surface $Y$ together with a singular nodal curve $D$ representing the anti-canonical class $-K_Y$.
Let $X\coloneqq Y\setminus D$.
Assume that the pair $(Y,D)$ is positive\footnote{This terminology was introduced in \cite{Gross_Mirror_Log_published}.}, in the sense that the intersection matrix $(D_i\cdot D_j)$ of the components of $D$ is not negative semi-definite, a particular case of which is when $X$ is affine.
Let $k\coloneqq\C\llp t\rrp$ be the field of formal Laurent series.
Let $X_k\coloneqq X\otimes_\C k$, and $X\an_k$ the analytification in the sense of Berkovich (see \cite{Berkovich_Spectral_1990}).
We have a non-archimedean SYZ fibration $\tau\colon X\an_k\to B$, where $B$ is a singular \Zaffine manifold homeomorphic to $\R^2$.

A \emph{spine} $L$ in $B$ is a triple $[\Gamma,(v_1,v_2),h\colon\Gamma\to B]$ where $\Gamma$ is a chain of segments, $(v_1,v_2)$ are the two endpoints of the chain, and $h$ is a piecewise linear map.
Here we omit some technical conditions on $L$ (see \cref{def:spine}).
%We restrict to extendable spines only.
Given a spine $L$ in $B$ and a curve class $\beta\in\NE(Y)$, we constructed in \cite{Yu_Enumeration_cylinders_2015} the number of holomorphic cylinders $N(L,\beta)$ associated to $L$ and $\beta$.

\begin{thm}[Positivity and integrality, see \cref{thm:degree}]\label{thm:positivity_integrality}
	The number $N(L,\beta)$ of holomorphic cylinders associated to the spine $L$ and the curve class $\beta$ is a nonnegative integer.
\end{thm}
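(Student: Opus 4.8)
Recall from \cite{Yu_Enumeration_cylinders_2015} that $N(L,\beta)$ is defined through virtual techniques: it is the degree of a dimension-zero virtual fundamental class on a moduli space $\cM(L,\beta)$ of analytic cylinders in $X\an_k$ whose boundary lies on the fibres of $\tau$ prescribed by the spine $L$ and whose class is $\beta\in\NE(Y)$; equivalently, via the correspondence of \cite{Yu_Enumeration_cylinders_2015}, the degree of the virtual class on a moduli space of genus-zero stable maps to a log modification of $Y_k$ meeting $D_k$ with the contact and incidence data encoded by $L$. The goal is to exhibit $[\cM(L,\beta)]^{\mathrm{vir}}$ as the class of a $0$-dimensional scheme carrying positive integer multiplicities, so that its degree is automatically a nonnegative integer.

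The first step is a tropical decomposition. Post-composing a family of such curves with $\tau$ produces a tropical curve in $B$, whose combinatorial type --- \emph{a priori} one of infinitely many, but, once the ends imposed by $L$ and the class $\beta$ are fixed, one of only finitely many --- cuts $\cM(L,\beta)$ into open-and-closed pieces $\cM_h$ in the appropriate formal (non-archimedean) neighbourhood. Hence $N(L,\beta)=\sum_h\deg[\cM_h]^{\mathrm{vir}}$ is a finite sum, and it suffices to analyse a single combinatorial type $h$ at a time.

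For a fixed $h$ one constructs an explicit local model. Near $D$ --- and in particular near the node of $D$, which lies over the unique singular point of the \Zaffine structure on $B$ --- the curves parametrised by $\cM_h$ degenerate inside a toric chart, carry no automorphisms, and have deformation theory governed by the cohomology of the normal sheaf twisted down by the contact divisor. Since $D\in|{-}K_Y|$, the log Calabi-Yau condition forces the relevant obstruction group to vanish, so $\cM_h$ is a finite scheme and $[\cM_h]^{\mathrm{vir}}$ is its ordinary fundamental class; its degree is then the sum, over the finitely many points, of the length of the Artinian local ring there --- a nonnegative integer, strictly positive whenever $\cM_h\neq\varnothing$. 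Equivalently and more explicitly, that degree factors as a product over the vertices of $h$ of lattice indices --- the vertex multiplicities familiar from the tropical correspondence theorems of Mikhalkin, Nishinou--Siebert and Gross--Pandharipande--Siebert --- each a positive integer.

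Summing over the finitely many compatible types shows that $N(L,\beta)$ is a finite sum of nonnegative integers, hence a nonnegative integer. The main obstacle I anticipate is precisely the local analysis at the node of $D$ sitting over the singular point of $B$: one must verify that the contact and incidence conditions are imposed transversally there, that no residual obstruction produces a rational correction, and that the multiplicity extracted is the genuine integral lattice index. This is where the Berkovich-geometric description of the skeleton of $X\an_k$ and the tropical combinatorics do the essential work; the remainder of the argument is bookkeeping over combinatorial types.
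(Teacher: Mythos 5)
Your opening step (cutting the moduli space into open-and-closed pieces indexed by tropical data and reducing to one piece) is in the spirit of what the paper actually does via \cref{prop:tropical_rigidity} and \cref{prop:properness}, but the heart of your argument has a genuine gap. You assert that, for a fixed combinatorial type, the curves ``carry no automorphisms'' and that ``the log Calabi-Yau condition forces the relevant obstruction group to vanish,'' so that the virtual class is the fundamental class of a finite scheme. Neither claim is true for an arbitrary Looijenga pair $(Y,D)$: if $Y$ contains interior curves disjoint from $D$ (e.g.\ $(-2)$-curves), stable maps in the relevant moduli space can bubble off components multiply covering such curves (these are exactly the components that \cref{lem:components_of_the_domain_curve}(\ref{item:components:unstable}) locates over the origin $O\in B$), the moduli space then has excess dimension and nontrivial obstructions, and $D\in|{-}K_Y|$ alone gives you nothing — the vanishing $H^1(C,f^*T_{Y'}(-\log\partial Y'))=0$ used in the paper comes from semi-positivity of a bundle on an \emph{irreducible} $\mathbb P^1$ together with generic global generation at the constrained point (\cref{prop:smoothness}), not from the anticanonical condition. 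The paper removes this obstacle by a step you do not have: deformation invariance of the cylinder counts plus the period-map argument of \cref{prop:deformation}, which lets one assume $Y$ contains no proper curve disjoint from $D$; only then do \cref{lem:components_of_the_domain_curve} and \cref{lem:stable_domain} force the domain curves to be irreducible, after which the evaluation map is finite étale over a nonempty open subset (\cref{prop:finite_etale}) and $N(L,\beta)$ is its degree (\cref{thm:degree}). Without the deformation step, your claim that each piece contributes a sum of lengths of Artinian local rings is unjustified, and the degree could a priori be a genuinely virtual (hence possibly negative or non-integral) number.

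A secondary unsupported claim is the ``equivalently and more explicitly'' assertion that the contribution of each type factors as a product of lattice-index vertex multiplicities à la Mikhalkin, Nishinou--Siebert or Gross--Pandharipande--Siebert. No such correspondence theorem is available here (the affine structure on $B$ is singular and the relevant tropical curves are constrained cylinders, not full tropical counts), the paper neither proves nor uses one, and the positivity/integrality statement does not need it: finiteness and étaleness of the evaluation map over a generic constraint point already give a nonnegative integer. If you want to salvage your outline, replace the ``obstruction vanishes by log CY'' step with the deformation-to-generic-pair argument and the irreducibility analysis of the domain curves, and drop the multiplicity formula.
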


In order to state the gluing formula, we start with two spines
\begin{align*}
& L^1 = [\Gamma^1, (v^1_1, v^1_2),\ h^1\colon\Gamma^1\to B],\\
& L^2 = [\Gamma^2, (v^2_1, v^2_2),\ h^2\colon\Gamma^2\to B].
\end{align*}
Let $e^1_2$ denote the edge of $\Gamma^1$ connected to the vertex $v^1_2$;
let $e^2_1$ denote the edge of $\Gamma^2$ connected to the vertex $v^2_1$.
Assume $h^1(v^1_2)=h^2(v^2_1)$ and $w_{v^1_2}(e^1_2)+w_{v^2_1}(e^2_1)=0$,
where $w_\cdot(\cdot)$ denotes the weight vector, i.e.\ derivative along the edge.
So we can glue $L^1$ and $L^2$ at the vertices $v^1_2$ and $v^2_1$, and form a new spine $L^3$ (see \cref{fig:L3}).
Let $\beta^3\in\NE(Y)$ be a curve class.

\begin{figure}[!ht]
	\centering
	\setlength{\unitlength}{0.5\textwidth}
	\begin{picture} (1,0.32)
	\put(0,0){\includegraphics[width=\unitlength]{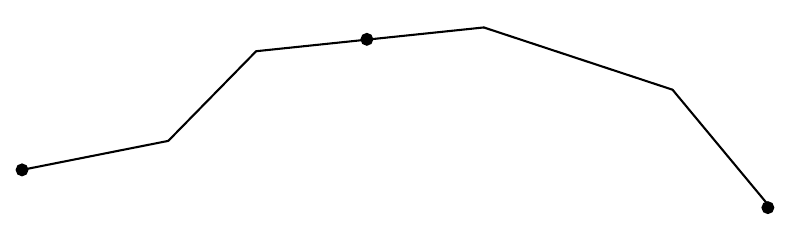}}
	\put(0.0,0.12){$v^1_1$}
	\put(0.44,0.275){$v^1_2$}	
	\put(0.36,0.262){$e^1_2$}	
	\put(0.44,0.18){$v^2_1$}	
	\put(0.52,0.278){$e^2_1$}	
	\put(0.99,0.01){$v^2_2$}
	\put(0.2,0.18){$L^1$}
	\put(0.72,0.24){$L^2$}
	\put(0.5,0.11){$L^3$}
	\end{picture}
	\caption{The spines $L^1$, $L^2$ and $L^3$.}
	\label{fig:L3}
\end{figure}

\begin{thm}[Gluing formula]\label{thm:gluing_formula}
	We have the following gluing formula:
	\[ \sum_{\beta^1+\beta^2=\beta^3} N(L^1,\beta^1)\cdot N(L^2,\beta^2) = N(L^3,\beta^3) .\]
\end{thm}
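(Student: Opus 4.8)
The plan is to follow the mechanism of Kontsevich's proof of the associativity relations: exhibit both sides of the gluing formula as virtual counts on a single moduli space that fibres over a one-parameter base, and transport the count from a generic fibre, where it computes $N(L^3,\beta^3)$, to a degenerate fibre, where the cylinders break at the fibre $F_2\coloneqq\tau^{-1}(p)$ over the gluing point $p\coloneqq h^1(v^1_2)=h^2(v^2_1)$ and the count becomes the product $\sum_{\beta^1+\beta^2=\beta^3}N(L^1,\beta^1)N(L^2,\beta^2)$. Recall from \cite{Yu_Enumeration_cylinders_2015} that $N(L,\beta)$ is the degree of the virtual fundamental class of a proper moduli space $\cM(L,\beta)$ of holomorphic cylinders of combinatorial type $L$ and class $\beta$ --- equivalently, after the reduction of loc.\ cit., of a space of stable genus-$0$ maps into a modification of $Y_k$ with tangency conditions along the boundary prescribed by $L$ --- of virtual dimension zero; by \cref{thm:positivity_integrality} this degree is moreover an honest nonnegative integer. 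The geometric input driving everything is the elementary remark that a cylinder of type $L^3$, restricted to the tube over the germ of $h^3(\Gamma^3)$ at $p$, is cut by the circle lying over $p$ into a cylinder of type $L^1$ and a cylinder of type $L^2$; conversely two such cylinders whose boundary circles on the torus fibre $F_2$ are glued produce a cylinder of type $L^3$. The condition $w_{v^1_2}(e^1_2)+w_{v^2_1}(e^2_1)=0$ is precisely what makes this gluing admissible and, as we will see, transverse.

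\textbf{The steps.} First I would construct a master moduli space $\widetilde{\cM}$, proper over a one-parameter base $S$ --- in the style of Kontsevich's argument, $S$ is a moduli line along which the node produced by the breaking over $F_2$ gets smoothed --- such that the generic fibre of $\widetilde{\cM}\to S$ is $\cM(L^3,\beta^3)$, while the fibre over the distinguished point $0\in S$ parametrises the broken configurations and splits as
\[ \widetilde{\cM}_0 \;\cong\; \coprod_{\beta^1+\beta^2=\beta^3}\cM(L^1,\beta^1)\times_{F_2}\cM(L^2,\beta^2), \]
the fibre product being taken along the evaluation maps to $F_2$ that record the positions of the matching boundary circles. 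Second, I would check that $\widetilde{\cM}\to S$ is proper: by the tropical compactness arguments of \cite{Yu_Enumeration_cylinders_2015} applied in the family, any limit of type-$L^3$ cylinders has tropicalisation forced to be the broken spine $L^1\cup L^2$, so no component can escape into the boundary $D$ and no unexpected bubbling occurs. Third, I would verify the compatibility of the obstruction theories: the one on $\widetilde{\cM}$ restricts on $\widetilde{\cM}_0$ to the fibre product over the diagonal $F_2\hookrightarrow F_2\times F_2$ of those on the two factors --- this is the usual deformation theory of a curve glued at a node, i.e.\ the splitting axiom. With these in hand, deformation invariance of the virtual degree gives $N(L^3,\beta^3)=\deg[\widetilde{\cM}_s]^{\mathrm{vir}}=\deg[\widetilde{\cM}_0]^{\mathrm{vir}}$ for $s$ generic.

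\textbf{Conclusion and main obstacle.} It remains to evaluate $\deg[\widetilde{\cM}_0]^{\mathrm{vir}}$. By the splitting above and compatibility of virtual classes with the diagonal Gysin map,
\[ [\widetilde{\cM}_0]^{\mathrm{vir}} = \sum_{\beta^1+\beta^2=\beta^3}\Delta_{F_2}^!\big([\cM(L^1,\beta^1)]^{\mathrm{vir}}\boxtimes[\cM(L^2,\beta^2)]^{\mathrm{vir}}\big), \]
and one concludes by the analogue of Kontsevich's dimension argument: the torus fibre $F_2$ carries a natural torus action under which a cylinder is carried to another cylinder of the same type, so the relevant evaluation maps push the virtual classes of the two factors to translation-invariant classes of the expected codimension on $F_2$; the diagonal pairing then collapses and $\deg\Delta_{F_2}^!\big(\alpha\boxtimes\beta\big)=\deg\alpha\cdot\deg\beta$, which is the desired product formula. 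I expect two genuine difficulties. The first is properness of $\widetilde{\cM}\to S$: one needs a tropical-compactness statement strong enough to rule out every limiting configuration except the expected broken cylinders, and one must check that the weight balancing $w_{v^1_2}(e^1_2)+w_{v^2_1}(e^2_1)=0$ forces the breaking to be transverse, so that each broken cylinder is counted with multiplicity one in the identification of $\widetilde{\cM}_0$. The second --- and the more serious --- is that the virtual and deformation-theoretic formalism is not directly available for Berkovich analytic spaces; the degeneration and the splitting axiom must instead be run through the algebraic avatar of \cite{Yu_Enumeration_cylinders_2015}, and one must verify that ``cutting the cylinder along $F_2$'' corresponds there to a node-smoothing degeneration of the associated rational curves with tangency conditions, so that the combinatorics of the spines and of the relative conditions match across the two pictures.
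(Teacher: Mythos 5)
Your proposal has the right flavor (a Kontsevich-style degeneration-and-compare argument), but its central object is never constructed, and as stated it begs the question. You posit a proper family $\widetilde{\cM}\to S$ whose generic fibre is $\cM(L^3,\beta^3)$ and whose special fibre is $\coprod\cM(L^1,\beta^1)\times_{F_2}\cM(L^2,\beta^2)$; the existence of such an interpolation is essentially the content of the theorem, and there is no reason a priori why cylinders counted by $N(L^3,\beta^3)$ (which carry a point constraint at one \emph{end} of the spine) should break precisely over the torus fibre $F_2$ above the gluing vertex. The paper's actual mechanism is different: one works with a single moduli space of ``double cylinders'' --- $5$-pointed rational stable maps with four tangency conditions coming from the extensions of \emph{both} $L^1$ and $L^2$, and with the fifth marked point constrained to lie over the gluing point --- which is finite étale over (a subset of) $\tDelta\times W^d$, where $\tDelta$ sits over the path $\Delta\subset\oM^{\mathrm{trop}}_{0,5}$ of \cref{const:Delta}. \emph{Both} sides of the formula then appear at the two boundary strata $m,m'$ of $\bcM_{0,5}$: at $m'$ the configuration splits into an $L^1$- and an $L^2$-cylinder, while at $m$ it splits into an $L^3$-cylinder and an auxiliary \emph{straight} cylinder $L^4$, and the equality of the two degrees is simply finite étaleness over a connected base (\cref{prop:finite_etale}) --- no virtual classes, obstruction-theory compatibilities, or splitting axiom are needed, which matters because (as you yourself note) that formalism is not available in the Berkovich setting. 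Your proposal has no counterpart of the $L^4$ ingredient (\cref{prop:glue_annuli}, \cref{prop:straight_spine}), which is exactly where the hypothesis $w_{v^1_2}(e^1_2)+w_{v^2_1}(e^2_1)=0$ does its real work: it makes $\hL^4$ straight, forcing $N(L^4,\beta^4)=1$.

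The second genuine gap is your evaluation of the broken fibre. You identify it as a fibre product over $F_2$ and collapse the diagonal Gysin pairing by invoking a ``natural torus action'' on $F_2$ carrying cylinders to cylinders; no such global action exists on a general log Calabi--Yau surface (the SYZ fibration is only locally an affinoid torus fibration, and a local translation does not act on cylinders that leave the chart). In the paper this issue never arises: by \cref{prop:point} the component carrying the fifth marked point is contracted to the fixed point $\widebar w'$, so the broken locus is a product of two \emph{point-constrained} moduli spaces (a product over a point, not a fibre product over $F_2$), and its degree is literally $\sum N(L^1,\beta^1)N(L^2,\beta^2)$ once one knows that the cylinder counts do not depend on where along the spine the point constraint sits --- the symmetry property recalled in \cref{rem:symmetry}, which your argument also implicitly needs (your $N(L^i,\beta^i)$ are defined with the constraint at an endpoint, not at the gluing vertex) but never supplies. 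To repair your route you would have to either construct the family $\widetilde{\cM}\to S$ (in effect a target or domain degeneration formula in the non-archimedean category, which is a major undertaking) or switch to the paper's domain-side degeneration through $\bcM_{0,5}$, and in either case replace the torus-action step by finite-étaleness plus the symmetry property.
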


We use a combination of techniques from tropical geometry, Berkovich geometry, deformation theory and Gromov-Witten theory.
The proof of the positivity and integrality is based on three main ideas: First, the cylinder counts are invariant under deformations of the Looijenga pair $(Y,D)$; Second, for generic $(Y,D)$ in the deformation space, no bubbling can occur for our holomorphic cylinders in question; Third, by the deformation theory of rational curves in surfaces (see \cite{Keel_Rational_curves_1999}), we deduce that the moduli space responsible for our counting is generically smooth and of expected dimension.

Now we sketch the proof of the gluing formula.
We extend both $L^1$ and $L^2$ to infinity straight with respect to the \Zaffine structure, and glue them together to form a ``double extended spine'' $L^\rd$ as in \cref{const:extended_spines} \cref{fig:Ld}.
Then we consider the moduli stack of 5-pointed rational non-archimedean analytic stable maps associated to $L^\rd$.
The moduli space of 5-pointed rational curves has two special 0-strata corresponding to $G$ and $G'$ in \cref{const:graphs} \cref{fig:graphs}.
Then using a smoothness argument as in the proof of positivity and integrality, we prove that the counts of 5-pointed rational stable maps associated to $G$ and $G'$ are in fact equal.
Finally we explain in \cref{sec:gluing} that the two counts give exactly the two sides of the gluing formula.

In order to make the idea above precise, we need to construct carefully various moduli spaces of tropical curves and the corresponding moduli spaces of analytic curves, and study their properties, which constitutes the bulk of \cref{sec:tropical_constructions,sec:analytic_constructions}.
We will rely on the work of Abramovich-Caporaso-Payne \cite{Abramovich_Tropicalization_2012} and the author's previous work \cite{Yu_Tropicalization_2014}.
Our proof of the gluing formula via different degenerations of domain curves can be seen as a refinement of the proof of the associativity relations of Gromov-Witten invariants (see \cite{Kontsevich_Gromov-Witten_1994,Gathmann_Kontsevich_formula_2008}).
Comparing with the algebraic case, we need to take extra care of the properness of various moduli spaces in the proofs.

The results of this paper give us a deeper understanding of open curve counting of log Calabi-Yau surfaces.
They provide evidence that our cylinder counts obtained via Berkovich geometry satisfy expected properties and reflect honest open curve enumeration.
The counting of cylinders is intimately related to the combinatorial notion of broken line in the work of Gross-Hacking-Keel \cite{Gross_Mirror_Log_published}\footnote{The notion of broken line in general was developed by Gross, Hacking, Keel, Siebert and their coauthors in a series of papers \cite{Gross_Mirror_symmetry_for_P2_2010,Carl_A_tropical_view_2010,Gross_Mirror_Log_published,Gross_Theta_2012,Gross_Theta_2015}. It was also suggested by Abouzaid, Kontsevich and Soibelman in different occasions}.
The positivity, integrality and the gluing formula proved in this paper suggest a precise conjectural relation with broken lines which we state below:

%Assume our Looijenga pair $(Y,D)$ is positive in the sense of \cite[Definition 2.16]{Gross_Mirror_Log_published}.
Let $\gamma$ be a broken line in $B$ for the canonical scattering diagram in the sense of \cite[\S 2.3]{Gross_Mirror_Log_published}.
We follow the notations in loc.\ cit..
Let $\alpha$ be any negative number such that $(-\infty,\alpha]$ is a proper subset of the unique unbounded domain of linearity of $\gamma$.
Then the restriction of $\gamma$ to the interval $[\alpha,0]$ gives rise to a spine in $B$, which we denote by $L_\gamma$.
Let $c\cdot z^q$ denote the monomial attached to the last domain of linearity of $\gamma$.
Write $q=\beta+\lambda$ for $\beta\in\NE(Y)$ and $\lambda\in\Lambda_{\gamma(0)}$ (see \cite[Construction 2.2 and Example 2.3]{Gross_Mirror_Log_published}).

\begin{conj}
	The number $N(L_\gamma,\beta)$ of holomorphic cylinders associated to the spine $L_\gamma$ and the curve class $\beta$ is equal to the number $c$.
\end{conj}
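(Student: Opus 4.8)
The plan is to derive the conjecture from the two theorems of this paper, supplemented by a local analysis of the broken line $\gamma$ at each of its bends, matched against the wall functions of the canonical scattering diagram of \cite{Gross_Mirror_Log_published}. The gluing formula (\cref{thm:gluing_formula}) encodes exactly the multiplicative behaviour that the coefficient of a broken line enjoys as the line crosses successive walls, so the argument should reduce to the case of a single wall-crossing; there it becomes the comparison between the cylinder counts constructed in \cite{Yu_Enumeration_cylinders_2015} and the genus-zero relative Gromov--Witten invariants of $(Y,D)$ --- counts of rational curves meeting $D$ at a single point with maximal tangency --- that define the canonical scattering diagram.

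First I would set up the dictionary between $\gamma|_{[\alpha,0]}$ and the spine $L_\gamma$: the domains of linearity of $\gamma$ correspond to the edges of $L_\gamma$ and the points where $\gamma$ bends to the interior vertices, the tropical weight vectors being determined by the exponents of the monomials attached to the domains of linearity. If $\gamma$ bends at $\alpha<b_1<\dots<b_m<0$, choose cut points $c_i\in(b_i,b_{i+1})$ interior to a domain of linearity and set $\gamma_i\coloneqq\gamma|_{[c_{i-1},c_i]}$, with $c_0=\alpha$ and $c_m=0$, so that each $\gamma_i$ contains exactly one bend. Since $c_i$ is interior to a domain of linearity, $\gamma_i$ and $\gamma_{i+1}$ meet at $\gamma(c_i)$ with opposite tangent directions and opposite tropical weight vectors, so the hypotheses of \cref{thm:gluing_formula} hold; iterating it gives
\[ N(L_\gamma,\beta)\;=\;\sum_{\beta_1+\dots+\beta_m=\beta}\;\prod_{i=1}^{m}N\bigl(L_{\gamma_i},\beta_i\bigr). \]
On the scattering-diagram side the coefficient $c$ of $\gamma$ is, by construction, a product over the bends of the local factors picked up at the wall-crossings, with the curve-class part of the final exponent distributed among the bends in the same fashion. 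This reduces the conjecture to the case $m=1$.

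For the base case I would analyse a spine $L$ consisting of a single segment crossing one wall $\mathfrak{d}$. The wall function $f_{\mathfrak{d}}$ is built, following \cite{Gross_Mirror_Log_published}, from the maximal-tangency rational-curve counts of $(Y,D)$, and the local factor contributed to $c$ is a prescribed coefficient of $f_{\mathfrak{d}}$, or of $\log f_{\mathfrak{d}}$ with the usual multiple-cover and automorphism corrections when the exponent increment is imprimitive. The construction of \cite{Yu_Enumeration_cylinders_2015} already presents $N(L,\beta)$ as a count of essentially the same rational curves, so it remains to match the two expressions with all weights, signs and multiplicities. For this I would pass to a toric degeneration of $(Y,D)$ near the wall, reducing the identity to the local model underlying the tropical vertex of Gross, Pandharipande and Siebert, and use the tropical-to-analytic correspondence of \cite{Abramovich_Tropicalization_2012} and \cite{Yu_Tropicalization_2014} already exploited in the proof of \cref{thm:gluing_formula}, together with the deformation theory of rational curves in surfaces \cite{Keel_Rational_curves_1999} to control the moduli spaces; the nonnegativity and integrality of \cref{thm:positivity_integrality} then serve as a consistency check on both sides.

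The main obstacle is exactly this base-case matching: one must pin down the precise multiplicity with which a single holomorphic cylinder crossing $\mathfrak{d}$ contributes to $N(L,\beta)$ and reconcile it with the recipe extracting coefficients from $f_{\mathfrak{d}}$, especially for imprimitive increments, where higher-order terms of $f_{\mathfrak{d}}$ intervene and the cylinder may wrap the wall several times, with the additional need to control properness of the relevant moduli spaces over $k=\C\llp t\rrp$. A secondary difficulty is the bookkeeping --- checking that each $\gamma_i$ yields a genuine spine in the sense of \cref{def:spine}, that the omitted technical conditions survive the gluing, and that the decomposition $q=\beta+\lambda$ with $\lambda\in\Lambda_{\gamma(0)}$ is compatible with the curve-class distribution in \cref{thm:gluing_formula}. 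A possibly cleaner alternative is to assemble the numbers $N(L,\beta)$ for single-wall spines into a cylinder scattering diagram, use \cref{thm:gluing_formula} to prove it consistent, and identify it with the canonical scattering diagram by uniqueness of the consistent diagram extending given incoming data; the conjecture would then follow formally, but the comparison of incoming data remains the heart of the problem.
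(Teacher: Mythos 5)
There is a genuine gap, and it starts with the status of the statement itself: in the paper this is an open \emph{conjecture}, not a theorem --- no proof is given there (only the remark that it is easy to check in simple cases such as toric $(Y,D)$, and that the integrality of $c$ is otherwise known only by indirect combinatorial arguments). Your proposal does not close it. The reduction step is reasonable and is in fact the heuristic the paper itself offers as evidence: cutting $\gamma$ at points interior to its domains of linearity produces pieces meeting with opposite weight vectors, and iterating \cref{thm:gluing_formula} gives a convolution formula for $N(L_\gamma,\beta)$ over decompositions of the curve class, matching the multiplicative structure of the broken-line coefficient. (Even here some bookkeeping is not automatic: each piece must be an extendable spine in the sense of \cref{def:spine}, in particular Condition (4) at the bend and Condition (3) that no edge lies on a ray through $O$, and one must check that the local factor picked up by $\gamma$ at a bend really is itself of the form ``coefficient of a single-bend broken line'' so that the induction closes.)

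The decisive gap is the base case, which you explicitly leave open: the identification, for a spine with a single bend at a wall $\mathfrak{d}$, of the cylinder count $N(L,\beta)$ --- defined via the virtual class on the moduli space $\cM(\tL)$ of 3-pointed rational stable maps in $\tY\an$ --- with the corresponding coefficient of the wall function $f_{\mathfrak{d}}$ of the canonical scattering diagram, which is assembled from relative Gromov--Witten invariants of $(Y,D)$ with maximal tangency, including multiple-cover contributions for imprimitive exponents. This is not a technical verification that the tools quoted (toric degeneration, the tropical vertex, \cite{Abramovich_Tropicalization_2012}, \cite{Yu_Tropicalization_2014}, \cite{Keel_Rational_curves_1999}) settle as stated; it is essentially the entire content of the conjecture, since once the single-wall identity is known the gluing formula propagates it formally. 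The same remark applies to your alternative route via a ``cylinder scattering diagram'': consistency would follow from \cref{thm:gluing_formula}, but identifying the incoming data with those of the canonical scattering diagram is again the same unproved comparison. So what you have is a plausible reduction of the conjecture to its core case, not a proof; the theorems of this paper (positivity, integrality, gluing) are consistent with the conjecture and motivate it, but they do not by themselves yield the comparison with $c$.
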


Note that it is easy to verify the conjecture in simple cases, such as for toric $(Y,D)$.
Moreover, we would like to mention that the number $c$ is known to be a nonnegative integer via indirect combinatorial arguments (see \cite{Gross_Canonical_bases_2014}).
If the conjecture above holds, then \cref{thm:positivity_integrality} gives a geometric understanding of this fact.

\bigskip
\paragraph{\textbf{Related works}}
Besides the works already cited above, the following works are related to this paper in a broader context:  the works of Kontsevich-Soibelman \cite{Kontsevich_Homological_2001,Kontsevich_Affine_2006}, Auroux \cite{Auroux_Mirror_symmetry_anticanonical_2007,Auroux_Special_Lagrangian_fibrations_2009}, Gross-Siebert \cite{Gross_Real_Affine_2011}, Abouzaid \cite{Abouzaid_Family_2014} and Tu \cite{Tu_On_the_reconstruction_2014} on mirror symmetry, the works of Pandharipande-Solomon \cite{Pandharipande_Disk_enumeration_2008}, Welschinger \cite{Welschinger_Open_GW_2013}, Chan-Lau \cite{Chan_Open_GW_2014}, Fang-Liu \cite{Fang_Open_GW_2013}, Chan \cite{Chan_Open_GW_2012}, Nishinou \cite{Nishinou_Disk_2012}, Lin \cite{Lin_Open_GW_2014}, Brini-Cavalieri-Ross \cite{Brini_Crepant_resolutions_2013} and Ross \cite{Ross_Localization_and_gluing_of_orbifold_amplitudes_2014} on open curve enumeration, and the works of Chen-Satriano \cite{Chen_Chow_quotients_2013} and Ranganathan \cite{Ranganathan_Skeletons_I_2015} on stable maps in logarithmic geometry.
The technique of imposing tropical conditions on the moduli space of algebraic stable maps can also be achieved using logarithmic structures, see the works of Ranganathan-Santos-Parker-Wise \cite{Ranganathan_Moduli_of_stable_maps_in_genus_one_I,Ranganathan_Moduli_of_stable_maps_in_genus_one_II}.

\bigskip
\paragraph{\textbf{Acknowledgements}}
I am very grateful to Maxim Kontsevich for sharing with me many ideas.
I am equally grateful to Vladimir Berkovich, Antoine Chambert-Loir, Mark Gross, Bernd Siebert and Michael Temkin for valuable discussions.
The smoothness argument in \cref{sec:smoothness} I learned from Sean Keel.
I would like to thank him in particular.
This research was partially conducted during the period the author served as a Clay Research Fellow.

\section{Review of the counting of cylinders}\label{sec:review_of_cylinder_counts}

In this section, we review the construction of the counting of holomorphic cylinders in log Calabi-Yau surfaces following \cite{Yu_Enumeration_cylinders_2015}.

We start with a Looijenga pair $(Y,D)$, i.e.\ a connected smooth complex projective surface $Y$ together with a singular nodal curve\footnote{A singular nodal curve is a singular curve with at worst ordinary double point singularities.
	It is not necessarily irreducible.} $D$ representing the anti-canonical class $-K_Y$.
Let $X\coloneqq Y\setminus D$.
It is called a log Calabi-Yau surface.

Note that our initial data are all defined over the complex numbers.
We will now use non-archimedean geometry as a tool in order to extract geometric invariants from $X$.

Let $k\coloneqq\C\llp t\rrp$ be the field of formal Laurent series.
Let $X_k\coloneqq X\otimes_\C k$, and let $X_k\an$ be the analytification in the sense of Berkovich \cite{Berkovich_Spectral_1990}.

We have an essential skeleton $B\subset X_k\an$ and a canonical retraction map $\tau\colon X\an_k\to B$.
As piecewise linear space, $B$ is isomorphic to $\R^2$.
Outside the origin $O\in B$, the map $\tau$ is an affinoid torus fibration\footnote{Recall that a continuous map from a \kanal space to a topological space is called an \emph{affinoid torus fibration} if locally on the target, it is isomorphic to the pullback of the coordinate-wise valuation map $\val^n\colon(\Gmk\an)^n\to\R^n$ along an open subset $U\subset\R^n$.} (see \cite[Proposition 3.6]{Yu_Enumeration_cylinders_2015}, see also \cite{Nicaise_Xu_Yu_The_non-archimedean_SYZ_fibration}).
Our goal is to study the counting of holomorphic cylinders with boundaries on affinoid fiber tori.

Consider a holomorphic cylinder in $X\an_k$, i.e.\ a closed annulus $C$ with two boundary points $v_1, v_2$, and a holomorphic map $f\colon C\to X\an_k$.
Let $\Gamma$ be the skeleton of $C$, i.e.\ the path in $C$ connecting $v_1$ and $v_2$.
Let $h\coloneqq (\tau\circ f)|_{\Gamma}$.
It is a piecewise linear map from $\Gamma$ to $B$.
We call $(\Gamma,(v_1,v_2),h)$ the spine associated to the holomorphic cylinder.

Now fix a spine $L$ (in the sense of \cref{def:spine}) and a curve class $\beta\in\NE(Y)$; we would like to count holomorphic cylinders in $X\an_k$ whose associated spine is $L$ and whose associated curve class is $\beta$.
Unfortunately, the moduli space of such holomorphic cylinders is infinite dimensional.
In order to obtain a finite number, our strategy in \cite{Yu_Enumeration_cylinders_2015} is to restrict to cylinders whose boundaries satisfy an extra regularity condition:
we require that when we make analytic continuation at the boundaries, our cylinders should extend straight to infinity in an appropriate sense.
In this way, we are able to relate the counting of cylinders to the counting of particular types of rational curves.

The way to carry out the idea above is the following:
First, using the \Zaffine structure on $B\setminus O$ induced by the non-archimedean SYZ fibration, we extend the spine $L$ at both ends straight to an infinite spine $\hL$, which we call the associated extended spine (see \cref{const:extension}).

Put $\tB\coloneqq\R\times B$.
The graph of the extended spine $\hL$ gives an extended spine $\tL$ in $\tB$.
We also have an associated curve class $\tbeta$ whose definition we omit here for simplicity (see \cite[\S 5]{Yu_Enumeration_cylinders_2015}).
Let $\tY$ be a toroidal compactification of $\Gmk\times X_k$ which has two boundary divisors $\tD_1, \tD_2$ corresponding to the two infinite directions of $\tL$.
Let $\ttau\colon(\Gmk\times X_k)\an\to \tB$ be the map induced by $\tau\colon X\an_k\to B$.
Fix a point $p$ in $(\Gmk\times X_k)\an$ such that $\ttau(p)\in\tB$ is the point in the image of $\tL$ corresponding to an endpoint of $L$.
Let $\cM$ be the stack of analytic stable maps into $\tY\an$ of class $\tbeta$ with three marked points, such that the first marked point maps to $\tD\an_1$, the second marked point maps to $\tD\an_2$, and the third marked point maps to $p$.
For any stable map $[C,(s_1,s_2,s_3),f\colon C\to\tY\an]$ in $\cM$, let $\Gamma$ be the path in $C$ connecting $s_1$ and $s_2$, and let $h\coloneqq (\ttau\circ f)|_{\Gamma\setminus\{s_1,s_2\}}$.
We call $(\Gamma,(s_1,s_2),h)$ the extended spine in $\tB$ associated to the stable map.

Let $\cM(\tL)$ be the substack of $\cM$ consisting of stable maps whose associated extended spine equals $\tL$.
It is proved in \cite[Theorem 5.4]{Yu_Enumeration_cylinders_2015} that $\cM(\tL)$ is a proper analytic stack.
Moreover, it possesses a virtual fundamental class of dimension zero.
Finally, we define the number of holomorphic cylinders $N(L,\beta)$ associated to the spine $L$ and the curve class $\beta$ to be the degree of this virtual fundamental class.
We have shown in loc.\ cit.\ that the number $N(L,\beta)$ is well-defined, i.e.\ it does not depend on the various choices we have made during the construction, in particular the choice of the point $p$.

Above is a sketch of the construction of the number $N(L,\beta)$.
We remark that although $N(L,\beta)$ is supposed to represent the number of cylinders, its actual construction involves enumeration of rational curves and virtual fundamental classes.
Therefore, the positivity and integrality in \cref{thm:positivity_integrality} and the gluing formula in \cref{thm:gluing_formula} are not at all obvious.
In order to prove these theorems, we will start by a series of constructions in tropical geometry.
This is the subject of the next section.

\section{Tropical constructions}\label{sec:tropical_constructions}

In this section, we will make a series of constructions concerning tropical curves and tropical moduli spaces.
Here is an overview:

Definitions \ref{def:unbounded_tree}, \ref{def:Zaffine_unbounded_tree} and \ref{def:Zaffine_map} set up the most basic notions regarding trees and \Zaffine maps in our context.
Definition \ref{def:spine} introduces the notion of spine, the fundamental combinatorial object associated to holomorphic cylinders.
As explained in \cref{sec:review_of_cylinder_counts}, for the enumeration of holomorphic cylinders, we introduce the notion of extended spine in \cref{def:extended_spine}, and the extension procedure in \cref{const:extension}.
\Cref{const:extended_spines} applies the extension procedure to the various spines in the statement of \cref{thm:gluing_formula}, and fix the notations for later use.

Constructions \ref{const:tB} and \ref{const:CB} concern subdivisions of the tropical bases, which will serve us for toric blowups in \cref{sec:analytic_constructions}.
Here is the idea behind \cref{const:CB}:
In \cref{sec:analytic_constructions}, it will be difficult to work directly with the tropical base $B$ which is unbounded.
So we approximate $B$ by a bounded subset $B_\rf\subset B$.
The conditions in \cref{const:CB} ensure that the approximation is good enough, so that it captures enough information about tropical curves in $B$ that are related to the spines $L^i$.

Constructions \ref{const:graphs} and \ref{const:Delta} define a path $\Delta\subset\oM\trop_{0,5}$ over which we will deform analytic curves.
Constructions \ref{const:subset_Ti} and \ref{const:subset_Td} are based on \cref{const:extended_spines}.
They define the sets of tropical curves in $B$ associated to the given spines.
As explained above, we need to work with an approximation $B_\rf\subset B$ later in \cref{sec:analytic_constructions}, so we make Constructions \ref{const:tropical_moduli} and \ref{const:TiTd} to select out subsets of tropical curves in $B_\rf$ associated to the given spines.
The main property of these subsets is given in \cref{prop:tropical_rigidity}, which paves the way for the properness results in \cref{sec:analytic_constructions}.

\begin{defin}\label{def:unbounded_tree}
	An \emph{unbounded tree} $(\Gamma,V_\infty(\Gamma))$ consists of a finite tree\footnote{Here ``finite'' means finitely many vertices.
	We allow 2-valent vertices.} $\Gamma$ and a subset of 1-valent vertices $V_\infty(\Gamma)$ called \emph{unbounded vertices}.
	Vertices that are not unbounded are called \emph{bounded vertices}.
	We require that there is at least one bounded vertex.
\end{defin}

\begin{defin}\label{def:Zaffine_unbounded_tree}
	An \emph{unbounded \Zaffine tree} $(\Gamma,V_\infty(\Gamma))$ consists of an unbounded tree and
	\begin{enumerate}
		\item for every edge $e$ with two bounded endpoints, a \Zaffine structure\footnote{We refer to \cite[\S 2]{Yu_Enumeration_cylinders_2015} for the definition of \Zaffine structure.} on $e$ which is isomorphic to the interval $[0,\alpha]\subset\R$ for a positive real number $\alpha$,
		\item for every edge $e$ with one unbounded endpoint $v_\infty$, a \Zaffine structure on $e\setminus\{v_\infty\}$ which is isomorphic to the interval $[0,+\infty)\subset\R$.
	\end{enumerate}
	We set $\Gamma^\circ\coloneqq\Gamma\setminus V_\infty(\Gamma)$.
	We will simply say a \emph{\Zaffine tree} when unbounded vertices are not present.
\end{defin}

\begin{rem}
	\cref{def:Zaffine_unbounded_tree} differs slightly from \cite[Definition 4.2]{Yu_Enumeration_cylinders_2015} in the sense that here we require that the tree is connected and contains at least one bounded vertex.
	The other cases are useless for our purpose.
\end{rem}

\begin{rem}
	Note that a one-dimensional \Zaffine structure is equivalent to a metric.
	So an unbounded \Zaffine tree without bounded 1-valent vertices is a rational tropical curve in the sense of \cite[\S 4.1]{Abramovich_Tropicalization_2012}.
	Our unbounded vertices correspond to the legs in loc.\ cit..
\end{rem}

\begin{defin} \label{def:Zaffine_map}
	Let $(\Gamma, V_\infty(\Gamma))$ be an unbounded \Zaffine tree.
	Let $(\Sigma,\Sigma_0)$ be a polyhedral \Zaffine manifold with singularities, i.e., a polyhedral complex $\Sigma$ equipped with a piecewise \Zaffine structure, an open subset $\Sigma_0\subset\Sigma$ which is a manifold without boundary, and a \Zaffine structure on $\Sigma_0$ compatible with the piecewise \Zaffine structure on $\Sigma$ (see \cite[\S 2]{Yu_Enumeration_cylinders_2015}).
	A \Zaffine map $h\colon\Gamma\setminus V_\infty(\Gamma)\to\Sigma$ is a continuous proper map such that every edge maps to a cell in $\Sigma$ compatibly with the \Zaffine structures.
	A \Zaffine immersion is a \Zaffine map that does not contract any edge to a point\footnote{A \Zaffine immersion is not an injective map in general.}.
	Given a \Zaffine map $h\colon\Gamma\setminus V_\infty(\Gamma)\to\Sigma$, a bounded vertex $v\in V(\Gamma)$ mapping to $\Sigma_0$, and an edge $e$ connected to $v$, we denote by $w^0_v(e)$ the primitive integral tangent vector at $v$ pointing to the direction of $e$, and by $w_v(e)$ the image of $w^0_v(e)$ in $T_{h(v)}\Sigma(\Z)$, the integral lattice in the tangent space $T_{h(v)}\Sigma$.
	We call $w_v(e)$ the weight vector of the edge $e$ at $v$.
	The \Zaffine map $h$ is said to be \emph{balanced} at the vertex $v$ if $\sum_{e\ni v} w_v(e)=0\in T_{h(v)}\Sigma(\Z)$, where the sum is taken over all edges of $\Gamma$ connected to $v$.
\end{defin}

\begin{rem}
	Let $B$ be as in \cref{sec:review_of_cylinder_counts}, i.e.\ the base of the non-archimedean SYZ fibration $\tau\colon X\an_k\to B$.
	By construction it is homeomorphic to the dual intersection cone complex of $D$.
	Let $O\in B$ be the origin in $B$.
	The non-archimedean SYZ fibration induces a \Zaffine structure on $B\setminus O$ (see \cite[Remark 3.7]{Yu_Enumeration_cylinders_2015}).
	So the pair $(B, B\setminus O)$ is a polyhedral \Zaffine manifold with singularities.
\end{rem}

\begin{defin}\label{def:spine}
	A \emph{spine} in the tropical base $B$ consists of a \Zaffine tree $\Gamma$, two 1-valent vertices $(v_1,v_2)$ of $\Gamma$, and a \Zaffine immersion $h\colon\Gamma\to B$ satisfying the following conditions:
	\begin{enumerate}
		\item The image of $h$ does not contain the origin $O\in B$.
		\item The vertices $v_1$ and $v_2$ are the only 1-valent vertices of $\Gamma$.
		In particular, $\Gamma$ is homeomorphic to a closed interval.
		\item There is no edge $e$ of $\Gamma$ such that $h(e)$ is contained in a ray starting from $O$.
		\item For every 2-valent vertex $v$, the vector $-\sum_{e\ni v} w_v(e)$ is either zero or points towards the origin $O\in B$, where $w_v(e)$ denotes the weight vector defined in \cref{def:Zaffine_map} (see \cref{fig:spine}).
	\end{enumerate}
\end{defin}

\begin{figure}[!ht]
	\centering
	\setlength{\unitlength}{0.2\textwidth}
	\begin{picture} (1,1)
	\put(0,0){\includegraphics[width=\unitlength]{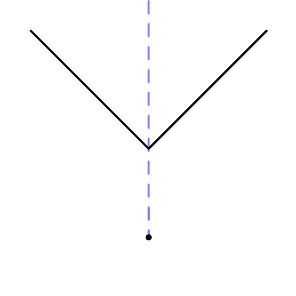}}
	\put(0.56,0.15){$O\in B$}
	\put(0.53,0.42){$v$}
	\put(-0.04,0.85){$v_1$}
	\put(0.94,0.85){$v_2$}
	\end{picture}
	\caption{}
	\label{fig:spine}
\end{figure}

\begin{defin} \label{def:extended_spine}
	An \emph{extended spine} in the tropical base $B$ consists of an unbounded \Zaffine tree $\Gamma$, two unbounded vertices $(v_1,v_2)$, and a \Zaffine immersion $h\colon\Gamma^\circ\to B$ such that the Conditions (1)-(4) of \cref{def:spine} hold.
\end{defin}

\begin{construction} \label{const:extension}
	Given any spine $L=[\Gamma,(v_1,v_2),h]$ in $B$, we can extend both its ends $v_1$ and $v_2$ straight with respect to the \Zaffine structure on $B\setminus O$.
	Since our Looijenga pair $(Y,D)$ is assumed to be positive, by \cite[Corollary 1.6]{Gross_Mirror_Log_2011}, this extension gives an extended spine in $B$, which is called the extended spine associated to $L$, and which we denote by $\hL=[\hGamma,(u_1,u_2),\hh\colon\hGamma^\circ\to B]$.
	This is the only place in the paper where we use the positivity assumption.
	
	Let $B^1$ denote the union of 1-dimensional cones in $B$ viewed as a simplicial cone complex.
	For any $b\in B^1\setminus O$, let $\be_b$ denote the primitive integral vector at $b$ in the direction of $\overrightarrow{Ob}$, and let $D_b$ denote the irreducible component of $D$ corresponding to the ray $\overrightarrow{Ob}$.
	Define
	\[\delta\coloneqq\sum_{x\in\hh\inv(B^1)\setminus\Gamma} \abs{d_x\hh\wedge\be_{\hat h(x)}} D_{\hat h(x)} \in\NE(Y),\]
	where $d_x$ denotes the derivative at $x$, and $\abs{\cdot}$ denotes the lattice length of the wedge product.
	We call $\delta$ the curve class associated to the extension from $L$ to $\hL$ (see \cite[\S 4]{Yu_Enumeration_cylinders_2015}).
\end{construction}

\begin{construction} \label{const:extended_spines}
	Fix a spine $L^0=[\Gamma^0,(v^0_1,v^0_2),h^0\colon\Gamma^0\to B]$ in $B$, which will play the role of an arbitrary spine in the paper.
	Let $L^1$, $L^2$ and $L^3$ be the spines in the statement of \cref{thm:gluing_formula}.
	For $i=0,\dots,3$, let
	\[\hL^i=[\hGamma^i,(u^i_1,u^i_2),\ \hh^i\colon(\hGamma^i)^\circ\to B]\]
	denote the extended spine in $B$ associated to $L^i$.
	
	Let us glue $\hL^1$ and $\hL^2$ along the vertices $v^1_2\in\hGamma^1$ and $v^2_1\in\hGamma^2$ to form an unbounded \Zaffine tree $\Gamma^\rd$ with four unbounded vertices $u^1_1,u^1_2,u^2_1,u^2_2\in\Gamma^\rd$.
		Let us denote the vertex $v^1_2=v^1_1\in \Gamma^\rd$ by $v^\rd$.
	The \Zaffine map $\hh^1\colon(\hGamma^1)^\circ\to B$ and $\hh^2\colon(\hGamma^2)^\circ\to B$ induce a \Zaffine map $h^\rd\colon(\Gamma^\rd)^\circ\to B$ (cf. \cref{fig:Ld}).
	We denote
	\[L^\rd\coloneqq[\Gamma^\rd,(u^1_1,u^1_2,u^2_1,u^2_2,v^\rd),h^\rd].\]
	
	\begin{figure}[!ht]
		\centering
		\setlength{\unitlength}{0.8\textwidth}
		\begin{picture} (1,0.273)
		\put(0,0){\includegraphics[width=\unitlength]{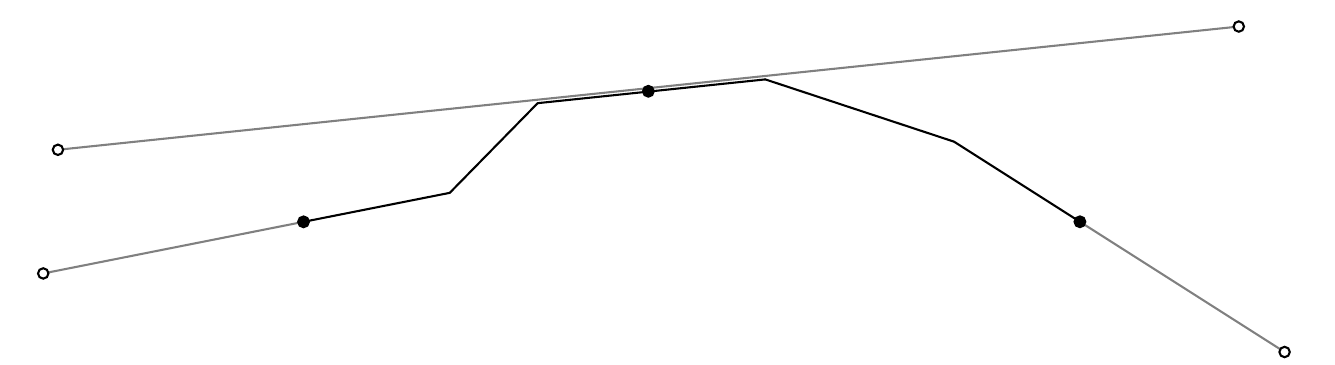}}
		\put(0.02,0.03){$u^1_1$}	
		\put(0.22,0.07){$v^1_1$}	
		\put(0.77,0.07){$v^2_2$}	
		\put(0.03,0.18){$u^2_1$}
		\put(0.94,0.26){$u^1_2$}	
		\put(0.47,0.22){$v^\rd$}	
		\put(0.975,0.00){$u^2_2$}
		\end{picture}
		\caption{}
		\label{fig:Ld}
	\end{figure}
	
	Let $\hGamma^4$ be the path connecting $u^2_1$ to $u^1_2$ in $\Gamma^\rd$ and let $\hh^4\colon(\hGamma^4)^\circ\to B$ the restriction of $h^\rd$.
	Denote $u^4_1\coloneqq u^2_1$, $u^4_2\coloneqq u^1_2$.
	We see that
	\[\hL^4=[\hGamma^4,(u^4_1,u^4_2),\ \hh^4\colon(\hGamma^4)^\circ\to B]\]
	is an extended spine in $B$ that is straight with respect to the \Zaffine structure on $B\setminus O$.
	
	For $i=0,\dots,4$, let $f^i_1$, $f^i_2$ denote the edges of $\hGamma^i$ connected to the unbounded vertices $u^i_1, u^i_2$ respectively.
	Let $u'^i_1$ denote the other endpoint of the edge $f^i_1$, and $u'^i_2$ the other endpoint of the edge $f^i_2$.
	Let $w^i_1\coloneqq w_{u'^i_1}(f^i_1)$ and $w^i_2\coloneqq w_{u'^i_2}(f^i_2)$ denote the weight vectors.	
\end{construction}

\begin{rem}
	Although the superscripts $0,\dots,4$ in the construction above are not very enlightening, they are practical when we apply the same arguments uniformly to all these spines.
\end{rem}

\begin{construction}\label{const:tB}
	Let $\tB\coloneqq\R\times B$.
	We endow the first factor $\R$ with the simplicial cone complex structure $\R=\R_{<0}\cup\{0\}\cup\R_{>0}$.
	Then $\tB$ obtains also the structure of a simplicial cone complex.
	Let $\tB'$ be a finite rational subdivision of $\tB$ into simplicial cones satisfying the following conditions:
	\begin{enumerate}
		\item Every simplicial cone in the subdivision is regular, i.e., the integer points in each simplicial cone can be generated by a subset of a basis of the underlying lattice.
		This condition is achievable by \cite[Chapter I Theorem 11]{Kempf_Toroidal_1973}.
		\item \label{item:tB:ray} For every $i=0,\dots,4$, there are two rays $\widetilde r^i_1$ and $\widetilde r^i_2$ in the subdivision $\tB'$ pointing to the directions of the vectors $(1, w^i_1)$ and $(1, w^i_2)$ respectively.
	\end{enumerate}
\end{construction}

\begin{construction}\label{const:CB}
	Let $C_B\coloneqq\R_{\ge 0}\times B$ and we embed $B$ into $C_B$ by the map $x\mapsto(1,x)$.
	Let $C'_B$ be a finite rational subdivision of $C_B$ into simplicial cones satisfying the following conditions:
	\begin{enumerate}
		\item Every simplicial cone in the subdivision is regular.
		\item Let $B'$ denote the subdivision of $B$ induced by $C'_B$.
		Let $B_\rf$ denote the union of the bounded faces of $B'$.
		The subscript $_\rf$ in the notation is short for ``finite''.
		Let $\partial B_\rf$ denote the boundary of $B_\rf$ viewed as a subset of $B$ and let $B_\rf^\circ$ denote the interior.
		We require that the origin $O$ and the images of the spines $L^0$, $L^1$, $L^2$ are all contained in the interior $B_\rf^\circ$.
		\item \label{item:CB:infinity} For every $i=0,\dots,4$, for every unbounded 2-dimensional cell $\rho$ in the subdivision $B'$ containing the ray $\hh^i(f^i_1\setminus u^i_1)$ (resp.\ $\hh^i(f^i_2\setminus u^i_2)$) except a finite part, we require that $\rho$ has two infinite 1-dimensional boundary faces pointing to the same direction.
		\item \label{item:CB:projection} Let $B'_{\infty}$ denote the conical subdivision of $B$ induced by the directions\footnote{Each infinite 1-dimensional cell in $B'$ has a direction vector which gives a ray in $B$ starting from $O$.} of the infinite 1-dimensional cells in $B'$.
		We require that the projection $\tB\to B$ induces a map of simplicial cone complexes $\tB'\to B'_{\infty}$.
	\end{enumerate}
\end{construction}

\begin{figure}[!ht]
	\centering
	\setlength{\unitlength}{0.7\textwidth}
	\begin{picture} (1,0.4545)
	\put(0,0){\includegraphics[width=\unitlength]{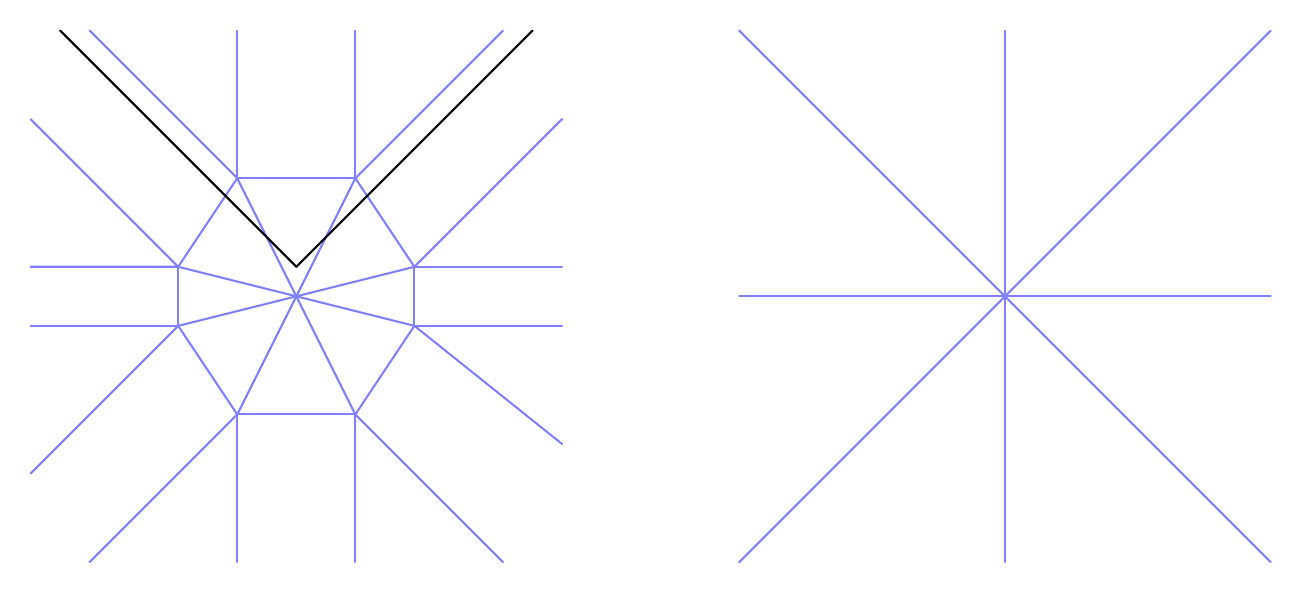}}
	\end{picture}
	\caption{An example of the subdivision $B'$ and the associated $B'_\infty$.}
	\label{fig:subdivision}
\end{figure}

\begin{construction}[see \cref{fig:graphs}]\label{const:graphs}
	We define two graphs with legs:
	\begin{itemize}
		\item The graph $G$ consists of three vertices $v_1, v_2, v_3$, an edge connecting $v_1, v_3$, an edge connecting $v_2, v_3$, two legs $l^2_1, l^1_2$ attached to the vertex $v_1$, two legs $l^1_1, l^2_2$ attached to the vertex $v_2$, and one leg $l_5$ attached to the vertex $v_3$.
		\item The graph $G'$ consists of three vertices $v_1, v_2, v_3$, an edge connecting $v_1, v_3$, an edge connecting $v_2, v_3$, two legs $l^1_1, l^1_2$ attached to the vertex $v_1$, two legs $l^2_1, l^2_2$ attached to the vertex $v_2$, and one leg $l_5$ attached to the vertex $v_3$.
	\end{itemize}
	Note that the only difference between $G$ and $G'$ is the labeling of the legs.
\end{construction}

\begin{figure}[!ht]
	\centering
	\setlength{\unitlength}{0.6\textwidth}
	\begin{picture} (1,0.426)
	\put(0,0){\includegraphics[width=\unitlength]{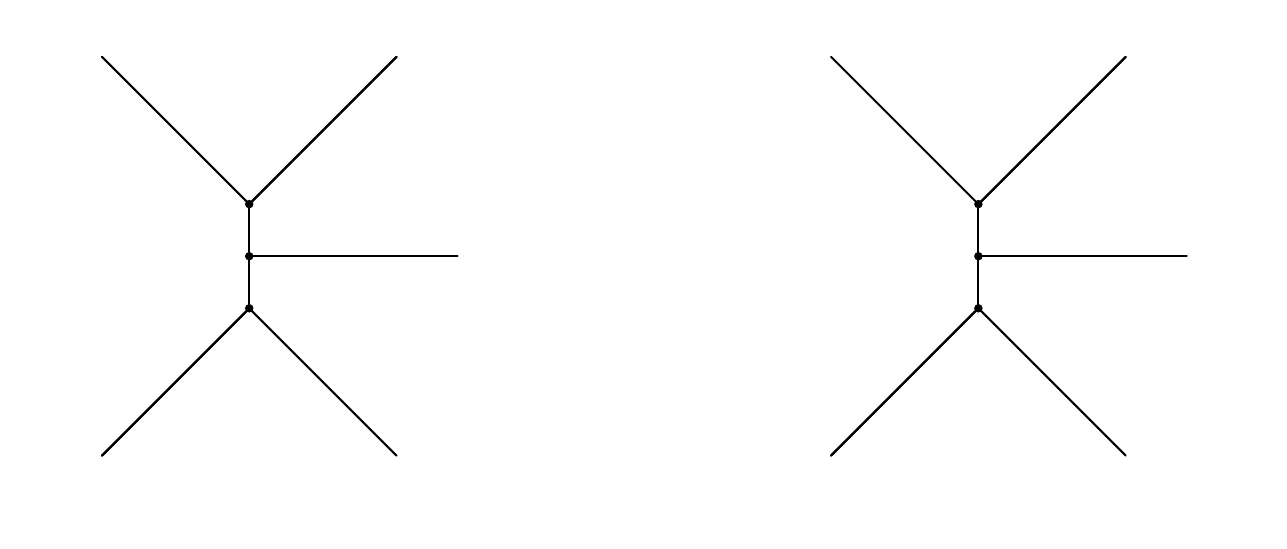}}

	\put(0.05,0.33){$l^2_1$}
	\put(0.05,0.09){$l^1_1$}
	\put(0.3,0.33){$l^1_2$}
	\put(0.3,0.09){$l^2_2$}
	\put(0.62,0.33){$l^1_1$}
	\put(0.62,0.09){$l^2_1$}
	\put(0.87,0.33){$l^1_2$}
	\put(0.87,0.09){$l^2_2$}
		
	\put(0.35,0.24){$l_5$}
	\put(0.92,0.24){$l_5$}
	
	\put(0.18,0.01){$G$}
	\put(0.74,0.01){$G'$}
		
	\end{picture}
	\caption{The graphs $G$ and $G'$.}
	\label{fig:graphs}
\end{figure}

\begin{construction} \label{const:Delta}
Following Abramovich-Caporaso-Payne \cite{Abramovich_Tropicalization_2012}, let $\oM\trop_{g,n}$ denote the coarse moduli space of extended tropical curves of genus $g$ with $n$ legs.
For the extended tropical curves parametrized by $\oM\trop_{0,5}$, we label the five legs by $l^1_1, l^1_2, l^2_1, l^2_2, l_5$ respectively.
For any $r\in(0,+\infty]$, let $G_r$ (resp.\ $G'_r$) denote the extended tropical curve such that
\begin{enumerate}
	\item the underlying graph is $G$ (resp.\ $G'$),
	\item the lengths of both vertical edges (as shown in \cref{fig:graphs}) are equal to $r$.
\end{enumerate}
Note that for $r=0$, we have the degenerate case $G_0=G'_0$.
Now varying $r$ from $0$ to $+\infty$, $G_r$ and $G'_r$ give two paths in $\oM\trop_{0,5}$.
We denote the union of the two paths by $\Delta\subset\oM\trop_{0,5}$.
\end{construction}

\begin{defin}\label{def:pointed_extended_tropical_cylinder}
	A \emph{pointed extended tropical cylinder} in the tropical base $B$ consists of an unbounded \Zaffine tree $\Gamma$ with exactly two unbounded vertices $(v_1,v_2)$, a marked point $v_3\in\Gamma$, and a \Zaffine immersion $h\colon\Gamma^\circ\to B$ satisfying the following condition:
	For every bounded vertex $v$ of $\Gamma$ such that $h(v)\neq O$, the \Zaffine map $h$ is balanced at $v$.
\end{defin}

\begin{defin}\label{def:tropical_double_cylinder}
	A \emph{pointed extended tropical double cylinder} in the tropical base $B$ consists of an unbounded \Zaffine tree $\Gamma$ with exactly four unbounded vertices $v_1,v_2,v_3,v_4$, a marked point $v_5\in\Gamma$, and a \Zaffine immersion $h\colon\Gamma^\circ\to B$ satisfying the same condition as in \cref{def:pointed_extended_tropical_cylinder}.
\end{defin}

\begin{construction}\label{const:subset_Ti}
	For $i=0,\dots,4$, let $T^i$ be the set of pointed extended tropical cylinders $(\Gamma,(v_1,v_2,v_3),h)$ satisfying the following conditions:
	\begin{enumerate}
		\item If $i=0$, then we have $h(v_3)=h^0(v^0_1)$.
		\item If $i=1,\dots,4$, then we have $h(v_3)=h^\rd(v^\rd)$. (Recall that $h^\rd(v^\rd)=h^1(v^1_2)=h^2(v^2_1)$.)
		\item Let $\Gamma^s$ denote the path in $\Gamma$ connecting the unbounded vertices $v_1$ and $v_2$.
		Then $(\Gamma^s,(v_1,v_2,v_3),h|_{\Gamma^s})$ is equal to  $(\hGamma^i,(u^i_1,u^i_2,v^i_1),\hh^i)$, where $v^3_1\coloneqq v^1_2=v^2_1\in\hGamma^3$ and $v^4_1\coloneqq v^\rd\in\hGamma^4$.
	\end{enumerate}
\end{construction}

\begin{construction}\label{const:subset_Td}
	Let $T^\rd$ be the set of pointed extended tropical double cylinders $(\Gamma,(v_1,v_2,v_3,v_4,v_5),h)$ satisfying the following conditions:
	\begin{enumerate}
		\item We have $h(v_5)=h^\rd(v^\rd)\in B$.
		\item Let $\Gamma^s$ denote the convex hull of the vertices $v_1,\dots,v_4$ in $\Gamma$.
		Then $(\Gamma^s,(v_1,\dots,\allowbreak v_5),\allowbreak h|_{\Gamma^s})$ is equal to $(\Gamma^\rd,(u^1_1,u^1_2,u^2_1,u^2_2,v^\rd),h^\rd)$.
	\end{enumerate}
\end{construction}

\begin{defin} \label{def:rational_tropical_curve_in_Bf}
	A \emph{pointed rational tropical curve in $B_\rf$ with $n$ boundary points} consists of a \Zaffine tree $\Gamma$, $(n+1)$ marked points $v_1,\dots,v_{n+1}\in\Gamma$, and a \Zaffine immersion $h\colon\Gamma\to B_\rf$ satisfying the following conditions:
	\begin{enumerate}
		\item \label{item:rational_tropical_curve_in_Bf:boundary1} For $i=1,\dots,n$, $h(v_i)$ lies in the boundary $\partial B_\rf$.
		\item \label{item:rational_tropical_curve_in_Bf:balancing1} For any vertex $v$ of $\Gamma$, if $h(v)$ lies in $B_\rf^\circ\setminus O$, then the \Zaffine map $h$ is balanced at $v$.
		\item \label{item:rational_tropical_curve_in_Bf:balancing2} For any vertex $v$ of $\Gamma$, if $h(v)$ lies in the relative interior of a 1-dimensional cell $\sigma$ of $\partial B_\rf$, let $\rho$ denote the unbounded 2-dimensional cell in $B'$ containing $\sigma$.
		Since the subdivision $B'$ of $B$ is induced by the simplicial conical subdivision $C'_B$ of $C_B$, the two other boundary faces of $\rho$ are two rays pointing to the same direction.
		Let $w_\rho$ denote this direction.
		Consider the quotient $T_{h(v)}B/w_\rho$, where $T_{h(v)}B$ denotes the tangent space of $B$ at $h(v)$.
		We require that $\sum_{e\ni v}w_v(e)$ is zero in $T_{h(v)}B/w_\rho$, where the sum is taken over all edges of $\Gamma$ connected to $v$.
		\item \label{item:rational_tropical_curve_in_Bf:boundary2} If $h\inv(\partial B_\rf)$ is a finite set, then $h\inv(\partial B_\rf)=\bigcup_{i=1}^n v_i$.
	\end{enumerate}
	A pointed rational tropical curve in $B_\rf$ with $n$ boundary points is called \emph{simple} if there is no balanced 2-valent vertex.
	Note that any pointed rational tropical curve in $B_\rf$ with $n$ boundary points can be made simple by removing all the balanced 2-valent vertices and gluing the corresponding edges.
\end{defin}

\begin{construction} \label{const:tropical_moduli}
Let $M_{n+1}(B_\rf)$ denote the space of simple pointed rational tropical curves in $B_\rf$ with $n$ boundary points.
It has a natural Hausdorff topological structure given by deformations of tropical curves (cf. \cite[Theorem 6.1]{Yu_Tropicalization_2014}).

For $i=0,\dots,4$, let $M^i_{2+1}(B_\rf)\subset M_{2+1}(B_\rf)$ denote the subset consisting of pointed rational tropical curves in $B_\rf$ with 2 boundary points $(\Gamma,(v_1,v_2,v_3),h\colon\Gamma\to B_\rf)$ such that $h(v_3)=h^0(v^0_1)$ for $i=0$, and $h(v_3)=h^\rd(v^\rd)$ for $i=1,\dots,4$.

Let $M^\rd_{4+1}(B_\rf)\subset M_{4+1}(B_\rf)$ denote the subset consisting of pointed rational tropical curves in $B_\rf$ with 4 boundary points $(\Gamma,(v_1,\dots,v_5),h\colon\Gamma\to B_\rf)$ satisfying the following conditions:
\begin{enumerate}
	\item We have $h(v_5)=h^\rd(v^\rd)\in B_\rf$.
	\item \label{item:tropical_moduli:intersection} Let $P_{13}$ denote the path in $\Gamma$ connecting $v_1$, $v_3$, and $P_{24}$ the path connecting $v_2$, $v_4$.
	Then the intersection $P_{13}\cap P_{24}$ is nonempty.
	Moreover, the vertex $v_5$ lies in the intersection.
\end{enumerate}
\end{construction}

\begin{construction} \label{const:TiTd}
	In the context of \cref{const:CB}, for every unbounded cell $\rho$ in the subdivision $B'$, we consider the linear projection from $\rho$ to $\rho\cap\partial B_\rf$.
	This gives a natural retraction map from $B$ to $B_\rf$.
	Under the retraction map, pointed extended tropical cylinders and pointed extended tropical double cylinders induce simple pointed rational tropical curves in $B_\rf$.
	So the sets $T^i$, for $i=0,\dots,4$, and $T^\rd$ induce subsets of $M(B_\rf)$, which we denote by $T^i_\rf$ and $T^\rd_\rf$ respectively.
	By construction, $T^i_\rf$ is a subset of $M^i_{2+1}(B_\rf)$, and $T^\rd_\rf$ is a subset of $M^\rd_{4+1}(B_\rf)$.
\end{construction}

\begin{prop}\label{prop:tropical_rigidity}
	For $i=0,\dots,4$, the subset $T^i_\rf\subset M^i_{2+1}(B_\rf)$ is a union of connected components of $M^i_{2+1}(B_\rf)$.
	Similarly, the subset $T^\rd_\rf\subset M^\rd_{4+1}(B_\rf)$ is a union of connected components of $M^\rd_{4+1}(B_\rf)$.
\end{prop}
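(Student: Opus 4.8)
The plan is to prove that $T^i_f$ is both open and closed in $M^i_{2+1}(B_f)$, and likewise that $T^d_f$ is open and closed in $M^d_{4+1}(B_f)$; since a subset of a topological space that is at once open and closed is a union of connected components, this gives the proposition. The argument for $T^d_f$ will be parallel to the one for $T^i_f$, with the fixed point $h^d(v^d)=h(v_5)$ and the incidence requirement $v_5\in P_{13}\cap P_{24}$ of \cref{const:tropical_moduli} playing the roles of the fixed point and of the spine-path in what follows, so I discuss only $T^i_f$.

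\emph{Closedness.} By \cref{const:subset_Ti,const:TiTd}, every curve in $T^i_f$ comes from a pointed extended tropical cylinder whose spine-path---the path joining $v_1$ and $v_2$---is isomorphic to the fixed extended spine $\hh^i$; after the fixed retraction $\tau_f\colon B\to B_f$, this means that the restriction of $h$ to that path coincides, via the isomorphism of \cref{const:subset_Ti}, with $\tau_f\circ\hh^i$, so in particular it has fixed combinatorial type and fixed positive edge lengths. Along a sequence in $T^i_f$ converging inside $M^i_{2+1}(B_f)$, no edge of the spine-path can collapse, since its length is bounded below by the corresponding length of $\hh^i$; and balancing together with the condition $h(v_3)=h^0(v^0_1)$ (resp.\ $h^d(v^d)$) are closed. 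Hence the limiting curve still has spine-path $\tau_f\circ\hh^i$, and therefore lies in $T^i_f$.

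\emph{Openness.} This rests on a rigidity statement: if $C\in T^i_f$ and $C'\in M^i_{2+1}(B_f)$ is close enough to $C$, then the spine-path of $C'$ equals that of $C$, whence $C'\in T^i_f$. I would prove it by walking along the spine-path of $C'$ outward from the marked point $v_3$. The image $h'(v_3)$ is fixed by the defining condition of $M^i_{2+1}(B_f)$; all weight vectors are integral, hence locally constant, so the successive edges of the spine-path of $C'$ have the same directions as for $C$, and it only remains to determine their lengths. At a vertex $v$ where the spine-path bends we have $h'(v)\neq O$ (the spine-path of $C$ is a fixed compact set avoiding $O$ by \cref{def:spine}\,(1), and for $C'$ close to $C$ the same holds), so $C'$ is balanced at $v$; the edges at $v$ not lying on the spine-path can be neither unbounded ends nor edges reaching a boundary point, because $M^i_{2+1}(B_f)$ admits only the two boundary points $v_1,v_2$, so by induction over the tree they assemble into branches all of whose $1$-valent vertices map to $O$. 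A maximum-principle argument for a balanced tree with all its leaves at the apex of a strongly convex cone of $B$ shows that the total weight of the branch at $v$ points into that cone, towards $O$; being integral and close to the branch weight of $C$ at the corresponding vertex, it is equal to it, namely the fixed positive multiple of the primitive direction pointing from that vertex to $O$. Hence the first branch edge at $v$ points in this fixed direction, and, as the branch must still terminate at $O$, the vertex $v$ is forced to lie on the fixed ray through $O$ in that direction---the very ray carrying the corresponding vertex of $C$. Thus each bend vertex, and each point where the spine-path meets $\partial B_f$ (where the boundary condition of \cref{def:rational_tropical_curve_in_Bf} likewise determines the outgoing direction), imposes exactly one scalar equation fixing the preceding edge length; iterating in both directions from $v_3$ pins the spine-path down completely, so it equals $\tau_f\circ\hh^i$ and $C'\in T^i_f$.

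I expect the rigidity of the last paragraph to be the only genuine difficulty. To carry it out one must, among other things: rule out that the combinatorial type of the spine-path jumps under a small deformation---in particular that no new bend appears---which follows from integrality, since a genuine bend vector has norm at least $1$ and cannot arise by a small perturbation; prove the asserted direction of the branch weights using strong convexity of the cones of $B$ at $O$; and handle the interaction of the spine-path with $\partial B_f$ under the retraction $\tau_f$---where the spine-path refracts rather than bends---carefully enough to see that the correct lengths are still forced. For $T^d_f$ the one additional point is that the incidence condition $v_5\in P_{13}\cap P_{24}$ rigidifies the cross-shaped combinatorial type of the double-cylinder spine-path based at the fixed point $h^d(v^d)$, after which the same walk-and-count argument applies verbatim.
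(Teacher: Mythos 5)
Your proposal is correct and follows essentially the same route as the paper: the heart in both cases is the rigidity of the spine-path, obtained from the fixed image of the extra marked point, the integrality (hence local constancy) of edge directions, and the fact that any branch hanging off the spine has all its leaves at $O$ and is therefore supported on a rational ray through $O$ (the paper's \cref{lem:twig}), which pins each bend vertex to a fixed ray. The only difference is packaging: you argue open-and-closed, whereas the paper shows directly that the spine-path data is constant along any continuous path in $M^i_{2+1}(B_f)$ (resp.\ $M^d_{4+1}(B_f)$), with the $T^d_f$ case handled, as you indicate, by noting that the condition $v_5\in P_{13}\cap P_{24}$ prevents the 4-valent vertex from splitting into balanced 3-valent vertices.
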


\begin{lem} \label{lem:twig}
	Let $\Gamma$ be a \Zaffine tree which is not a point, and $r$ a vertex of $\Gamma$ called the \emph{root}.
	Let $h\colon\Gamma\to B$ be a \Zaffine immersion satisfying the following conditions:
	\begin{enumerate}
		\item The image $h(r)\neq O\in B$.
		\item \label{item:twig:balancing} For any vertex $v$ of $\Gamma$ except the root, if $h(v)$ lies in $B\setminus O$, then the \Zaffine map $h$ is balanced at $v$.
	\end{enumerate}
	Then there exists a ray $R$ in $B$ starting from $O\in B$ with rational slope such that the image $h(\Gamma)$ lies in $R$.
	In particular, the image $h(r)$ lies in $R$.
\end{lem}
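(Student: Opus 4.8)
The plan is to induct on the number of edges of $\Gamma$, the point being to ``develop away'' the singular point $O$ and reduce to a maximum principle for balanced graphs in the plane; write $V_O:=h^{-1}(O)$. I would first record three elementary facts. A $\Z$-affine immersion contracts no edge, so an edge has nonzero weight vector at either endpoint; hence a $1$-valent vertex with image in $B\setminus O$ can never be balanced, and therefore every leaf of $\Gamma$ other than $r$ maps to $O$. Next, $O$ is the apex of each maximal cone of $B$ and an endpoint of each ray, and $h$ carries every edge onto a straight segment inside one cell, so such a segment can only meet $O$ at an endpoint; thus $V_O$ consists of vertices of $\Gamma$, and every edge incident to a vertex of $V_O$ is carried onto a segment of a ray from $O$, with direction its (integral) weight vector. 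Finally, on the manifold $B\setminus O$ the $\Z$-affine structure is glued from the affine charts on the punctured maximal cones, whose images lie in $\R^2\setminus\{0\}$; so the developing map $D$ of the universal cover $\widetilde{B\setminus O}$ is a local homeomorphism into $\R^2\setminus\{0\}$ which preserves balancedness and sends rays from $O$ to rays from the origin.

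In the inductive step I may assume $\Gamma$ has an edge, hence two leaves, hence $V_O\neq\emptyset$, and I claim first that every $v_0\in V_O$ is a leaf. If not, delete $v_0$, take a component $C$ of $\Gamma\setminus v_0$ not containing $r$, and form the subtree $\Gamma_0=C\cup\{v_0\}\cup\{\text{the edge }e\text{ joining them}\}$ rooted at the far endpoint $u$ of $e$ (so $h(u)\neq O$); then $(\Gamma_0,u,h|_{\Gamma_0})$ satisfies the hypotheses and has strictly fewer edges, so by induction $h(\Gamma_0)$ lies on the ray $R_0$ from $O$ carrying $h(e)$. But the vertex of $\Gamma_0$ whose image is farthest from $O$ along $R_0$ has all of its edges directed toward $O$, hence is not balanced; it cannot be a vertex other than $u$ (such a vertex would be non-root with image $\neq O$, hence balanced), nor can it be $u$ itself, since $u$ is non-root in $\Gamma$ with $h(u)\neq O$ and therefore balanced in $\Gamma$ — a contradiction. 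Hence $V_O$ is exactly the set of leaves of $\Gamma$ distinct from $r$.

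Delete $V_O$ and its incident edges to obtain a subtree $\Gamma'\ni r$ with $h(\Gamma')\subseteq B\setminus O$, lift $h|_{\Gamma'}$ to $\widetilde{B\setminus O}$, and set $\phi:=D\circ\widetilde h$. Because each deleted edge was radial toward $O$, the weight vectors at a vertex $v$ of the edges deleted there add up in the developed picture to a negative multiple of $\phi(v)$; so $\phi$ is balanced at every $v\neq r$ with no deleted neighbor, while at a vertex $v\neq r$ with a deleted neighbor the balancing defect of $\phi$ is a \emph{positive} multiple of $\phi(v)$. I would then apply a maximum principle: for a generic linear functional $\mu$, let $v^*$ maximize $\langle\mu,\phi(\cdot)\rangle$; if $v^*\neq r$ had no deleted neighbor it would be balanced, yet the $\mu$-pairings of its edge weights are all strictly negative, which is impossible; so either $v^*=r$, or $v^*\neq r$ has a deleted neighbor, and then its balancing defect is at once a positive multiple of $\phi(v^*)$ and a vector with negative $\mu$-pairing, forcing $\langle\mu,\phi(v^*)\rangle<0$ and hence $\langle\mu,\phi(v)\rangle<0$ for all $v$. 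Letting $\mu$ range over the generic functionals for which $r$ is not the maximizer, either there are none — then $\phi$ is constant, $\Gamma'=\{r\}$, and $\Gamma$ is a star of $O$-leaves around $r$, for which the claim is clear — or else the intersection of the corresponding open half-planes contains $\phi(\Gamma')$ and so is nonempty, which forces $\{\phi(v)-\phi(r)\}_v$ onto a half-line and $\phi(\Gamma')$ onto the opposite ray from the origin. Pulling back, $h(\Gamma')$ lies on a ray $R$ from $O$; the deleted edges, radial with their non-$O$ endpoints on $R$, also lie on $R$; so $h(\Gamma)\subseteq R$, and $R$ is rational since it carries the image of an edge of $\Gamma$. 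The hard part is the bookkeeping here: one must play the single ``free'' vertex $r$ off against the radially outward defects left by amputating the $O$-leaves, all the while exploiting that $D$, though not injective because of the monodromy around $O$, still avoids the origin and preserves radial directions; granting that, the planar maximum principle finishes the argument.
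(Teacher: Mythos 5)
Your proposal is correct in substance, but it takes a genuinely different and much heavier route than the paper. The paper's proof is a short propagation entirely inside $B$: every $1$-valent vertex other than $r$ must map to $O$ (an immersed edge has nonzero weight vector, so such a vertex cannot be balanced), hence every edge containing such a leaf lies on a rational ray through $O$; then, reasoning from the leaves toward the root, at each non-root vertex $v$ with $h(v)\neq O$ the rays carrying the already-treated edges all contain $h(v)\neq O$ and therefore coincide, and balancing forces the remaining edge onto the same ray. You share the first observation, but then (i) prove by induction on edges the structural claim that every vertex of $h\inv(O)$ is $1$-valent, and (ii) pass to the universal cover of $B\setminus O$, develop, and run a planar maximum-principle/normal-cone argument. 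Step (i) is a nice addition: the paper's leaf-to-root induction tacitly skips non-root vertices mapping to $O$, and your amputation argument shows such vertices cannot occur, which is exactly what makes the ``same ray'' conclusion clean. But once (i) is known, the paper's one-step propagation finishes immediately, so the developing-map apparatus in (ii) is unnecessary; it also carries some risk of imprecision: the charts of the singular \Zaffine structure are not ``punctured maximal cones'' but stars of rays, i.e.\ unions of two adjacent cones (cf.\ \cite[\S 3]{Yu_Enumeration_cylinders_2015}) --- fortunately the only properties of $D$ you use (local homeomorphism into $\R^2\setminus\{0\}$, compatibility with radial rays and with balancing, straightness of developed edges because each edge lies in a single cone where the singular structure agrees with the linear one) do hold for those charts. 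The final step you flag as ``bookkeeping'' does go through: for every generic $\mu$ for which $r$ is not a maximizer one gets $\langle\mu,\phi(v)\rangle<0$ for all $v$; since $\phi(r)\neq 0$ lies in the polar of this set of directions, that set spans at most a half-plane, so the normal cone at $\phi(r)$ contains a half-plane, all $\phi(v)$ lie on a single ray through the origin, and connectedness pulls this back to a single ray from $O$. One shared loose end: both your argument and the paper's implicitly assume $\Gamma$ has at least one edge (for a one-vertex tree the statement would demand a rational ray through an arbitrary point of $B$), which is the only case relevant to the application.
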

\begin{proof}
	Let $V(\Gamma)$ denote the set of vertices of $\Gamma$.
	We call 1-valent vertices in $V(\Gamma)\setminus\{r\}$ leaves of $\Gamma$.
	By Condition (\ref{item:twig:balancing}), if $v$ is a leaf of $\Gamma$, then $h(v)=O\in B$.
	Therefore, for every edge $e$ of $\Gamma$ containing a leaf, the image $h(e)$ is contained in a ray $R_e$ in $B$ starting from $O\in B$.
	Since $h$ is a \Zaffine immersion, the ray $R_e$ has rational slope.
	Let $v$ be any vertex of $\Gamma$ except $r$ such that $h(v)\neq O$.
	Let $e_0,\dots,e_n$ be the edges connected to $v$.
	If we know that for $i=1,\dots,n$, the image $h(e_i)$ is contained in a ray $R_{e_i}$ in $B$ starting from $O$, then the rays $R_{e_1},\dots,R_{e_n}$ are all equal; moreover, by Condition (\ref{item:twig:balancing}), the image $h(e_0)$ is also contained in the same ray.
	Therefore, reasoning from the leaves to the root, we conclude that for all edge $e$ of $\Gamma$, the image $h(e)$ is contained in a same ray $R$ starting from $O\in B$ with rational slope.
\end{proof}

\begin{proof}[Proof of \cref{prop:tropical_rigidity}]
	The conditions in Constructions \ref{const:subset_Ti} and \ref{const:subset_Td} are all given by equalities, therefore, the subsets $T^i_\rf\subset M^i_{2+1}(B_\rf)$, $T^\rd_\rf\subset M^\rd_{4+1}(B_\rf)$ are closed.
	Now let us prove that they are open.
	
	Let $K=[\Gamma,(v_1,v_2,v_3),h\colon\Gamma\to B_\rf]\in T^i_\rf$, and let $\Gamma^s$ denote the path connecting $v_1$ and $v_2$ in $\Gamma$.
	We claim that the image $h(\Gamma^s)\subset B_\rf$ is fixed during small deformation of $K$ inside $M^i_{2+1}(B_\rf)$ (see \cref{fig:rigidity}).
	
	\begin{figure}[!ht]
		\centering
		\setlength{\unitlength}{0.3\textwidth}
		\begin{picture} (1,1)
		\put(0,0){\includegraphics[width=\unitlength]{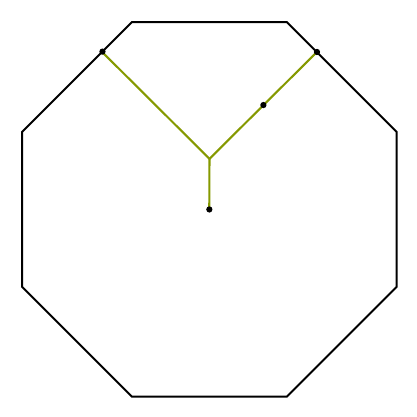}}
		\put(0.5,0.42){$O$}
		\put(0.16,0.9){$v_1$}	\put(0.76,0.9){$v_2$}
		\put(0.65,0.7){$v_3$}
		\put(0.8,0.05){$B_\rf$}
		\end{picture}
		\caption{An example of a tropical curve in $T^i_\rf$ having one bending vertex on the spine.}
		\label{fig:rigidity}
	\end{figure}

	Let $v$ be a 2-valent vertex of $\Gamma^s$ and let $e_1, e_2$ denote the two edges connected to $v$.
	We call $v$ a bending vertex of $\Gamma^s$ if we have $w_v(e_1)+w_v(e_2)\neq 0$ for the weight vectors.
	When we deform $K$ in $M^i_{2+1}(B_\rf)$, by the \Zaffine property, the slopes of the edges are fixed.
	Note that the image $h(v_3)\in B_\rf$ is always fixed by \cref{const:tropical_moduli}.
	Therefore, if $\Gamma^s$ contains no bending vertex, the image $h(\Gamma^s)\subset B_\rf$ must be fixed during small deformation of $K$.
		
	Now assume that $v$ is a bending vertex of $\Gamma^s$.
	Let $\Gamma_v$ be the closure of a connected component of $\Gamma\setminus\Gamma^s$ such that $\Gamma_v$ contains $v$.
	Applying \cref{lem:twig} to $\Gamma_v$, $v$ and $h|_{\Gamma_v}$, we see that $h(v)$ lies in a ray $R_v$ in $B$ starting from $O\in B$ with rational slope.
	Therefore, during any small deformation of $K$ inside $M^i_{2+1}(B_\rf)$, the image $h(v)$ must stay on the ray $R_v$.
	Since the image $h(v_3)$ and the slopes of the edges of $\Gamma^s$ are all fixed during deformation, we conclude that the image $h(\Gamma^s)\subset B_\rf$ is fixed during small deformation of $K$.
	So the claim holds.
	
	By \cref{const:CB} Condition (\ref{item:CB:infinity}), the preimage $h\inv(\partial B_\rf)$ in $\Gamma$ is exactly $\{v_1,v_2\}$.
	Therefore, by \cref{def:rational_tropical_curve_in_Bf} Conditions (\ref{item:rational_tropical_curve_in_Bf:boundary1}) and (\ref{item:rational_tropical_curve_in_Bf:boundary2}), the preimage $h\inv(\partial B_\rf)$ stays $\{v_1,v_2\}$ during small deformation of $K$ inside $M^i_{2+1}(B_\rf)$.
	We conclude that $[\Gamma^s,(v_1,v_2,v_3),h|_{\Gamma^s}]$ is fixed during small deformation of $K$ inside $M^i_{2+1}(B_\rf)$.
	In other words, the subset $T^i_\rf\subset M^i_{2+1}(B_\rf)$ is open, hence it is a union of connected components.
	
	Next let us show that the subset $T^\rd_\rf\subset M^\rd_{4+1}(B_\rf)$ is open using the same idea.
	Let $K=[\Gamma,(v_1,\dots,v_5),h\colon\Gamma\to B_\rf]\in T^\rd_\rf$, and let $\Gamma^s$ denote the convex hull of $v_1,\dots,v_4$ in $\Gamma$.
	Let $P_{13}$ be the path connecting $v_1$, $v_3$ in $\Gamma$, and $P_{24}$ the path connecting $v_2$, $v_4$ in $\Gamma$.
	We have $P_{13}\cap P_{24}=v_5$.
	By \cref{const:tropical_moduli} Condition (\ref{item:tropical_moduli:intersection}), during deformation of $K$ inside $M^\rd_{4+1}(B_\rf)$, the intersection $P_{13}\cap P_{24}$ stays nonempty.
	Therefore, by the balancing condition, the 4-valent vertex of $\Gamma^s$ cannot deform into two 3-valent vertices.
	So the intersection $P_{13}\cap P_{24}$ is always a point during deformation, which is always the vertex $v_5$.
	
	Now let $P_{14}$ be the path connecting $v_1$, $v_4$ in $\Gamma$, and $P_{23}$ the path connecting $v_2$, $v_3$ in $\Gamma$.
	The two paths intersect at $v_5$.
	By \cref{const:tropical_moduli}, the image $h(v_5)\in B_\rf$ is fixed during deformation of $K$ inside $M^\rd_{4+1}(B_\rf)$.
	Applying our reasoning for $T^i_\rf$ in the previous paragraphs to the paths $P_{14}$ and $P_{23}$, we deduce that $[\Gamma^s,(v_1,\dots,v_5),h|_{\Gamma^s}]$ is fixed during small deformation of $K$ inside $M^\rd_{4+1}(B_\rf)$.
	We conclude that the subset $T^\rd_\rf\subset M^\rd_{4+1}(B_\rf)$ is open, hence it is a union of connected components.
\end{proof}

\section{Analytic constructions}\label{sec:analytic_constructions}

In this section, we apply the tropical constructions in \cref{sec:tropical_constructions} to the constructions of analytic moduli spaces.
The main results are the properness statements in Propositions \ref{prop:properness} and \ref{prop:properness_d}.

We begin by replacing our target spaces by toric blowups for two technical reasons:
first, we want our curves to touch only the smooth locus of the boundary divisor; the second reason is related to the approximation $B_\rf\subset B$ explained in the beginning of \cref{sec:tropical_constructions}.
The toric blowups cause a bit of notation complications regarding curve classes, which are dealt with in Constructions \ref{const:sbeta} - \ref{const:sdbeta}.
First time readers are advised to skip these technical considerations.

We endow the projective line $\bbP^1_k$ with the standard toric structure.
The subdivision $\tB'$ in \cref{const:tB} induces a toric blowup\footnote{We refer to \cite[Chapter II]{Kempf_Toroidal_1973} for the relationship between subdivisions and blowups.} $\tY\to\bbP^1_k\times Y_k$.
The subdivision $B'_\infty$ in \cref{const:CB} induces a toric blowup $Y'\to Y_k$.
Let $\partial\tY$ and $\partial Y'$ denote the toroidal boundaries of $\tY$ and $Y'$ respectively.
By \cref{const:CB} Condition (\ref{item:CB:projection}), the map of simplicial cone complexes $\tB'\to B'_\infty$ induces a map of $k$-varieties $p_{Y,2}\colon\tY\to Y'$.

For $i=0,\dots,4$, let $m^i_1$ be the lattice length of the weight vector $w^i_1$ defined before \cref{const:tB}.
Let $\widetilde r^i_1$ be as in \cref{const:tB}(\ref{item:tB:ray}).
Let $r^i_1$ be the image of $\widetilde r^i_1$ under the map $\tB'\to B'_\infty$.
Let $\tD^i_1$ be the divisor in $\tY$ corresponding to the ray $\widetilde r^i_1$.
Let $D^i_1$ be the divisor in $Y'$ corresponding to the ray $r^i_1$.
Similarly, we define $m^i_2$, $\tD^i_2$ and $D^i_2$.

Now, let $i\in\set{0,\dots,4}$, and let $\beta$ be an element in $\NE(Y)$, the cone of curves of $Y$.

\begin{construction}\label{const:sbeta}
	 Let $s(\hL^i,\beta)$ denote the subset of $\NE(Y')$ consisting of elements $\gamma\in\NE(Y')$ satisfying the following conditions:
	 \begin{enumerate}
		\item The element $\gamma$ admits a representative which is a closed rational curve $C$ contained in $(Y'\setminus\partial Y')\cup(D^i_1\cup D^i_2)$ that intersects the divisor $D^i_1$ at exactly one point with multiplicity $m^i_1$, and the divisor $D^i_2$ at exactly one point with multiplicity $m^i_2$.
		\item Under the composite morphism $\NE(Y')\to\NE(Y_k)\xrightarrow{\sim}\NE(Y)$, the image of $\gamma$ equals $\beta\in\NE(Y)$.
	\end{enumerate}
\end{construction}

\begin{construction}\label{const:spbeta}
	Let $s^\rp(\hL^i,\beta)$ denote the subset of $\NE(\tY)$ consisting of elements $\gamma\in\NE(\tY)$ satisfying the following conditions:
	\begin{enumerate}
		\item \label{const:spbeta:intersect} The element $\gamma$ admits a representative which is a closed rational curve $C$ contained in $(\tY\setminus\partial\tY)\cup\tD^i_1\cup\tD^i_2$ that intersects the divisor $\tD^i_1$ transversally at exactly one point, and the divisor $\tD^i_2$ also transversally at exactly one point.
		\item \label{const:spbeta:P1} Under the projection $\NE(\tY)\to\NE(\bbP^1_k)$, the image of $\gamma$ is the fundamental class of $\bbP^1_k$.
		\item Under the composite morphism $\NE(\tY)\to\NE(Y_k)\xrightarrow{\sim}\NE(Y)$, the image of $\gamma$ equals $\beta\in\NE(Y)$.
	\end{enumerate}
	The superscript $^\rp$ in the notation is short for ``parametrized''.
\end{construction}

\begin{construction}\label{const:sdbeta}
	Let $s^\rd(\beta)$ denote the subset of $\NE(Y')$ consisting of elements $\gamma\in\NE(Y')$ satisfying the following conditions:
	\begin{enumerate}
		\item The element $\gamma$ admits a representative which is a closed rational curve $C$ contained in $(Y'\setminus\partial Y')\cup (D^1_1\cup D^1_2\cup D^2_1\cup D^2_2)$ that intersects the divisor $D^1_1$ (resp.\ $D^1_2, D^2_1, D^2_2$) at exactly one point with multiplicity $m^1_1$ (resp.\ $m^1_2, m^2_1, m^2_2$).
		\item Under the composite morphism $\NE(Y')\to\NE(Y_k)\xrightarrow{\sim}\NE(Y)$, the image of $\gamma$ equals $\beta\in\NE(Y)$.
	\end{enumerate}
\end{construction}

\begin{rem}
	Each of the subsets we defined in Constructions \ref{const:sbeta} - \ref{const:sdbeta} contains at most one element.
	To see this for $s(\hL^i,\beta)$, note that $N^1(Y')$ is a direct sum of $N^1(Y_k)$ with classes of exceptional curves, where $N^1(-)$ denotes divisors modulo numerical equivalence.
	So the conditions in \cref{const:sbeta} determine the curve class uniquely.
		The same reasoning applies to Constructions \ref{const:spbeta} and \ref{const:sdbeta} too.
\end{rem}

Recall that a stable map into a variety is a map from a pointed proper nodal curve to the variety having only finitely many automorphisms.
An equivalent characterization of having finitely many automorphisms is that every irreducible component of the domain nodal curve of genus 0 (resp.\ 1) which maps to a point must have at least 3 (resp.\ 1) special points on its normalization, where by special point, we mean the preimage of either a marked point or a node by the normalization map.
We refer to \cite{Kontsevich_Enumeration_1995,Fulton_Stable_1997} for the theory of stable maps in algebraic geometry, and to \cite{Yu_Gromov_compactness} for the theory in non-archimedean analytic geometry.

\begin{construction}\label{const:Mbeta}
	Let $\cM(\hL^i,\beta)$ denote the stack of 3-pointed rational stable maps into $Y'$ satisfying the following conditions:
	\begin{enumerate}
		\item The first marked point maps to $D^i_1$ with multiplicity greater than or equal to $m^i_1$; the second marked point maps to $D^i_2$ with multiplicity greater than or equal to $m^i_2$.
		(By ``multiplicity greater than or equal to'', we also allow the irreducible component containing the marked point to lie completely inside the divisor.
		So this condition is a Zariski closed condition.)
		\item The image of the fundamental class of the domain curve belongs to $s(\hL^i,\beta)$.
	\end{enumerate}
\end{construction}

\begin{construction}\label{const:Mpbeta}
	Let $\cM^\rp(\hL^i,\beta)$ denote the stack of 3-pointed rational stable maps into $\tY$ satisfying the following conditions:
	\begin{enumerate}
		\item The first marked point maps to $\tD^i_1$; the second marked point maps to $\tD^i_2$.
		\item The image of the fundamental class of the domain curve belongs to $s^\rp(\hL^i,\beta)$.
	\end{enumerate}
\end{construction}

The above parametrized version of moduli spaces is used in the construction of the counts $N(L,\beta)$ reviewed in \cref{sec:review_of_cylinder_counts}, it is also essential for \cref{prop:straight_spine}.

\begin{construction}\label{const:Mdbeta}
	Let $\cM^\rd(\beta)$ denote the stack of 5-pointed rational stable maps into $Y'$ satisfying the following conditions:
	\begin{enumerate}
		\item The first four marked points map respectively to the divisors $D^1_1, D^1_2, D^2_1, D^2_2$ with multiplicities greater than or equal to $m^1_1, m^1_2, m^2_1, m^2_2$ respectively.
		\item The image of the fundamental class of the domain curve belongs to $s^\rd(\beta)$.
	\end{enumerate}
\end{construction}

\begin{rem}
	Let $\cM(\hL^i,\beta)\an$ denote the analytification of the algebraic stack $\cM(\hL^i,\beta)$ (see \cite[\S 6.1]{Porta_Yu_Higher_analytic_stacks_2014}).
	By \cite[Theorem 8.7]{Yu_Gromov_compactness}, the \kanal stack $\cM(\hL^i,\beta)\an$ parametrizes 3-pointed rational analytic stable maps into the analytification $(Y')\an$ satisfying the conditions of \cref{const:Mbeta}.
	Similar remarks apply also to the stacks in Constructions \ref{const:Mpbeta} and \ref{const:Mdbeta}.
\end{rem}

The subdivision $B'$ of $B$ gives rise to an snc log-model $(\sX,H)$ of the $k$-variety $X_k$ (see \cite[\S 2]{Yu_Enumeration_cylinders_2015}).
Note that the generic fiber of $\sX$ is isomorphic to the $k$-variety $Y'$.
In other words, $\sX$ is an snc model of $Y'$.
Up to a finite ground field extension, we can assume furthermore that $\sX$ is a strictly semi-stable model of $Y'$ (see \cite{Kempf_Toroidal_1973}).
By construction, the Clemens polytope associated to $\sX$ is isomorphic to $B_\rf$ (i.e.\ the finite part of $B'$) as a simplicial complex.
Let
\[\tau_\rf\colon (Y')\an\to B_\rf\]
denote the corresponding retraction map (see \cite[Proposition 2.7]{Yu_Gromov_compactness}).
Note that the toroidal boundary of $(Y')\an$ maps onto $\partial B_\rf$.

The following lemma relates analytic stable maps in $(Y')\an$ to tropical curves in $B_\rf$.

\begin{lem} \label{lem:tropicalization}
	Let $[C, (s_1,s_2,s_3), f\colon C\to (Y')\an]$ be an analytic stable map in $\cM(\hL^i,\allowbreak\beta)\an$.
	The composite map $C\xrightarrow{f}(Y')\an\xrightarrow{\tau_\rf} B_\rf$ factors as $C\xrightarrow{c}\Gamma\xrightarrow{h}B_\rf$, where $C\xrightarrow{c}\Gamma$ contracts all paths in $C$ that are contracted by $\tau_\rf\circ f$.
	Let $v_i\coloneqq c(s_i)$ for $i=1,2,3$.
	Then $[\Gamma,(v_1,v_2,v_3),h]$ is a pointed rational tropical curve in $B_\rf$ with 2 boundary points in the sense of \cref{def:rational_tropical_curve_in_Bf}.
\end{lem}
\begin{proof}
Let $\fX$ be the completion of $\sX$ along its special fiber.
By \cite[Theorem 1.5]{Yu_Gromov_compactness}, up to passing to a finite ground field extension, we can find a strictly semi-stable formal model $\fC$ of $C$ and a morphism of formal schemes $\ff\colon\fC\to\fX$ such that $\ff_\eta\simeq f$.
Let $S_\fC$ be the Clemens polytope of $\fC$ and $\tau_\fC\colon C\to S_\fC$ the retraction map.
By the functoriality of Clemens polytope (see \cite[\S 2]{Yu_Gromov_compactness}), we obtain a commutative diagram
\[\begin{tikzcd}
C \ar{r}{f} \ar{d}{\tau_\fC} & (Y')\an \ar{d}{\tau_\rf} \\
S_\fC \ar{r}{S_f} & B_\rf,
\end{tikzcd}\]
where $S_f$ is affine on every edge of $S_\fC$.
The map $S_f\colon S_\fC\to B_\rf$ factors as $S_\fC\xrightarrow{c'}\oS_\fC\xrightarrow{\oS_f}B_\rf$ where $c'$ contracts all edges of $S_\fC$ that are contracted by $S_f$, so that $\oS_f$ is an immersion.
Thus the map $\Gamma\xrightarrow{h} B_\rf$ is isomorphic to $\oS_\fC\xrightarrow{\oS_f}B_\rf$.
This shows that $\Gamma$ is a finite \Zaffine tree, and $h\colon\Gamma\to B_\rf$ is a \Zaffine immersion.

Now let us verify Conditions (1)-(4) in \cref{def:rational_tropical_curve_in_Bf}.
Since $f(s_1),f(s_2)\in(\partial Y')\an$ and $\tau_\rf((\partial Y')\an)=\partial B_\rf\subset B_\rf$, Condition (\ref{item:rational_tropical_curve_in_Bf:boundary1}) is satisfied.
Next, note that any point $p\in B^\circ_\rf\setminus O$ has an open neighborhood $V$ in $B_\rf$ such that the map $\tau\inv_\rf(V)\to V$ is a trivial affinoid torus fibration.
So Condition (\ref{item:rational_tropical_curve_in_Bf:balancing1}) is satisfied (see \cite[Theorem 6.14]{Baker_Nonarchimedean_2011}).
For Condition (\ref{item:rational_tropical_curve_in_Bf:balancing2}), consider the extension of the valuation map $\val\colon\Gmk\an\to\R$ to the map $\widebar{\val}\colon(\bbA^1_k)\an\to\R\cup\{+\infty\}$.
Let $\phi\colon\R\cup\{+\infty\}\to(-\infty,0]$ be the map contracting $[0,+\infty]$ to the point 0.
Let $\psi\coloneqq\phi\circ\widebar{\val}\colon(\bbA^1_k)\an\to(-\infty,0]$ be the composition.
For any point $p$ in the relative interior of a 1-dimensional cell of $\partial B$, there exists a small open neighborhood $V$ of $p$ in $B_\rf$ such that the map $\tau\inv_\rf(V)\to V$ is isomorphic to the map $(\psi,\val)\colon(\bbA^1_k)\an\times\Gmk\an\to(-\infty,0]\times\R$ localized over an open subset of $(-\infty,0]\times\R$.
Therefore, by projecting $\tau_\rf\inv(V)$ to the factor $\Gmk\an$, \cite[Theorem 6.14]{Baker_Nonarchimedean_2011} implies Condition (\ref{item:rational_tropical_curve_in_Bf:balancing2}) too.
Finally, if $h\inv(\partial B_\rf)$ is a finite set, then $f(C)$ does not contain any component of $(\partial Y')\an$.
So by Constructions \ref{const:sbeta} and \ref{const:Mbeta}, $f(C)$ cannot have extra intersections with $(\partial Y')\an$ except at $s_1$ and $s_2$.
Thus the following lemma implies Condition (\ref{item:rational_tropical_curve_in_Bf:boundary2}).
\end{proof}

\begin{rem}
	We mention a generalization \cite[Theorem 4.1]{Yu_Tropicalization_2014} and a variant \cite[Theorem 3.4.2]{Ranganathan_Skeletons_I_2015} of \cref{lem:tropicalization}.
\end{rem}

\begin{lem} \label{lem:boundary}
	In the context of \cref{lem:tropicalization}, let $v$ be a vertex of $\Gamma$ and $U$ a small closed connected neighborhood of $v$ in $\Gamma$.
	Assume that $h(v)\in\partial B_\rf$, $h(U\setminus v)\cap\partial B_\rf=\emptyset$, and $O\not\in h(U)$.
	Then there is a point $x\in C$ such that $c(x)=v$ and $f(x)\in(\partial Y')\an$.
\end{lem}
\begin{proof}
	Suppose to the contrary that such a point $x$ does not exist.
	Since $h(U\setminus v)\cap\partial B_\rf=\emptyset$, then $f(c\inv(U))\cap(\partial Y')\an=\emptyset$.
	Using formal models as in the proof of \cref{lem:tropicalization}, we see that the composite map $c\inv(U)\xrightarrow{f} X_k\an\xrightarrow{\tau}B$ factors as $c\inv(U)\xrightarrow{c'}\Delta\xrightarrow{h'}B$ where $c\inv(U)\xrightarrow{c'}\Delta$ contracts all paths in $c\inv(U)$ that are contracted by $(\tau\circ f)$.
	Then the composite map $\Delta\xrightarrow{h'}B\to B_\rf$ factors as $\Delta\xrightarrow{r} U\xrightarrow{h} B_\rf$, where $r$ contracts a subtree of $\Delta$ to the point $v\in U$.
	Note that we have a natural embedding $U\subset\Delta$ and $r\colon\Delta\to U$ is a retraction map.
	Since $h'$ is balanced at $v$, there is at least one edge $e$ of $\Delta$ connected to $v$ such that $r(e)=v$.
	Let $\Delta_e$ be the closure of the connected component of $\Delta\setminus v$ containing $e$.
	Let $v'$ be a 1-valent vertex of $\Delta_e$ other than $v$.
	Since $O\not\in h(U)$, we have $h'(v')\neq O$.
	Then the \Zaffine map $h'$ must be balanced at the vertex $v'$, which contradicts the fact that $v'$ is 1-valent.
	So the proof is complete.
\end{proof}

By \cref{lem:tropicalization}, we obtain a tropicalization map
\[\tau^i_\cM\colon\cM(\hL^i,\beta)\an\to M_{2+1}(B_\rf).\]
Similarly, we have a tropicalization map for the stack $\cM^\rd(\beta)$
\[\tau_{\cM^\rd}\colon\cM^\rd(\beta)\an\to M_{4+1}(B_\rf).\]

A point in the stack $\cM^\rp(\hL^i,\beta)\an$ corresponds to a 3-pointed rational analytic stable map into $\tY\an$.
If we compose this stable map with the map $p\an_{Y,2}\colon\tY\an\to(Y')\an$, we obtain a map from a nodal curve to $(Y')\an$.
Its tropicalization is a pointed rational tropical curve in $B_\rf$ with 2 boundary points in the sense of \cref{def:rational_tropical_curve_in_Bf}.
So we obtain a tropicalization map
\[\tau^i_{\cM^\rp}\colon\cM^\rp(\hL^i,\beta)\an\to M_{2+1}(B_\rf).\]

Let us denote
\begin{align*}
\cM(\hL^i,\beta,T)&\coloneqq(\tau^i_\cM)\inv(T^i_\rf),\\
\cM^\rp(\hL^i,\beta,T)&\coloneqq(\tau^i_{\cM^\rp})\inv(T^i_\rf),\\
\cM^\rd(\beta,T)&\coloneqq(\tau_{\cM^\rd})\inv(T^\rd_\rf),
\end{align*}
where the preimages are understood merely as functors of points (without extra geometric structures a priori).

Let
\begin{align*}
	W^0&\coloneqq\tau_\rf\inv(h^0(v^0_1))\subset (Y')\an,\\
	W^\rp&\coloneqq\Gmk\an\times W^0\subset \tY\an,\\
	W^\rd&\coloneqq\tau\inv_\rf(h^\rd(v^\rd))\subset (Y')\an.
\end{align*}

\begin{prop}\label{prop:properness}
	Consider the evaluation map of the third marked point for the stack $\cM(\hL^0,\beta)\an$
	\[\ev_3\colon\cM(\hL^0,\beta)\an\to(Y')\an.\]
	Let $\cM(\hL^0,\beta)_W\coloneqq\ev\inv_3 W^0$ and
	\[\cM(\hL^0,\beta,T)_W\coloneqq \cM(\hL^0,\beta)_W\cap\cM(\hL^0,\beta,T).\]
	Then $\cM(\hL^0,\beta,T)_W$ is a \kanal stack, and the restriction
	\[\ev_3|_{\cM(\hL^0,\beta,T)_W}\colon \cM(\hL^0,\beta,T)_W \to W^0\]
	is a proper map.
	Similarly, consider the evaluation map of the third marked point for the stack $\cM^\rp(\hL^0,\beta)\an$
	\[\ev_3\colon\cM^\rp(\hL^0,\beta)\an\to\tY\an.\]
	Let $\cM^\rp(\hL^0,\beta)_W\coloneqq\ev_3\inv W^\rp$ and
	\[\cM^\rp(\hL^0,\beta,T)_W\coloneqq \cM^\rp(\hL^0,\beta)_W\cap\cM^\rp(\hL^0,\beta,T).\]
	Then $\cM^\rp(\hL^0,\beta,T)_W$ is a \kanal stack, and the restriction
	\[\ev_3|_{\cM^\rp(\hL^0,\beta,T)_W}\colon \cM^\rp(\hL^0,\beta,T)_W \to W^\rp\]
	is a proper map.
\end{prop}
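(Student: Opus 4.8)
The plan is to verify the valuative criterion for properness, exploiting the properness of the moduli of analytic stable maps into the proper target $(Y')\an$ together with the tropical rigidity of \cref{prop:tropical_rigidity}. Write $\gamma$ for the unique element of $s(L^0,\beta)$. Since $\cM(L^0,\beta)\an$ is a substack of the (proper, hence separated, finite-type) analytic Kontsevich stack $\bcM_{0,3}((Y')\an,\gamma)$ (cf.\ \cite{Yu_Gromov_2014,Kontsevich_Enumeration_1995,Fulton_Stable_1997}), and $W^0=\tau_Y\inv(h^0(v^0_1))$ is closed in $(Y')\an$, the restriction of $\ev_3$ to $\cM(L^0,\beta,T)_W$ is separated and of finite type, so only universal closedness remains to be checked. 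By the valuative criterion it suffices to fix a complete valued extension $K/k$ with valuation ring $R$, a point $\xi_\eta$ of $\cM(L^0,\beta,T)_W$ over $K$, and an extension of $\ev_3(\xi_\eta)$ to an $R$-point of $W^0$, and to produce a compatible $R$-point of $\cM(L^0,\beta,T)_W$.

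First I would extend the underlying stable map. By the properness of $\bcM_{0,3}((Y')\an,\gamma)$, a consequence of the properness of $(Y')\an$ via \cite[Theorem 8.7]{Yu_Gromov_2014}, the point $\xi_\eta$ extends to an $R$-point, that is, an analytic stable map $\xi_s=(C_0,(s_1,s_2,s_3),f_0)$ over the residue field. Curve classes being locally constant in families, $\xi_s$ has class $\gamma\in s(L^0,\beta)$, so Condition (2) of \cref{const:Mbeta} holds, and $\ev_3(\xi_s)\in W^0$ by compatibility with the given $R$-point of $W^0$. Next I would control the tropicalization: tropicalizing the family over $\Spec R$ in the sense of \cite[\S 3]{Yu_Tropicalization_2014} yields a continuous path in $M_{2+1}(B_f)$ from $\tau^0_\cM(\xi_\eta)$ to $\tau^0_\cM(\xi_s)$; and since $\ev_3$ carries the whole family into $W^0=\tau_Y\inv(h^0(v^0_1))$ and tropicalization is compatible with $\tau_Y$, this path lies in $M^0_{2+1}(B_f)$. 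Then \cref{prop:tropical_rigidity} together with $\tau^0_\cM(\xi_\eta)\in T^0_f$ forces $\tau^0_\cM(\xi_s)\in T^0_f$.

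Then I would deduce Condition (1) of \cref{const:Mbeta} for $\xi_s$ from the shape of tropical curves in $T^0_f$: the spine-path of such a curve is the retraction to $B_f$ of the extended spine $\hL^0$, whose two infinite ends (in the directions $w^0_1$ and $w^0_2$) retract to the points of $\partial B_f$ corresponding to $D^0_1$ and $D^0_2$, while every remaining edge maps into a ray emanating from $O$ by \cref{lem:twig} and hence stays in $B_f^\circ$. Consequently no component of $f_0$ can map into a boundary divisor of $Y'$ other than $D^0_1$ or $D^0_2$, so $f_0$ meets $\partial Y'$ only along $D^0_1\cup D^0_2$; and, the end-weights $w^0_1,w^0_2$ of $\hL^0$ having lattice lengths $m^0_1$ and $m^0_2$, the intersection of $f_0$ with $D^0_1$ (resp.\ $D^0_2$) has multiplicity exactly $m^0_1$ (resp.\ $m^0_2$), concentrated at $s_1$ (resp.\ $s_2$). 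Hence $\xi_s$ is an $R$-point of $\cM(L^0,\beta,T)_W$, which completes the valuative criterion and proves the first assertion. The parametrized statement I would prove identically with $\tY$, $\tD^0_j$ and $W^p$ in place of $Y'$, $D^0_j$ and $W^0$: the projection of the class to $\NE(\bbP^1_k)$ is locally constant hence preserved, transversality at $\tD^0_1$ and $\tD^0_2$ persists in the limit exactly as above (now the end-weights have lattice length $1$), and the tropicalization map $\tau^0_{\cM^p}$ is controlled through the morphism $p\an_{Y,2}\colon\tY\an\to(Y')\an$.

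The hard part will be the last two steps: identifying the correct tropicalization of the limiting family, and converting membership of that tropicalization in $T^0_f$ into the absence of unwanted boundary components of $f_0$ and the persistence of the prescribed tangency orders. This is precisely where \cref{lem:twig} and \cref{prop:tropical_rigidity} carry the weight of the argument, and where one must lean on the continuity and functoriality of the tropicalization construction of \cite{Yu_Tropicalization_2014}; by contrast, the properness of the ambient analytic Kontsevich stack and the local constancy of curve classes are routine.
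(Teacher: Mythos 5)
There is a genuine gap, and it is foundational: you run the argument through ``the valuative criterion for properness,'' but the map in question is a morphism of \kanal (Berkovich) spaces/stacks, where properness is a topological condition (compactness of preimages of compacts plus absence of boundary), not something certified by lifting $R$-points of a valuation ring. Your reduction ``separated and of finite type, so only universal closedness remains to be checked'' silently assumes exactly the hard content of the proposition: $\cM(L^0,\beta,T)_W$ is by definition the preimage of the tropical locus $T^0_f$ under the continuous map $\tau^0_\cM$ intersected with $\ev_3\inv(W^0)$, i.e.\ an analytic domain cut out by a tropical condition, and there is no a priori reason it is quasi-compact (``finite type'') over $W^0$; even in frameworks where a valuative criterion exists (partial properness in the adic setting), it must be supplemented by precisely this quasi-compactness. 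Relatedly, several steps of your lifting procedure are not well defined in this setting: an ``$R$-point of $W^0$'' for a general valuation ring $R$, and the ``tropicalization of the family over $\Spec R$'' producing a continuous path in $M_{2+1}(B_f)$, are algebro-geometric constructions that do not transplant to the analytic domain $\cM(L^0,\beta,T)_W$, which is not the analytification of an algebraic object.

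The paper's proof avoids all of this. It takes as input that the algebraic stack $\cM(L^0,\beta)$ (with the tangency conditions of \cref{const:Mbeta} already built in) is proper over $k$, so by \cite[Proposition 6.4]{Porta_Yu_Higher_analytic_stacks_2014} its analytification is a proper \kanal stack and $\ev_3|_{\cM(L^0,\beta)_W}\colon\cM(L^0,\beta)_W\to W^0$ is proper by base change. The only remaining point is then purely topological: since every stable map in $\cM(L^0,\beta)_W$ has its third marked point evaluating into $W^0=\tau_Y\inv(h^0(v^0_1))$, its tropicalization lies in $M^0_{2+1}(B_f)$, so by \cref{prop:tropical_rigidity} and continuity of $\tau^0_\cM$ the subspace $\cM(L^0,\beta,T)_W$ is a union of connected components of $\cM(L^0,\beta)_W$, hence closed, and the restriction of a proper map to it is proper. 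Note also that because the extension happens inside $\cM(L^0,\beta)\an$ itself, one never needs your final (and sketchy) step of re-deriving the boundary tangency conditions for a limit taken in the larger Kontsevich stack; if you want to pursue your route you would in addition have to carry out that analysis with the care of \cref{lem:components_of_the_domain_curve}, but the decisive missing piece is the compactness, which your criterion does not deliver.
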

\begin{proof}
	Let us prove the first statement.
	The same argument applies to the second statement.
	Consider the tropicalization map
	\[\tau^0_\cM\colon\cM(\hL^0,\beta)\an\to M_{2+1}(B_\rf).\]
	Let $[C,(p_1,p_2,p_3),f\colon C\to (Y')\an]$ be the analytic stable map corresponding to a point in $\cM(\hL^0,\beta)_W$.
	Let $[\Gamma,(v_1,v_2,v_3),h\colon\Gamma\to B_\rf]$ be the associated rational tropical curve in $B_\rf$ with two boundary points.
	Since $\cM(\hL^0,\beta)_W=\ev_3\inv(W^0)$ and $W^0=\tau_\rf\inv(h^0(v^0_1))$, we have $\tau_\rf(f(p_3))=h^0(v^0_1)$.
	So $h(v_3)=h^0(v^0_1)$.
	We deduce that
	\[\tau^0_\cM(\cM(\hL^0,\beta)_W)\subset M^0_{2+1}(B_\rf),\]
	where $M^0_{2+1}(B_\rf)$ is defined in \cref{const:tropical_moduli}.
	
	By \cref{prop:tropical_rigidity}, the subset $T^0_\rf\subset M^0_{2+1}(B_\rf)$ is a union of connected components.
	The tropicalization map $\tau^0_\cM$ is continuous by \cite[Theorem 8.1]{Yu_Tropicalization_2014}.
	Therefore, the substack $\cM(\hL^0,\beta,T)_W\subset\cM(\hL^0,\beta)_W$ is a union of connected components; in particular, it is a \kanal stack.
	Since the algebraic stack $\cM(\hL^0,\beta)$ is proper over $k$, by \cite[Proposition 6.4]{Porta_Yu_Higher_analytic_stacks_2014}, its analytification $\cM(\hL^0,\beta)\an$ is a proper \kanal stack.
	So the map
	\[\ev_3\colon\cM(\hL^0,\beta)\an\to(Y')\an\]
	is proper.
	By base change, the map
	\[\ev_3|_{\cM(\hL^0,\beta)_W}\colon\cM(\hL^0,\beta)_W\to W^0\]
	is proper.
	Since the substack $\cM(\hL^0,\beta,T)_W\subset\cM(\hL^0,\beta)_W$ is a union of connected components, we deduce that the map
	\[\ev_3|_{\cM(\hL^0,\beta,T)_W}\colon \cM(\hL^0,\beta,T)_W \to W^0\]
	is proper, completing the proof.
\end{proof}

\begin{lem} \label{lem:components_of_the_domain_curve}
	Any analytic stable map $[C, (\text{marked points}),f\colon C\to(Y')\an]$ in $\cM(\hL^0,\beta,T)$ (resp.\ $\cM^\rd(\beta,T)$) satisfies the following properties:
	\begin{enumerate}
		\item \label{item:components:boundary} There is no irreducible component of $C$ mapping into the boundary $(\partial Y')\an$.
		\item \label{item:components:marked_points} The preimage $f\inv((\partial Y')\an)$ contains exactly the first two (resp.\ four) marked points on $C$.
		\item \label{item:components:unstable} Let $C'$ be an irreducible component of $C$ that contains at most two special points; then $f(C')$ does not meet the boundary $(\partial Y')\an$ and $(\tau\circ f)(C')=O\in B$.
		\item \label{item:components:stabilization} Let $C_\mathrm{m}\subset C$ be the union of irreducible components containing the marked points.
		Let $\st(C)$ be the stabilization of the pointed domain curve (obtained by iterated contractions of non-stable components).
		We have $C_\mathrm{m}=\st(C)$.
	\end{enumerate} 
\end{lem}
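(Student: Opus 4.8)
The plan is to transfer the whole statement to the tropical side via the tropicalization map, and then combine two inputs: the rigidity of the tropical type (\cref{prop:tropical_rigidity} together with the shape of $T^0_f$, resp.\ $T^d_f$), and the nonnegativity of intersection numbers against the toroidal boundary $\partial Y'$, which is forced by the extremely rigid curve class $s(L^0,\beta)$ (resp.\ $s^d(\beta)$). I will write everything for $\cM(L^0,\beta,T)$; the argument for $\cM^d(\beta,T)$ is the same after replacing the spine, its two boundary marked points and the two divisors $D^0_1,D^0_2$ by the double spine, its four boundary marked points and the four divisors $D^1_1,D^1_2,D^2_1,D^2_2$. So, let $\big(C,(s_1,s_2,s_3),f\big)\in\cM(L^0,\beta,T)$ and let $\big(\Gamma,(v_1,v_2,v_3),h\big)\in T^0_f$ be its tropicalization; recall that a single component of $C$ in general contributes a whole subtree of $\Gamma$, coming from the special fibre of an adapted semistable model of that component. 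By construction of $T^0_f$, after deleting balanced $2$-valent vertices the path $\Gamma^s$ from $v_1$ to $v_2$ (which passes through $v_3$, with $h(v_3)=h^0(v^0_1)\in B_f^\circ\setminus O$ by \cref{const:tropical_moduli}) is the extended spine associated to $L^0$ retracted to $B_f$; and by \cref{lem:twig} each connected component ("twig") of the closure of $\Gamma\setminus\Gamma^s$ maps into a ray from $O$ with all its leaves sent to $O$. Since $O$ and the image of $L^0$ lie in $B_f^\circ$ (\cref{const:CB}), the only points of $\Gamma$ lying on $\partial B_f$ are the two $1$-valent boundary marked points $v_1,v_2$, situated in the directions $w^0_1,w^0_2$.

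Next I would exploit positivity. The class $\gamma\in\NE(Y')$ of the stable map lies in $s(L^0,\beta)$, so $\gamma\cdot D^0_1=m^0_1$, $\gamma\cdot D^0_2=m^0_2$, and $\gamma\cdot D=0$ for every other irreducible component $D$ of $\partial Y'$. If some component $C'$ of $C$ had $f(C')\subseteq(\partial Y')\an$, its $\tau_Y$-image would be a positive-length arc of $\partial B_f$ in some direction $w_\rho$, contradicting the previous paragraph (only the two $1$-valent marked points of $\Gamma$ lie over $\partial B_f$, and their components cannot map into $\partial Y'$ by the tangency conditions at $s_1,s_2$); this is assertion (1). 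Hence every intersection number $[f(C')]\cdot D$ is $\ge 0$, and since these sum to $\gamma\cdot D$, only the component carrying $s_1$ meets $D^0_1$ (and then with the full multiplicity $m^0_1$), only the component carrying $s_2$ meets $D^0_2$, and every other component is disjoint from $(\partial Y')\an$. Therefore $f\inv((\partial Y')\an)$ lies over $\partial B_f$, hence is contained in $\{s_1,s_2\}$; since $f(s_1)\in D^0_1$ and $f(s_2)\in D^0_2$ it equals $\{s_1,s_2\}$, which is (2).

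For (3), let $C'$ be any component with $[f(C')]\cdot D=0$ for all boundary divisors $D$ — by the previous paragraph, every component except the two carrying $s_1$ and $s_2$. Then $f(C')$ is a complete curve inside $Y'\setminus\partial Y'\cong X_k$; over $B\setminus O$ the map $\tau$ is an affinoid torus fibration with affinoid fibres, which contain no complete curves, and a proper curve mapping into such a fibration has balanced, legless tropicalization, so $\tau_Y\circ f$ contracts $C'$ to $O$. This gives both conclusions of (3) for all components not carrying $s_1,s_2$, and in particular the component carrying $s_3$ is \emph{not} contracted to $O$ (its image meets $h^0(v^0_1)\ne O$), so its class is not orthogonal to $\partial Y'$ and it must coincide with the $s_1$- or the $s_2$-component. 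Combined with the connectedness of $C$ and the fact that twigs attach only at bending vertices of the spine (which lie over points $\ne O$), this pins down the dual graph of $C$, and one reads off that every marked-point component carries $\ge 3$ special points — so (3) is vacuous on them — and every remaining component carries $\le 2$ special points and is contracted to $O$. Finally (4) follows: by (3) the components with $\le 2$ special points are exactly the non-marked ones, all unstable, whereas the marked-point components carry $\ge 3$ special points; so stabilization contracts precisely the complement of $C_m$, i.e.\ $C_m=\st(C)$.

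The main obstacle is the last step: making the tropical rigidity and the input "complete curves in the interior contract to $O$" interact so as to nail down the dual graph of $C$ — above all, ruling out stable components hiding along the straight stretches of the spine, and verifying that the boundary-marked components cannot degenerate so as to carry only two special points. By contrast (1) and (2) are comparatively soft, flowing at once from nonnegativity of boundary intersection numbers and the shape of $T^0_f$; the real labour is the bookkeeping of which components of $C$ account for which vertices, legs and edges of $\Gamma$, done with the balancing condition and \cref{lem:twig} at each non-marked vertex.
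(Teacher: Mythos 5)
There are genuine gaps, concentrated exactly where the real work of the paper's proof lies. For (\ref{item:components:boundary}), your dichotomy is wrong: a component $C'$ with $f(C')\subset(\partial Y')\an$ need not sweep a positive-length arc of $\partial B_f$ — it can be a bubble mapped to a single point of the boundary. The paper's mechanism is the opposite of yours: since the tropicalization lies in $T^0_f$ (resp.\ $T^d_f$), $h(\Gamma)\cap\partial B_f$ is finite, so $(\tau_Y\circ f)(C')$ is a \emph{point} $b\in\partial B_f$; then $\tau_Y\inv(b)$ is affinoid, so $f$ is constant on that connected piece, and the contradiction comes from stability, using that the corresponding point of the simple tropical curve is one of the $1$-valent boundary-marked vertices, so the piece carries exactly one node and at most one marked point. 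Your parenthetical ``their components cannot map into $\partial Y'$ by the tangency conditions'' does not substitute for this; constant boundary bubbles are untouched by your argument. For (\ref{item:components:unstable}), your claim that \emph{every} component not carrying $s_1,s_2$ is contracted to $O$ is false (a constant component with three or more nodes can sit over any point of $B$), and, more to the point, you never treat the case the statement is really about: a component with at most two special points one of which is a boundary marked point. The paper excludes it by showing that a component meeting $(\partial Y')\an$ in exactly one point has, by the balancing condition applied to that punctured component alone, tropical image inside a ray emanating from $O$, contradicting Condition (3) of \cref{def:spine} built into $T^0_f$, $T^d_f$; and to get the contracted point equal to $O$ one must first invoke stability to rule out $f|_{C'}$ constant — a step you omit.

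For (\ref{item:components:stabilization}) the decisive ingredient is the connectedness of $C_m$, which the paper proves by a separate contradiction argument: a connecting component $C'_n$ joining two components of $C_m$ must, because the tropical type lies in $T^d_f$, map into the affinoid fiber $\tau_Y\inv(h^d(v^d))$, hence be constant, hence meet $C_m$ in at least three points, after which rationality and a case analysis of the three possible components of $C_m$ yield a contradiction. Nothing in your sketch supplies this; ``this pins down the dual graph of $C$'' is asserted, not proved, and your own closing paragraph concedes that ruling out stable components hiding along the spine and two-special-point boundary-marked components is still open — but these are precisely statements (\ref{item:components:unstable}) and (\ref{item:components:stabilization}), not a residual technicality. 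Your derivation of (\ref{item:components:marked_points}) from (\ref{item:components:boundary}) via nonnegativity of local intersection numbers and the class conditions of \cref{const:sbeta} is fine and matches the paper, but the remaining three parts are not established by the proposal as written.
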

\begin{proof}
	Let us prove the lemma in the case of $\cM^\rd(\beta,T)$.
	The proof in the case of $\cM(\hL^0,\beta,T)$ is similar and simpler.
	
	Let $Z=(\Gamma,(v_1,\dots,v_5),h\colon\Gamma\to B_\rf)$ denote the pointed rational tropical curve in $B_\rf$ with four boundary points associated to the stable map.
	Let $C_\rb$ denote the union of the irreducible components of $C$ mapping into the boundary $(\partial Y')\an$.
	Let $C'_\rb$ be a connected component of $C_\rb$.
	Since $Z$ belongs to $T^\rd_\rf$, the intersection $h(\Gamma)\cap\partial B_\rf$ consists of finitely many points in $\partial B_\rf$.
	So $(\tau_\rf\circ f)(C'_\rb)$ is a point in $\partial B_\rf$, which we denote by $b$.
	Since $\tau_\rf\inv(b)$ is an affinoid domain in $(Y')\an$, the image $f(C'_\rb)\subset(Y')\an$ must be a point in $(\partial Y')\an$.
	
	Let $C_\ri$ be the union of the irreducible components of $C$ not contained in $C_\rb$.
	We claim that $C'_\rb\cap C_\ri$ contains exactly one point.
	To see this, let $v\in\set{v_1,\dots,v_4}$ be the vertex of $\Gamma$ corresponding to $C'_\rb$; let $x\in C'_\rb\cap C_\ri$ be an intersection point; and let $K$ be the irreducible component of $C_\ri$ containing $x$.
	If $(\tau_\rf\circ f)(K)\subset\partial B_\rf$, using the fact that $h(\Gamma)\cap\partial B_\rf$ is a finite set, and that the fibers of $\tau_\rf\colon (Y')\an\to B_\rf$ over $\partial B_\rf$ are affinoid, we see that $f(K)$ is a point in $(Y')\an$.
	Since $K\subset C_\ri$, $f(K)$ is a point in $(Y'\setminus\partial Y')\an$.
	Then $C'_\rb$ and $K$ cannot intersect.
	Therefore, $(\tau_\rf\circ f)(K)$ cannot be contained in $\partial B_\rf$.
	So the intersection point $x=C'_\rb\cap K$ gives rise to an edge of $\Gamma$ connected to $v$.
	Since $Z$ belongs to $T^\rd_\rf$, $v$ is a 1-valent vertex.
	Since each intersection point of $C'_\rb\cap C_\ri$ gives rise to a different edge of $\Gamma$ connected to $v$, we conclude that $C'_\rb\cap C_\ri$ contains exactly one point.
	
	Moreover, $Z\in T^\rd_\rf$ implies that $v_1,\dots,v_4$ are four different points on $\Gamma$.
	So $C'_\rb$ contains at most one marked point.
	Let $C_1$ be the union of the irreducible components of $C$ not contained in $C'_\rb$.
	We have $C_1=(C_\rb\setminus C'_\rb)\cup C_\ri$.
	So
	\[C'_\rb\cap C_1=C'_\rb\cap\big((C_\rb\setminus C'_\rb)\cup C_\ri\big)=C'_\rb\cap C_\ri\]
	contains exactly one point.
	Recall that $C'_\rb$ has arithmetic genus zero and contains at most one marked point.
	So there exists an irreducible component $E$ of $C'_\rb$ which contains at most two special points.
	As $f$ is constant on $E$, we get a contradiction to the stability condition of the stable map $f$.
	Therefore, $C_\rb$ must be empty.
	This proves Property (\ref{item:components:boundary}).
	
	Using Property (\ref{item:components:boundary}) and the conditions on the intersections numbers in \cref{const:sbeta} (resp.\ \cref{const:sdbeta}), we obtain Property (\ref{item:components:marked_points}).
	
	Now let $C'$ be the irreducible component of $C$ in Property (\ref{item:components:unstable}).
	In this case, $C$ is reducible, so $C'$ contains at least one node of $C$.
	By Property (\ref{item:components:marked_points}), $f(C')$ can meet the boundary $(\partial Y')\an$ in at most one point.
	If $f(C')$ meets $(\partial Y')\an$ at exactly one point, then we claim that $(\tau_\rf\circ f)(C')$ must contain the intersection between $B_\rf$ and a rational ray starting from $O$ in $B$.
	
	In order to see the claim, note that the retraction map $\tau\colon X_k\an\to B$ extends to a retraction map $\otau\colon(Y'_k)\an\to\oB'$, where $\oB'$ is the closure of $B'$ inside $(Y'_k)\an$ (see \cite[Theorem 4.13]{Gubler_Skeletons_2014}).
	The composite map $C\xrightarrow{f}(Y')\an\xrightarrow{\otau}\oB'$ factors as $C\xrightarrow{\oc}\oGamma\xrightarrow{\oh}\oB'$ where $C\xrightarrow{\oc}\oGamma$ contracts all paths in $C$ that are contracted by $\otau\circ f$.
	If $f(C')$ meets $(\partial Y')\an$ at exactly one point, then there is exactly one point $r$ of $\oGamma$ such that $\oh(r)\in\partial\oB'$.
	Let $\oR$ denote the ray connecting $O$ and $\oh(r)$ in $\oB'$.
	Applying the same argument as in the proof of \cref{lem:twig} to $\oh\colon\oGamma\to\oB'$ with root $r$, we see that the image $\oh(\oGamma)$ is equal to the ray $\oR$.
	Then the image $(\tau_\rf\circ f)(C')\subset B_\rf$ contains the intersection $\oR\cap B_\rf$.
	So the claim holds.
	
	The claim contradicts the fact that the pointed rational tropical curve $Z$ belongs to $T^\rd_\rf$, because the image of any tropical curve in $T^\rd_\rf$ cannot contain such a ray.
	Therefore, $f(C')$ must lie in the interior $(Y'\setminus \partial Y')\an\simeq X\an_k$.
	In this case, by the balancing condition, $(\tau\circ f)(C')$ is a point in $B$.
	By the stability condition, $f|_{C'}$ is nontrivial.
	Moreover, since $\tau\colon X\an_k\to B$ is an affinoid torus fibration outside $O\in B$, the only possibility for the image $(\tau\circ f)(C')$ is the origin $O\in B$.
	This proves Property (\ref{item:components:unstable}).
	
	Let $C_\mathrm{m}$ and $\st(C)$ be as in Property (\ref{item:components:stabilization}).
	Let us first prove that $C_\mathrm{m}$ is connected.
	Assume on the contrary that $C_\mathrm{m}$ is not connected.
	Let $C_\rn\subset C$ be the union of irreducible components of $C$ not contained in $C_\mathrm{m}$.
	Let $C'_\rn$ be a connected component of $C_\rn$ that meets at least two connected components of $C_\mathrm{m}$.
	
	We claim that $(\tau_\rf\circ f)(C'_\rn)$ must be the point $h^\rd(v^\rd)\in B_\rf$.
	To see this, say that $C'_\rn$ meets two connected components $C^1_\mathrm{m}$ and $C^2_\mathrm{m}$ of $C_\mathrm{m}$.
	Let $\Gamma^1_\mathrm{m}$, $\Gamma^2_\mathrm{m}$ and $\Gamma'_\rn$ be the connected subtrees of $\Gamma$ associated to $C^1_\mathrm{m}$, $C^2_\mathrm{m}$ and $C'_\rn$ respectively.
	The restrictions of $h$ to these subtrees are balanced away from $\partial B_\rf\cup O$.
	Note that each of $\Gamma^1_\mathrm{m}$ and $\Gamma^2_\mathrm{m}$ contains some marked points.
	Since $Z$ belongs to $T^\rd_\rf$, it follows from the shape of $Z$ and the balancing condition that $\Gamma'_\rn$ must be the point $v_5$.
	Hence we obtain $(\tau_\rf\circ f)(C'_\rn)=h(\Gamma'_\rn)=h(v_5)=h^\rd(v^\rd)$.
		
	Since $\tau_\rf\inv(h^\rd(v^\rd))$ is affinoid, the image $f(C'_\rn)$ is a point in $X\an_k$.
	Then by the stability condition, the intersection $C'_\rn\cap C_\mathrm{m}$ has at least three points. 
	Since $C$ is rational, we see that $C_\mathrm{m}$ must have at least three connected components.
	Using again the fact that $Z$ belongs to $T^\rd_\rf$, we see that only the following situation is possible: $C_\mathrm{m}$ has exactly three connected components, one containing the last marked point, one containing two among the first four marked points, and one containing the remaining two marked points.
	We denote the three connected components by $C^1_\mathrm{m}$, $C^2_\mathrm{m}$, $C^3_\mathrm{m}$ respectively.
	Since $C$ is rational, $C'_\rn$ intersects each of $C^1_\mathrm{m}$, $C^2_\mathrm{m}$ and $C^3_\mathrm{m}$ exactly once.
	Since $C^1_\mathrm{m}$ has only one marked point, by Property (\ref{item:components:unstable}), $C^1_\mathrm{m}$ must contain two nodes of $C$.
	So there exists a connected component $C''_\rn$ of $C_\rn$ different from $C'_\rn$ that meets $C^1_\mathrm{m}$.
	By the same reasoning as for $C'_\rn$, the intersection $C''_\rn\cap C_\mathrm{m}$ has at least three points.
	However, since $C$ is rational and $C_\mathrm{m}\cup C'_\rn$ is connected, we see that $C''_\rn\cap(C_\mathrm{m}\cup C'_\rn)$ can contain at most one point.
	So we have reached a contradiction.
	Therefore, $C_\mathrm{m}$ is connected.
	
	By Property (\ref{item:components:unstable}), every irreducible component in $C_\mathrm{m}$ contains at least three special points.
	Combining with the fact that $C_\mathrm{m}$ is connected, we see that $C_\mathrm{m}$ is a stable pointed curve.
	Note that for any pointed rational nodal curve $C'\supset C_\mathrm{m}$ whose marked points are all contained in $C_\mathrm{m}$, there exists at least one irreducible component of $C'$ not contained in $C_\mathrm{m}$ which contains at most two nodes.
	Therefore, after iterated contractions of non-stable components of $C$, we will end up with $C_\mathrm{m}$.
	In other words, we have $C_\mathrm{m}=\st(C)$, completing the proof of Property (\ref{item:components:stabilization}).
\end{proof}

\begin{construction}\label{const:tDelta}
	Let $\bcM_{0,5}$ denote the moduli space of 5-pointed rational algebraic stable curves over $k$.
	Let $\tau_{\bcM_{0,5}}\colon\bcM\an_{0,5}\to\oM\trop_{0,5}$ denote the tropicalization map sending pointed analytic stable curves to the associated extended tropical curves (see \cite{Abramovich_Tropicalization_2012}).
	Let $\Delta\subset\oM\trop_{0,5}$ be as in \cref{const:Delta}.
	We set $\tDelta\coloneqq\tau_{\bcM_{0,5}}\inv(\Delta)\subset\bcM\an_{0,5}$.
\end{construction}

\begin{prop}\label{prop:properness_d}
Let
\[\Phi=(\mathrm{st},\ev_5)\colon\cM^\rd(\beta)\an\to\bcM\an_{0,5}\times (Y')\an\]
be the map given by stabilization of domain curves and the evaluation map of the fifth marked point.
Let
\[\cM^\rd(\beta)_W\coloneqq\Phi\inv(\tDelta\times W^\rd),\]
\[\cM^\rd(\beta,T)_W\coloneqq\cM^\rd(\beta)_W\cap\cM^\rd(\beta,T).\]
Then $\cM^\rd(\beta,T)_W$ is a \kanal stack, and the restriction
\[\Phi|_{\cM^\rd(\beta,T)_W} \colon \cM^\rd(\beta,T)_W \to\tDelta\times W^\rd\]
is a proper map.
\end{prop}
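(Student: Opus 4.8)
The plan is to follow the blueprint of the proof of \cref{prop:properness}, now feeding in the statement of \cref{prop:tropical_rigidity} concerning $T^d_f\subset M^d_{4+1}(B_f)$ together with the structural results of \cref{lem:components_of_the_domain_curve}. I will establish two things: first, that $\Phi|_{\cM^d(\beta)_W}\colon\cM^d(\beta)_W\to\tDelta\times W^d$ is proper; second, that $\cM^d(\beta,T)_W$ is a union of connected components of $\cM^d(\beta)_W$. Granting both, the assertion follows, since the restriction of a proper map to an open and closed subspace of its source is again proper.

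For the first point: the algebraic stack $\cM^d(\beta)$ of $5$-pointed rational stable maps into $Y'$ of class in $s^d(\beta)$, with the first four marked points constrained to $D^1_1,D^1_2,D^2_1,D^2_2$ with the prescribed multiplicities, is proper over $k$, being a closed substack of a Kontsevich stack of stable maps of fixed numerical class (cf.\ \cite{Kontsevich_Enumeration_1995,Fulton_Stable_1997}). Hence its analytification $\cM^d(\beta)\an$ is a proper \kanal stack by \cite[Proposition 6.4]{Porta_Yu_Higher_analytic_stacks_2014}. The algebraic stabilization and fifth-point evaluation morphisms assemble into a morphism of proper $k$-stacks $\cM^d(\beta)\to\bcM_{0,5}\times Y'$, which is therefore proper, and analytifying shows that $\Phi$ is proper. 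Since $\Delta$ is closed in $\oM\trop_{0,5}$ and $\tau_{\bcM_{0,5}}$ is continuous, $\tDelta$ is closed, hence compact, in $\bcM\an_{0,5}$; likewise $W^d$ is an affinoid domain, hence compact. Base-changing $\Phi$ along the closed immersion $\tDelta\times W^d\hookrightarrow\bcM\an_{0,5}\times(Y')\an$ then shows that $\Phi|_{\cM^d(\beta)_W}$ is proper (in particular $\cM^d(\beta)_W$ is compact).

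For the second point, the crucial step is to check that $\tau_{\cM^d}$ maps $\cM^d(\beta)_W$ into the locus $M^d_{4+1}(B_f)\subset M_{4+1}(B_f)$ of \cref{const:tropical_moduli}. Let $\bigl(C,(p_1,\dots,p_5),f\colon C\to(Y')\an\bigr)$ lie in $\cM^d(\beta)_W$, with associated tropical curve $Z=(\Gamma,(v_1,\dots,v_5),h)$. Condition (1) of $M^d_{4+1}(B_f)$, namely $h(v_5)=h^d(v^d)$, is immediate from $\ev_5(p_5)\in W^d=\tau_Y\inv(h^d(v^d))$. For Condition (2)---that $P_{13}$ and $P_{24}$ meet and that $v_5$ lies in their intersection---one uses the constraint $\st(C)\in\tDelta$: by \cref{const:Delta} and the description of the graphs $G,G'$ in \cref{const:graphs}, the combinatorial type of the stabilized domain $\st(C)$ is that of $G$, of $G'$, or their common degeneration, with the five legs $l^1_1,l^1_2,l^2_1,l^2_2,l_5$ matching $p_1,\dots,p_5$ (which map to $D^1_1,D^1_2,D^2_1,D^2_2$ and $W^d$ respectively). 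In each of these types the vertex carrying $l_5$ separates the pair $(p_1,p_3)$ and the pair $(p_2,p_4)$; combining this with \cref{lem:components_of_the_domain_curve}(\ref{item:components:stabilization}) (identifying $\st(C)$ with $C_m$) and the balancing conditions of \cref{def:rational_tropical_curve_in_Bf} transfers the separation property to $Z$, so that $v_5\in P_{13}\cap P_{24}$ and this intersection is nonempty, whence $Z\in M^d_{4+1}(B_f)$. Now by \cref{prop:tropical_rigidity} (the statement for $T^d_f$), $T^d_f$ is a union of connected components of $M^d_{4+1}(B_f)$; since $\tau_{\cM^d}|_{\cM^d(\beta)_W}$ is continuous and lands in $M^d_{4+1}(B_f)$, the preimage $\cM^d(\beta,T)_W=\cM^d(\beta)_W\cap(\tau_{\cM^d})\inv(T^d_f)$ is open and closed in $\cM^d(\beta)_W$, as required.

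I expect the main obstacle to be exactly the inclusion $\tau_{\cM^d}(\cM^d(\beta)_W)\subset M^d_{4+1}(B_f)$, i.e.\ Condition (2): one must pass carefully from the abstract combinatorial type of the stabilized domain (controlled by $\tDelta$) to the parametrized tropical curve $Z$ inside the truncated base $B_f$, and treat with care the degenerate stratum of $\Delta$ at which $\st(C)$ becomes irreducible, using the balancing conditions together with the structural properties recorded in \cref{lem:components_of_the_domain_curve}. Everything else---properness of $\cM^d(\beta)$, preservation of properness under analytification, compactness of $\tDelta$ and $W^d$, base change, and the clopen-restriction principle---is standard or already available from the cited results.
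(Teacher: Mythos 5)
Your proposal is correct and follows essentially the same route as the paper: reduce to showing $\tau_{\cM^d}(\cM^d(\beta)_W)\subset M^d_{4+1}(B_f)$ (the point constraint from $W^d$ giving $h(v_5)=h^d(v^d)$, and the $\tDelta$-constraint together with \cref{lem:components_of_the_domain_curve}(\ref{item:components:stabilization}) giving $v_5\in P_{13}\cap P_{24}$), then invoke \cref{prop:tropical_rigidity} to see $\cM^d(\beta,T)_W$ is a union of connected components of $\cM^d(\beta)_W$, and conclude from properness of $\cM^d(\beta)$, analytification, and base change. The only cosmetic difference is the order of the two steps, which is immaterial.
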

\begin{proof}
	Consider the tropicalization map
	\[\tau_{\cM^\rd}\colon\cM^\rd(\beta)\an\to M_{4+1}(B_\rf).\]
	Let $[C,(p_1,\dots,p_5),f\colon C\to (Y')\an]$ be the analytic stable map corresponding to a point in $\cM^\rd(\beta)_W$.
	Let $[\Gamma,(v_1,\dots,v_5),h\colon\Gamma\to B_\rf]$ be the associated pointed rational tropical curve in $B_\rf$ with four boundary points.
	Since $\cM^\rd(\beta)_W=\Phi\inv(\tDelta\times W^\rd)$ and $W^\rd=\tau_\rf\inv(h^\rd(v^\rd))$, we have $\tau_\rf(f(p_5))=h^\rd(v^\rd)$.
	So $h(v_5)=h^\rd(v^\rd)$.
	
	Let $\st(C)$ denote the stabilization of the 5-pointed curve $(C,(p_1,\dots,p_5))$.
	Since $\cM^\rd(\beta)_W=\Phi\inv(\tDelta\times W^\rd)$ and $\tDelta=\tau\inv_{\bcM_{0,5}}(\Delta)$, we have $\tau_{\bcM_{0,5}}(\st(C))\in\Delta$.
	Let $\Gamma^s$ be the convex hull of $v_1,\dots,v_4$ in $\Gamma$.
	By \cref{lem:components_of_the_domain_curve}(\ref{item:components:stabilization}), $\Gamma^s$ is a contraction (in terms of graph) of $\tau_{\bcM_{0,5}}(\st(C))$.
	So by the definition of $\Delta$, the path connecting $v_1,v_3$ and the path connecting $v_2,v_4$ in $\Gamma^s$ intersect, and the vertex $v_5$ lies in the intersection.
	Combining with the equality $h(v_5)=h^\rd(v^\rd)$ shown in the last paragraph, we deduce that
	\[\tau_{\cM^\rd}(\cM^\rd(\beta)_W)\subset\cM^\rd_{4+1}(B_\rf),\]
	where $\cM^\rd_{4+1}(B_\rf)$ is defined in \cref{const:tropical_moduli}.
	
	By \cref{prop:tropical_rigidity}, the subset $T^\rd_\rf\subset M^\rd_{4+1}(B_\rf)$ is a union of connected components.
	The tropicalization map $\tau_{\cM^\rd}$ is continuous by \cite[Theorem 8.1]{Yu_Tropicalization_2014}.
	Therefore, the substack $\cM^\rd(\beta,T)_W\subset\cM^\rd(\beta)_W$ is a union of connected components; in particular, it is a \kanal stack.
	Since the algebraic stack $\cM^\rd(\beta)$ is proper over $k$, by \cite[Proposition 6.4]{Porta_Yu_Higher_analytic_stacks_2014}, its analytification $\cM^\rd(\beta)\an$ is a proper \kanal stack.
	So the map
	\[\Phi\colon\cM^\rd(\beta)\an\to\bcM\an_{0,5}\times(Y')\an\]
	is proper.
	By base change, the map
	\[\Phi|_{\cM^\rd(\beta)_W}\colon \cM^\rd(\beta)_W\to\tDelta\times W^\rd\]
	is proper.
	Since the substack $\cM^\rd(\beta,T)_W\subset\cM^\rd(\beta)_W$ is a union of connected components, we deduce that the map
	\[\Phi|_{\cM^\rd(\beta,T)_W} \colon \cM^\rd(\beta,T)_W \to\tDelta\times W^\rd\]
	is also proper, completing the proof.
\end{proof}

\section{Smoothness of moduli spaces}\label{sec:smoothness}

In \cref{sec:analytic_constructions}, we have shown the properness of various evaluation maps.
In this section, we study the smoothness of moduli spaces.
The main result is \cref{prop:finite_etale}, which shows that after a small deformation of the target log Calabi-Yau surface, the evaluation maps become generically finite étale.
As a consequence, we obtain the positivity and integrality theorem.

\begin{prop}\label{prop:smoothness}
	Let $\cM(\hL^0,\beta)_0$ denote the substack of $\cM(\hL^0,\beta)$ consisting of stable maps $[C,(s_1,s_2,s_3),f\colon C\to Y']$ satisfying the following conditions:
	\begin{enumerate}
		\item The domain curve $C$ is irreducible.
		\item \label{item:smoothness:boundary} The preimage $f\inv(\partial Y')$ is a Cartier divisor supported at the two marked points $s_1$ and $s_2$.
	\end{enumerate}
	Then there exists a nonempty Zariski open subset $V\subset Y'$ over which the evaluation map
	\[\ev_3\colon\cM(\hL^0,\beta)_0\to Y'\]
	is smooth.
	
	Similarly, we define the substack $\cM^\rp(\hL^0,\beta)_0\subset\cM^\rp(\hL^0,\beta)$.
	Then there exists a nonempty Zariski open subset $V\subset\tY$ over which the evaluation map
	\[\ev_3\colon\cM^\rp(\hL^0,\beta)_0\to\tY\]
	is smooth.
\end{prop}
\begin{proof}
		Let us prove the statement for the stack $\cM(\hL^0,\beta)$.
	The same arguments work for the stack $\cM^\rp(\hL^0,\beta)$.
	Let $x$ be a closed point in $\cM(\hL^0,\beta)_0$ and let $[C,(s_1,s_2,s_3),f\colon C\to Y']$ be the corresponding stable map.
	Let $E$ denote the vector bundle $f^*T_{Y'}(-\log\partial Y')$.
	The derivative $d\ev_3$ of the evaluation map $\ev_3$ at $x$ is given by the map
	\[ H^0(C,E)\longrightarrow f^*(T_{Y'})_{s_3}.\]
	By Condition (\ref{item:smoothness:boundary}), the image of $s_3$ does not lie in the boundary $\partial Y'$.
	So we have
	\[f^*(T_{Y'})_{s_3}\simeq E_{s_3}.\]
	Assume that the derivative $d\ev_3$ is surjective.
	Then the vector bundle $E$ is globally generated at the point $s_3$.
	Since $C$ is a projective line, the vector bundle $E$ is semi-positive, i.e.\ it is isomorphic to a direct sum of line bundles $\cO(i)$ with $i\ge 0$.
	Therefore, the first cohomology $H^1(C,E)$ is zero.
	So by the dimension estimate in \cite[Proposition 5.3]{Keel_Rational_curves_1999}, the moduli space $\cM(\hL^0,\beta)_0$ is smooth at $x$.
	By the surjectivity of the derivative $d\ev_3$ again, we deduce that the evaluation map $\ev_3$ is smooth at $x$.
	Now let $\cM'$ be the set of closed points in $\cM(\hL^0,\beta)_0$ where the derivative $\rd \ev_3$ is not surjective.
	By \cite[Proposition III.10.6]{Hartshorne_Algebraic_geometry_1977}, the Zariski closure of the image $\ev_3(\cM')$ has dimension less than or equal to one.
	So we can take $V$ to be the complement, completing the proof of the proposition.
\end{proof}

\begin{lem}\label{lem:trivial_bundle}
	Let $C$ be a nodal curve of arithmetic genus 0.
	Let $E$ be a vector bundle on $C$ such that $c_1(E)\cdot C'=0$ for every irreducible component $C'$ of $C$.
	Then $E$ is globally generated at a point $p\in C$ if and only if $E$ is a trivial vector bundle.
\end{lem}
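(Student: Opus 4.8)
The ``if'' direction is immediate, since a trivial bundle $\cO_C^{\oplus r}$ is generated at every point of $C$ by its $r$ tautological sections; so the content is the converse. The plan is as follows. Since $C$ is connected of arithmetic genus $0$, every irreducible component of $C$ is isomorphic to $\bbP^1$ and the dual graph of $C$ is a (connected) tree. Set $r=\rk E$. Using that $E$ is globally generated at $p$, I would choose $r$ global sections $\sigma_1,\dots,\sigma_r\in H^0(C,E)$ whose values at $p$ form a basis of the fiber of $E$ at $p$, and assemble them into a morphism $\psi\colon\cO_C^{\oplus r}\to E$; by construction $\psi$ is an isomorphism on the fiber at $p$. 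The whole proof then reduces to showing that $\psi$ is an isomorphism of sheaves on $C$, for then $E\cong\cO_C^{\oplus r}$.

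The key local observation is: if $C'\subset C$ is an irreducible component and $\psi|_{C'}\colon\cO_{C'}^{\oplus r}\to E|_{C'}$ is an isomorphism on the fiber over a single point $x\in C'$, then $\psi|_{C'}$ is an isomorphism over all of $C'$. Indeed, passing to top exterior powers, $\det(\psi|_{C'})$ is a global section of the line bundle $\det(E|_{C'})$; by hypothesis $\deg\det(E|_{C'})=c_1(E)\cdot C'=0$, so $\det(E|_{C'})\cong\cO_{\bbP^1}$ and this section is a constant, which is nonzero because $\psi|_{C'}$ is an isomorphism at $x$. A nowhere-vanishing determinant means $\psi|_{C'}$ is an isomorphism everywhere on $C'$.

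It then remains to propagate this over $C$ using connectedness. On the component $C_0$ containing $p$, the map $\psi|_{C_0}$ is an isomorphism on the fiber at $p$, hence an isomorphism on all of $C_0$ by the previous step. If $C_1$ is any component meeting, at a node $q$, a component over which $\psi$ has already been shown to be an isomorphism, then $\psi$ is an isomorphism on the fiber at $q$, so in particular $\psi|_{C_1}$ is an isomorphism at $q\in C_1$, hence over all of $C_1$. Since the dual graph of $C$ is connected, after finitely many steps $\psi$ restricts to an isomorphism over every irreducible component of $C$. To conclude that $\psi$ is an isomorphism of $\cO_C$-modules, one checks the nodes: at a node $q$ on components $C_i,C_j$ we have $\cO_{C,q}\cong k\llb x,y\rrb/(xy)$, and an element of this local ring is a unit if and only if its images in the two branches $k\llb x\rrb$, $k\llb y\rrb$ are units; applying this to $\det\psi\in\cO_{C,q}$ — a unit on each branch since $\psi|_{C_i}$ and $\psi|_{C_j}$ are isomorphisms — shows $\psi_q$ is an isomorphism.

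I do not expect a serious obstacle here; the argument is elementary. The one step to be careful with is the last one, passing from ``isomorphism over each irreducible component'' to ``isomorphism of sheaves on the nodal curve'' across the nodes, and — more substantively — the reliance on the hypothesis $c_1(E)\cdot C'=0$: this is exactly what makes $\det(E|_{C'})$ trivial, so that nonvanishing of $\det(\psi|_{C'})$ at one point of $C'$ forces it to be nonvanishing on all of $C'$. Dropping this balancing assumption, $\det(\psi|_{C'})$ could be a non-constant section vanishing somewhere on $C'$ and the statement would fail.
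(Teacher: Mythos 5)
Your argument is correct, and it shares the paper's overall structure (work component by component and propagate through the nodes of the tree of $\bbP^1$'s), but the component-level step is different. The paper restricts $E$ to a component $C'$ and invokes the Grothendieck splitting on $\bbP^1$: generation at one point rules out negative summands, and $c_1(E)\cdot C'=0$ then forces $E|_{C'}\cong\cO^{\oplus r}$, so that evaluation $H^0(C',E|_{C'})\to E_q$ is an isomorphism at every $q\in C'$ and the generating sections at $p$ keep generating at the nodes. You instead fix once and for all a map $\psi\colon\cO_C^{\oplus r}\to E$ from sections generating $E_p$ and use the determinant: $\det(\psi|_{C'})$ is a section of the degree-zero line bundle $\det(E|_{C'})$, hence nowhere vanishing as soon as it is nonzero at one point. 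This avoids the splitting theorem (and would even work if a component had positive genus, which is moot here), at the cost of carrying the bundle map along. You also make explicit the last step that the paper leaves implicit, namely that an isomorphism on every fiber, checked across the nodes, gives an isomorphism of sheaves; your unit-of-the-determinant check in the local ring at a node (or, even more directly, the observation that $\det\psi$ is a unit in $\cO_{C,q}$ because it is nonzero in the residue field) settles this correctly. Both routes are sound; yours is marginally more elementary and more self-contained, the paper's is shorter because it quotes the semipositivity consequence of pointwise generation on $\bbP^1$.
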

\begin{proof}
	Note that $C$ is a tree of projective lines.
	Let $C'\subset C$ be an irreducible component of $C$ and let $p\in C'$ be a point.
	Assume that $E$ is globally generated at $p$.
	Then $E|_{C'}$ is a direct sum of line bundles $\cO(i)$ with $i\ge 0$.
	Since $c_1(E)\cdot C'=0$, the vector bundle $E|_{C'}$ is trivial.
	Thus $H^0(C',E|_{C'})\to E_q$ is an isomorphism for all $q\in C'$.
	So the sections in $H^0(C,E)$ which generate $E_p$ also generate $E_q$ for all $q\in C'$, in particular for any nodal point of $C$ contained in $C'$.
	Now we can propagate the argument over the full tree and deduce that $E$ is a trivial vector bundle.
	The proof of the other direction is obvious.
\end{proof}

\begin{prop}\label{prop:smoothness_d}
	Let $\cM^\rd(\beta)_0$ denote the substack of $\cM^\rd(\beta)$ consisting of stable maps $[C,(s_1,\dots,s_5),f\colon C\to Y']$ satisfying the following conditions:
	\begin{enumerate}
		\item The domain curve $[C,(s_1,\dots,s_5)]$ is a stable 5-pointed curve.
		\item The preimage $f\inv(\partial Y')$ is a Cartier divisor supported at the first four marked points.
	\end{enumerate}
	Let $\cM^\rd_\sm(\beta)_0\subset\cM^d(\beta)_0$ denote the open substack consisting of stable maps such that $f^*T_{Y'}(-\log\partial Y')$ is a trivial vector bundle on $C$.
	The following hold:
	\begin{enumerate}
		\item The map
		\[\Phi\coloneqq(\mathrm{st},\ev_5)\colon\cM^\rd(\beta)_0\longrightarrow\bcM_{0,5}\times Y'\]
		is smooth over $\cM^\rd_\sm(\beta)_0$.
		\item Given any closed point $\mu\in\bcM_{0,5}$, let
		\[\Phi_\mu\colon\cM^\rd(\beta)_{0,\mu}\to Y'\]
		denote the restriction of the map $\Phi$ to the fibers over $\mu$.
		There exists a Zariski dense open subset $V_\mu\subset Y'$ such that
		\[\Phi_\mu\inv(V_\mu)\subset\cM^\rd_\sm(\beta)_0.\]
	\end{enumerate}
\end{prop}
\begin{proof}
	Fix any closed point $\mu\in\bcM_{0,5}$.
	Let $x\in\cM^\rd(\beta)_0$ be a closed point over $\mu$.
	Let $[C,(s_1,\dots,s_5),f\colon C\to Y']$ be the corresponding stable map.
	Let $E$ denote the vector bundle $f^*T_{Y'}(-\log\partial Y')$.
	The derivative $d\Phi_\mu$ at the point $x$ is given by the map
	\[H^0(C,E)\to f^*(T_{Y'})_{s_5}\simeq E_{s_5}.\]
	Assume that $x$ lies in $\cM^\rd_\sm(\beta)_0$.
	In this case, $E$ is a trivial vector bundle.
	Hence the derivative $d\Phi_\mu$ is surjective at $x$.
	Moreover, the first cohomology $H^1(C,E)$ vanishes.
	
	By \cite[Chapter II Theorem 1.7]{Kollar_Rational_curves_1996}, the dimension of $\cM^\rd(\beta)_0$ at the point $x$ is at least
	\[h^0(C,E)-h^1(C,E)+\dim\bcM_{0,5}=h^0(C,E)+\dim\bcM_{0,5}.\]
	Note that the dimension of the Zariski tangent space of $\cM^\rd(\beta)_{0,\mu}$ at $x$ is at most $h^0(C,E)$.
	Moreover, the dimension of the Zariski tangent space of $\cM^\rd(\beta)_0$ at $x$ is at most the dimension of the Zariski tangent space of $\cM^\rd(\beta)_{0,\mu}$ at $x$ plus the dimension of the Zariski tangent space of $\bcM_{0,5}$ at $m$.
	Therefore, the dimension of the Zariski tangent space of $\cM^\rd(\beta)_0$ at $x$ equals the dimension of the space $\cM^\rd(\beta)_0$ itself at $x$.
	So $\cM^\rd(\beta)_0$ is smooth at $x$.
	Furthermore, the surjectivity of the derivative $d\Phi_\mu$ at $x$ implies the surjectivity of the derivative $d\Phi$ at $x$.
	Thus the map $\Phi$ is smooth at $x$.
	This shows the first statement in the proposition.
	
	Next we drop the assumption that $x$ lies in $\cM^\rd_\sm(\beta)_0$.
	If the derivative $d\Phi_\mu$ is surjective at $x$, then the vector bundle $E$ is globally generated at the point $s_5$; hence by \cref{lem:trivial_bundle}, $E$ is trivial.
	In other words, $\cM^\rd_\sm(\beta)_{0,\mu}\subset\cM^\rd(\beta)_{0,\mu}$ is exactly the locus where the derivative $d\Phi_\mu$ is surjective.
	Hence by \cite[Proposition III.10.6]{Hartshorne_Algebraic_geometry_1977}, the Zariski closure of the image $\Phi_\mu\big(\cM^\rd(\beta)_{0,\mu}\setminus\cM^\rd_\sm(\beta)_{0,\mu}\big)$ has dimension less than the dimension of $Y'$.
	This shows the second statement in the proposition.
\end{proof}

\begin{prop}\label{prop:deformation}
	Given any Looijenga pair $(Y,D)$, there is a deformation $\cY$ of $Y$ preserving $D$ such that $\cY$ does not contain any proper curve disjoint from $D$.
\end{prop}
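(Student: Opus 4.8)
\emph{Proof proposal.} The plan is to translate ``$\cY$ contains no proper curve disjoint from $D$'' into a statement about effective classes, and then to achieve it by passing to a general member of the deformation family of the pair $(Y,D)$.

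First I would reformulate. If $\cY$ carries $D=\sum_i D_i$ as an anticanonical cycle and $C\subset\cY$ is a proper curve disjoint from $D$, then $C\cdot D_i=0$ for every $i$, so $C\cdot K_\cY=-C\cdot D=0$; conversely, a nonzero effective class $\beta$ with $\beta\cdot D_i=0$ for all $i$, after one discards from a representative the components supported on $D$, gives an effective divisor which, still meeting each $D_i$ in the empty set, is genuinely disjoint from $D$ (for the pairs at hand $\Lambda\coloneqq\{x: x\cdot D_i=0\ \forall i\}$ is negative definite, so no nonzero class in $\Lambda$ is supported on $D$ and there is nothing to discard). By adjunction an irreducible such curve satisfies $C^2=2p_a(C)-2$; an effective class in $\Lambda$ is a non-negative combination of classes $\alpha$ with $\alpha\cdot K_\cY=0$ and $\alpha^2=-2$, i.e.\ of internal $(-2)$-curves, of which — since $\Lambda$ is negative definite — there are only finitely many. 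Hence the task reduces to destroying a fixed finite list $\alpha_1,\dots,\alpha_m$ of potential internal $(-2)$-classes by a deformation of $(Y,D)$.

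Next I would set up the deformation space. Deformations of the pair $(Y,D)$ keeping $D$ are governed by $H^1(Y,T_Y(-\log D))$, with obstructions in $H^2(Y,T_Y(-\log D))$. For a log Calabi--Yau surface pair this obstruction space vanishes: $\omega_Y\cong\cO_Y(-D)$ and, the rank-two bundle $T_Y(-\log D)$ having trivial determinant, $\Omega^1_Y(\log D)\cong T_Y(-\log D)$, so Serre duality identifies $H^2(Y,T_Y(-\log D))$ with the dual of $H^0(Y,T_Y(-\log D)(-D))=0$. Thus the deformation functor is smooth; let $\cY\to S$, $\cD\subset\cY$ be a versal family. (Concretely one may instead use a toric model $Y\to\oY$ realizing $Y$ as a blowup of a toric pair at points $p_1,\dots,p_k$ lying on $\oD$, and deform by moving the $p_j$ along $\oD$.)

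Finally, the non-persistence step. Fix $\alpha\in\{\alpha_1,\dots,\alpha_m\}$. The locus $S_\alpha\subset S$ over which $\alpha$ is effective is closed by upper semicontinuity of $h^0$, and over it $\alpha$ is represented by a unique curve $A_s\cong\bbP^1$ with $N_{A_s/\cY_s}\cong\cO_{\bbP^1}(-2)$. To see $S_\alpha\neq S$, observe that the obstruction to extending such an $A$ over a first-order deformation of $(Y,D)$ with Kodaira--Spencer class $\kappa\in H^1(Y,T_Y(-\log D))$ is the image of $\kappa$ under the restriction map $H^1(Y,T_Y(-\log D))\to H^1(A,N_{A/Y})\cong\C$, and this map is surjective — equivalently, the period map of anticanonical pairs is transverse to the wall $\alpha^\perp$ — which one reads off from the long exact sequence of $0\to T_Y(-\log D)(-A)\to T_Y(-\log D)\to T_Y(-\log D)|_A\to 0$ together with $H^2(Y,T_Y(-\log D))=0$. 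So a general deformation destroys $A$ and $S_\alpha\subsetneq S$ is a proper closed subset. Since the list is finite, $S\setminus\bigcup_j S_{\alpha_j}$ is a nonempty open subset of the smooth base, and any $\cY=\cY_s$ with $s$ in it is a deformation of $Y$ preserving $D$ with no nonzero effective class in $\Lambda$, hence with no proper curve disjoint from $D$. The main obstacle is precisely this transversality — proving the $H^1$-restriction map is nonzero for every internal $(-2)$-curve — together with the reduction in the first step, where one genuinely uses structural properties of Looijenga pairs (negative-definiteness of $\Lambda$, hence finiteness of the relevant classes).
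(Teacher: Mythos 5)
Your reduction to a finite list of internal $(-2)$-classes rests on the claim that $\Lambda=D^\perp$ is negative definite, and this is false for a general Looijenga pair; it is essentially a feature of the positive case only. Concretely, let $Y$ be a rational elliptic surface and $D$ an $I_l$ fibre (a cycle of $(-2)$-curves): this is a Looijenga pair, the fibre class lies in $D^\perp$ and is isotropic, and every other fibre --- in particular every smooth elliptic fibre, with $C^2=0$ and $p_a(C)=1$ --- is a proper curve disjoint from $D$ whose class is not a non-negative combination of $(-2)$-classes. More generally, whenever the sublattice spanned by the components $D_i$ is negative definite (e.g.\ $D$ irreducible with $D^2<0$), the Hodge index theorem forces $D^\perp$ to have signature $(1,\ast)$, so the classes that could support curves disjoint from $D$ are neither finite in number nor exhausted by $(-2)$-classes. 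Hence the step ``destroy finitely many classes, each supported on a proper closed locus, and take the complement of their union'' does not prove the proposition as stated.

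The repair is to treat all of the countably many classes $\alpha\in D^\perp$ simultaneously, which is what the paper does: restriction of line bundles gives a period map $\operatorname{Def}(Y,D)\to\Hom(D^\perp,\GmC)$ which is a local analytic isomorphism (Gross--Hacking--Keel, Looijenga); a proper curve disjoint from $D$ restricts to the trivial line bundle on $D$, so its class $\alpha$ forces the period point into the subtorus $T_\alpha=\{\psi:\psi(\alpha)=1\}$; and one chooses the deformation in the complement of $\bigcup_{\alpha}T_\alpha$, which is nonempty because a countable union of proper subtori cannot contain a nonempty open subset. Your transversality statement (surjectivity of $H^1(Y,T_Y(-\log D))\to H^1(A,N_{A/Y})$) is essentially the differential of this period map in the direction transverse to the wall $\alpha^\perp$, so your deformation-theoretic input is the right one --- although the exact sequence you invoke requires the vanishing of $H^2(Y,T_Y(-\log D)(-A))$ rather than of $H^2(Y,T_Y(-\log D))$ --- but without replacing the finite list of $(-2)$-classes by the countable family of subtori, the argument only covers a restricted class of pairs and not the general statement.
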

\begin{proof}
	Denote
	\[D^\perp\coloneqq\Set{\alpha\in\Pic(Y) | \alpha\cdot[D']=0\text{ for every irreducible component } D' \text{ of } D}.\]
	Let $\Pic^0(D)\subset\Pic(D)$ denote the subset consisting of line bundles on $D$ of degree 0 on every irreducible component of $D$.
	Fix a cyclic ordering of the irreducible components of $D$.
	By \cite[Lemma 2.1(1)]{Gross_Moduli_of_surfaces_2015}, it induces an identification $\Pic^0(D)=\GmC$.
	
	Let $\cY$ be a deformation of $Y$ preserving $D$.
	Restriction of line bundles gives a homomorphism
	\[\widetilde{\phi}_{\cY}\colon\Pic(\cY)\to\Pic(D).\]
	Since $\Pic(Y)\simeq H^2(Y,\Z)$, the lattice $\Pic(\cY)$ can be canonically identified with $\Pic(Y)$, together with the intersection form and the classes of the irreducible components of $D$.
	So $\widetilde{\phi}_{\cY}$ induces a homomorphism
	\[\phi_{\cY}\colon D^\perp\to\Pic^0(D)=\GmC.\]
	
	Let $\operatorname{Def}(Y,D)$ denote the versal deformation space of $Y$ preserving $D$.
	The construction above gives a local period mapping
	\[\phi\colon\operatorname{Def}(Y,D)\to\Hom(D^\perp,\GmC),\]
	which is a local analytic isomorphism (see \cite[Proposition 4.1]{Gross_Moduli_of_surfaces_2015} and \cite[II.2.5]{Looijenga_Rational_1981}).
	
	For each $\alpha\in D^\perp\setminus 0$, let $T_\alpha\subset\Hom(D^\perp,\GmC)$ be the subtorus consisting of $\psi\in\Hom(D^\perp,\GmC)$ such that $\psi(\alpha)=1$.
	Let $\cY$ be a deformation of $Y$ in the preimage
	\[\phi\inv\Biggl(\Hom(D^\perp,\GmC)\setminus\biggl(\bigcup_{\alpha\in D^\perp\setminus 0} T_\alpha\biggr)\Biggr).\]
	By construction, there is no nontrivial line bundle $\alpha$ on $\cY$ in $D^\perp$ whose restriction to $D$ is trivial.
	In particular, there are no proper curves in $\cY$ disjoint from $D$.
\end{proof}

By \cite[Theorem 5.4]{Yu_Enumeration_cylinders_2015}, the moduli stack responsible for the enumeration of holomorphic cylinders is a union of connected components of the moduli stack of stable maps (with certain incidence conditions).
Moreover, the virtual fundamental class we use is the restriction of the Gromov-Witten virtual fundamental class.
Therefore, the deformation invariance of Gromov-Witten invariants implies that the enumeration of holomorphic cylinders for a Looijenga pair $(Y,D)$ is invariant under deformations of $Y$ preserving $D$.
Thus, in virtue of \cref{prop:deformation}, we can assume from now on the following:

\begin{assumption} \label{assum:deformation}
	We assume that for the Looijenga pair $(Y,D)$, the projective surface $Y$ does not contain any proper curve disjoint from $D$.
\end{assumption}

The main point of the following two lemmas is that under the assumption above, the analytic stable maps with desired tropicalizations will have stable domains.

\begin{lem}\label{lem:stable_domain}
	Let $\cM(\hL^0,\beta)_0$ and $\cM^\rd(\beta)_0$ be as in Propositions \ref{prop:smoothness} and \ref{prop:smoothness_d} respectively.
	We have $\cM(\hL^0,\beta,T)\subset\cM(\hL^0,\beta)\an_0$ and $\cM^\rd(\beta,T)\subset\cM^\rd(\beta)\an_0$.
\end{lem}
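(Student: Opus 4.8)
The plan is to deduce both inclusions directly from \cref{lem:components_of_the_domain_curve}, the only genuinely new input being the standing assumption that $Y$ contains no proper curve disjoint from $D$. The conditions defining $\cM(L^0,\beta)_0\subset\cM(L^0,\beta)$ and $\cM^d(\beta)_0\subset\cM^d(\beta)$ — irreducibility (resp.\ Deligne--Mumford stability) of the pointed domain, together with the statement that $f^*(\partial Y')$ is a Cartier divisor supported at the distinguished marked points — are algebraic conditions, so analytification is compatible with passing to these substacks and it suffices to verify the inclusions on points. A point of $\cM(L^0,\beta,T)$ (resp.\ $\cM^d(\beta,T)$) is given by an analytic stable map $\big(C,(s_i),f\colon C\to(Y')\an\big)$; since $Y'$ is proper over $k$, by GAGA this is the analytification of an algebraic stable map over some non-archimedean field extension $K$ of $k$, and I would work with that algebraic stable map, noting $K\supseteq k\supseteq\C$.

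The first and main step is to show that no irreducible component $C'$ of $C$ has at most two special points. If it did, then by \cref{lem:components_of_the_domain_curve}(\ref{item:components:unstable}) the image $f(C')$ would miss the boundary $\partial Y'$, hence lie in $X_K\coloneqq(Y\setminus D)\otimes_\C K\simeq(Y'\setminus\partial Y')\otimes_k K$. Were $f|_{C'}$ non-constant, its image would be a proper curve in $Y_K$ disjoint from $D_K$; descending along $K\supseteq\C$ by a standard Hilbert-scheme argument (a $K$-point of the finite-type locus of bounded-degree subschemes of $Y$ avoiding $D$ forces a $\C$-point, since $\C$ is algebraically closed) would then yield a proper curve in $Y$ disjoint from $D$, contradicting the standing assumption. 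Hence $f|_{C'}$ is constant, contradicting the stability of the stable map; so no such $C'$ exists. Consequently $\big(C,(s_i)\big)$ is a stable pointed rational curve: in the three-pointed case this forces $C\cong\bbP^1_K$, which is Condition (1) of \cref{prop:smoothness}, and in the five-pointed case $\big(C,(s_1,\dots,s_5)\big)$ is a stable $5$-pointed curve, which is Condition (1) of \cref{prop:smoothness_d}.

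Condition (2) in both cases should then follow formally: by \cref{lem:components_of_the_domain_curve}(\ref{item:components:boundary}) no component of $C$ maps into the Cartier divisor $\partial Y'$, so $f^*(\partial Y')$ is a well-defined effective Cartier divisor on $C$, and by \cref{lem:components_of_the_domain_curve}(\ref{item:components:marked_points}) its support is exactly the first two (resp.\ first four) marked points, which are smooth points of $C$. Thus $f\inv(\partial Y')$ is a Cartier divisor supported at those marked points, so the stable map lies in $\cM(L^0,\beta)\an_0$ (resp.\ $\cM^d(\beta)\an_0$), establishing the claimed inclusions.

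The hard part is the contraction assertion in the second paragraph — this is the one place where the deformation hypothesis ``$Y$ has no proper curve disjoint from $D$'' is genuinely used — together with the descent of proper curves from the extension $K$ down to $\C$; everything else is bookkeeping with \cref{lem:components_of_the_domain_curve}, plus the minor verification that analytification commutes with passing to the substacks $\cM(L^0,\beta)_0$ and $\cM^d(\beta)_0$.
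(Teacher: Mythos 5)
Your argument is correct and is exactly the (omitted) reasoning the paper intends: the lemma is stated as an immediate consequence of \cref{lem:components_of_the_domain_curve} together with the standing assumption that $Y$ contains no proper curve disjoint from $D$, and your proof simply fills in the same steps — ruling out components with at most two special points via \cref{lem:components_of_the_domain_curve}(\ref{item:components:unstable}), stability, and descent of a hypothetical curve disjoint from $D$ to $\C$, then deducing Conditions (1) and (2) from parts (\ref{item:components:boundary}) and (\ref{item:components:marked_points}). No discrepancy with the paper's approach.
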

\begin{proof}
	Under \cref{assum:deformation}, \cref{lem:components_of_the_domain_curve}(\ref{item:components:unstable}) implies that the pointed domain curve of any analytic stable map in $\cM(\hL^0,\beta,T)$ or $\cM^\rd(\beta,T)$ is stable.
	Therefore, combining with \cref{lem:components_of_the_domain_curve}(\ref{item:components:marked_points}), we obtain the two inclusions in the statement of the lemma.
\end{proof}

Similarly, we have the following:

\begin{lem}\label{lem:stable_domain_p}
	Let $p_{Y,1}\colon\tY\to\bbP^1_k$ denote the projection map induced by the map of simplicial cone complexes $\tB'\to\R=\R_{<0}\cup\{0\}\cup\R_{>0}$.
	Consider the composite map
	\[(p\an_{Y,1}\circ\ev_3)\colon\cM^\rp(\hL^0,\beta,T)\to(\bbP^1_k)\an.\]
	Let $\cM^\rp(\hL^0,\beta,T)_1\coloneqq(p\an_{Y,1}\circ\ev_3)\inv(\Gmk\an)$.
	Let $\cM^\rp(\hL^0,\beta)_0$ be as in \cref{prop:smoothness}.
	Then we have $\cM^\rp(\hL^0,\beta,T)_1\subset\cM^\rp(\hL^0,\beta)\an_0$.
\end{lem}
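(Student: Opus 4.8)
The plan is to follow, essentially verbatim, the way \cref{lem:stable_domain} is deduced from \cref{lem:components_of_the_domain_curve}, once we have at our disposal the analogue of \cref{lem:components_of_the_domain_curve} for the stable maps parametrized by $\cM^p(L^0,\beta,T)_1$. So fix such a stable map $x=\big(C,(s_1,s_2,s_3),f\colon C\to\tY\an\big)$ and set $g\coloneqq p\an_{Y,2}\circ f\colon C\to(Y')\an$. Three facts are available at the outset: (i) by the definition of $\cM^p(L^0,\beta,T)$ the tropicalization of $g$ is a pointed rational tropical curve in $B_f$ with two boundary points lying in $T^0_f$; (ii) by the curve class condition of \cref{const:spbeta} the pushforward of $[C]$ to $\NE(\bbP^1_k)$ is the fundamental class, so there is a unique irreducible component $C_0$ of $C$ on which $p\an_{Y,1}\circ f$ has degree one and $p\an_{Y,1}\circ f$ is constant on all other components; (iii) by hypothesis $f(s_3)$ lies over the torus $\Gmk\subset\bbP^1_k$.

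First I would re-run the four steps of the proof of \cref{lem:components_of_the_domain_curve} with $g$ in place of $f$. Those steps use only the balancing and intersection conditions (\cref{def:rational_tropical_curve_in_Bf}, \cref{const:sbeta}), the tropical rigidity of \cref{prop:tropical_rigidity}, and the stability of the \emph{stable} map $f$ into $\tY\an$ — which is unaffected by postcomposition with $p\an_{Y,2}$ — so they carry over, provided one keeps track of the extra $\bbP^1_k$-direction. The component $C_0$ of (ii) has to be treated separately; and for a component $C'$ which $g$ contracts to a point, either $f(C')$ is a fibre of $p_{Y,2}$, which has $p\an_{Y,1}$-degree one and hence forces $C'=C_0$, or $C'$ is contracted by $p\an_{Y,1}\circ f$ onto a point $t\in\bbP^1_k$, in which case (iii) is used to exclude $t\in\{0,\infty\}$, so that $(p\an_{Y,1})\inv(t)\cong(Y')\an$ and the original argument applies to $g|_{C'}=f|_{C'}$: if $C'$ has at most two special points it is non-constant by stability, its $\tau_Y$-image is $\{O\}$ by the balancing argument of \cref{lem:components_of_the_domain_curve}, and then $g(C')$ is a proper rational curve in $X\an_k$ disjoint from $D$, which is excluded by the standing Assumption. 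Carrying this through yields the analogue of \cref{lem:components_of_the_domain_curve} for $x$: no component of $C$ is carried by $g$ into $(\partial Y')\an$; $f\inv(\partial\tY)$ is supported at $s_1$ and $s_2$; every component with at most two special points is contracted by $g$ into $\tau_Y\inv(O)$; and the union $C_m$ of components carrying the marked points equals $\st(C)$.

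Finally, the deduction of \cref{lem:stable_domain} applies unchanged: under the Assumption there are no components contracted to $O$, so $C$ has no components beyond $C_m=\st(C)$, which for a three-pointed genus-zero stable curve is a single projective line; hence $C$ is irreducible, and together with the transversality/multiplicity conditions of \cref{const:spbeta} the fact that $f\inv(\partial\tY)$ is supported at $s_1,s_2$ shows it is a Cartier divisor there, so $x\in\cM^p(L^0,\beta)\an_0$. The main obstacle is the second paragraph: the tropicalization in $B_f$ is blind to components that $p_{Y,2}$ contracts, i.e.\ to components living in the $\bbP^1_k$-direction, so these must be handled separately using the degree-one condition over $\bbP^1_k$ together with the hypothesis that the third marked point maps into $\Gmk\an$; the genuinely delicate check is that $f\inv(\partial\tY)$ picks up no additional support from the fibres of $p_{Y,1}$ over $0$ and $\infty$.
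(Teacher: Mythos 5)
Your overall route is the one the paper intends: the paper gives no separate argument for this lemma beyond ``similarly'', i.e.\ one is supposed to adapt \cref{lem:components_of_the_domain_curve} to the composed map $g=p\an_{Y,2}\circ f$ and then conclude exactly as \cref{lem:stable_domain} is deduced from it via the standing Assumption. Your last paragraph carries out that final deduction correctly, and you have correctly isolated where the adaptation is not automatic, namely the components of $C$ living in the $\bbP^1_k$-direction.

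However, at that very point the proposal has a genuine gap rather than a proof. First, the blanket claim that the four steps of \cref{lem:components_of_the_domain_curve} ``carry over'' because stability of $f$ is unaffected by postcomposition misidentifies which stability is used: Steps (1) and (3) of that proof invoke stability of the map into the tropicalized target, i.e.\ what would be stability of $g$, and a component contracted by $g$ but not by $f$ is precisely what escapes this. Second, your dichotomy does not dispose of such components. Condition (iii) constrains only the component carrying $s_3$, so it does not exclude other components from being contracted by $p\an_{Y,1}\circ f$ onto $0$ or $\infty$; such components map into $\partial\tY$ and are controlled neither by stability of $f$ nor by the tropicalization alone. Moreover, for $t\in\Gmk$ the fiber $(p\an_{Y,1})\inv(t)$ is the toroidal modification of $Y_k$ determined by the slice of $\tB'$ along $\{0\}\times B$, which is only required (by \cref{const:CB}) to refine $B'_\infty$, so it dominates but need not equal $(Y')\an$; likewise the fibers of $p\an_{Y,2}$ over points of $(\partial Y')\an$ can be reducible. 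Hence a component mapped by $f$ onto such an exceptional or vertical boundary curve is contracted by $g$ and by $p\an_{Y,1}\circ f$ but not by $f$, so neither ``$g|_{C'}=f|_{C'}$'' nor the stability argument applies to it, and it does not force $C'=C_0$ either. Excluding these configurations is the real content of the lemma (it is why the restriction to $\cM^p(L^0,\beta,T)_1$ is needed), and it requires inputs you never use, e.g.\ the intersection numbers with \emph{all} boundary divisors of $\tY$ imposed by \cref{const:spbeta}, the connectedness of $(p\an_{Y,1}\circ f)\inv(0)$ and $(p\an_{Y,1}\circ f)\inv(\infty)$ for a class of $\bbP^1_k$-degree one, and the constraint that the tropicalization lies in $T^0_f$. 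Your closing sentence names this as ``the genuinely delicate check'' but does not perform it, so the argument is incomplete exactly where the lemma is nontrivial.
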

\begin{proof}
	Under \cref{assum:deformation}, \cite[Lemma 5.9(1.2)]{Gross_Mirror_Log_2011} implies that there exists an ample effective divisor $A$ on $Y'$ with support equal to $\partial Y'$.
	As $\tY$ is a toric blowup of $\bbP_k^1\times Y'$, we deduce that there also exists an ample effective divisor $\tA$ on $\tY$ with support equal to $\partial\tY$.
	
	Let $\ttau\colon (\tY\setminus\partial\tY)\an\to\tB$ denote the retraction map.
	Let $[C,(s_1,s_2,s_3),f\colon C\to\tY\an]$ be a stable map in $\cM^\rp(\hL^0,\beta,T)_1$.
	By definition, there is an irreducible component $C_0$ of $C$ with three marked points $\{\widebar s_1,\widebar s_2,\widebar s_3\}$ such that the restriction of $(\ttau\circ f)$ to the path $(\widebar s_1,\widebar s_2)\subset C_0$ (excluding the endpoints) is equal to
	\[\widetilde h^0\coloneqq(\iota,\hh^0)\colon (\hGamma^0)^\circ\to\R\times B=\tB,\]
	where $\iota\colon(\hGamma^0)^\circ\to\R$ is the homeomorphism preserving the \Zaffine structures.
		Therefore, by \cref{const:spbeta}(\ref{const:spbeta:intersect}), for any $\gamma\in\NE(\tY)$, we have
	\[\gamma\cdot\tA\le[f(C_0)]\cdot\tA.\]
	Since $[f(C)]\cdot\tA=\gamma\cdot\tA$ for some $\gamma\in\NE(\tY)$ and $\tA$ is ample, for any irreducible component $E$ of $C$ other than $C_0$, $f|_E$ is necessarily constant.
	This implies that the pointed domain curve $[C,(s_1,s_2,s_3)]$ is stable.
	Thus composing $f$ with the projection $p_{Y,2}\colon\tY\to Y'$, we obtain a stable map in $\cM(\hL^0,\beta,T)$.
	Now we conclude from \cref{lem:stable_domain}.
\end{proof}

\begin{lem} \label{lem:non-stacky}
	The \kanal stacks $\cM(\hL^0,\beta,T)$, $\cM^\rp(\hL^0,\beta,T)_1$ and $\cM^\rd(\beta,T)$ are \kanal spaces.
\end{lem}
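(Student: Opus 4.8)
The plan is to reduce the statement to the triviality of the automorphism groups of the stable maps involved. Indeed, a geometric $k$-analytic stack (in the sense of \cite{Porta_Yu_Higher_analytic_stacks_2014}) all of whose objects have trivial automorphism groups is equivalent to a sheaf of sets, hence is represented by a $k$-analytic space. Recall also that an automorphism of an analytic stable map $\bigl(C,(s_i),f\bigr)$ is an automorphism of the pointed curve $\bigl(C,(s_i)\bigr)$ commuting with $f$; so it suffices to show that in each of the three cases the pointed domain curve carries no nontrivial automorphism.

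For the stack $\cM(L^0,\beta,T)$, \cref{lem:stable_domain} yields the inclusion $\cM(L^0,\beta,T)\subset\cM(L^0,\beta)\an_0$, and by the defining conditions of $\cM(L^0,\beta)_0$ in \cref{prop:smoothness} the domain curve of such a stable map is irreducible of arithmetic genus zero and carries the three distinct marked points $s_1,s_2,s_3$. Since a genus-zero curve has no nontrivial automorphism fixing three distinct points, these stable maps are automorphism-free. The same argument, now using \cref{lem:stable_domain_p} together with the analogous description of $\cM^p(L^0,\beta)_0$, applies to $\cM^p(L^0,\beta,T)_1$.

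For the stack $\cM^d(\beta,T)$, \cref{lem:stable_domain} gives $\cM^d(\beta,T)\subset\cM^d(\beta)\an_0$, and by \cref{prop:smoothness_d} the pointed domain $\bigl(C,(s_1,\dots,s_5)\bigr)$ of such a stable map is a stable $5$-pointed rational curve, which has trivial automorphism group as a pointed curve (equivalently, $\bcM_{0,5}$ is a fine moduli space). Hence again there are no nontrivial automorphisms, and combining the three cases with the general principle recalled above finishes the argument. There is no real obstacle here: the content lies entirely in the reductions already performed in \cref{sec:smoothness} --- once \cref{lem:components_of_the_domain_curve}, together with the assumption that $Y$ contains no proper curve disjoint from $D$, has forced the domain to be either a three-pointed projective line or a stable five-pointed rational curve, the rigidity is immediate. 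The only point requiring a little care is to check that these reductions genuinely cover every point of the three stacks, so that no stacky stratum is left over.
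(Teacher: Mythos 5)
Your identification of the source of rigidity is exactly the paper's: by \cref{lem:stable_domain,lem:stable_domain_p} the relevant points lie in $\cM(L^0,\beta)\an_0$, $\cM^p(L^0,\beta)\an_0$ and $\cM^d(\beta)\an_0$, where the domain is either an irreducible $3$-pointed rational curve or a stable $5$-pointed rational curve, so the stable maps have no nontrivial automorphisms. That part is fine.

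The gap is in the step you treat as a general principle: ``a geometric \kanal stack all of whose objects have trivial automorphism groups is equivalent to a sheaf of sets, \emph{hence} is represented by a \kanal space.'' The first half is true, but the second half is precisely the nontrivial point in the non-archimedean setting, and it is false without further hypotheses: a quotient sheaf of an \'etale equivalence relation in \kanal spaces need not be representable by a \kanal space (this is the content of Conrad--Temkin's work on analytification of algebraic spaces, where non-separated algebraic spaces furnish counterexamples). So triviality of automorphisms alone does not finish the argument, and the ``little care'' you flag (covering all points) is not the real issue. The paper closes this gap as follows: $\cM(L^0,\beta)$ is an algebraic \DM stack, hence a quotient of an \'etale groupoid in $k$-schemes; its analytification, and therefore the analytic domain $\cM(L^0,\beta,T)$ inside it, is a quotient of an \'etale groupoid in \kanal spaces; triviality of automorphisms upgrades this to an \'etale \emph{equivalence relation}; and then, because $\cM(L^0,\beta,T)$ is \emph{separated}, the quotient is a \kanal space by \cite[Theorem 1.2.2]{Conrad_Non-archimedean_analytification_2009}. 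Your proof needs both the separatedness hypothesis and an appeal to such a representability theorem (or an argument within the framework of \cite{Porta_Yu_Higher_analytic_stacks_2014} playing the same role); as written, the representability assertion is unjustified.
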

\begin{proof}
	The stack $\cM(\hL^0,\beta)$ is an algebraic \DM stack over $k$, so it is a quotient of an étale groupoid in $k$-schemes of finite type.
	Then its analytification $\cM(\hL^0,\beta)\an$ is a quotient of an étale groupoid in \kanal spaces, so is the analytic domain $\cM(\hL^0,\beta,T)$ in $\cM(\hL^0,\beta)\an$.
	Note that a stable pointed rational curve does not have non-trivial automorphisms.
	So by \cref{lem:stable_domain}, the points in $\cM(\hL^0,\beta,T)$ do not have non-trivial automorphisms.
	Then we can write $\cM(\hL^0,\beta,T)$ as a quotient of an étale equivalence relation in \kanal spaces.
	Since the \kanal stack $\cM(\hL^0,\beta,T)$ is separated, it follows from \cite[Theorem 1.2.2]{Conrad_Non-archimedean_analytification_2009} that it is isomorphic to a \kanal space.
	Using \cref{lem:stable_domain_p} and \cref{lem:stable_domain}, the same argument shows that the \kanal stacks $\cM^\rp(\hL^0,\beta,T)_1$ and $\cM^\rd(\beta,T)$ are also \kanal spaces.
\end{proof}

We recall that a map of \kanal spaces $\phi\colon X'\to X$ is said to be \emph{finite} if there is an affinoid G-covering $\{V_i\}$ of $X$ such that every $\phi\inv(V_i)\to V_i$ is a finite map of $k$-affinoid spaces, i.e.\ the corresponding homomorphism of $k$-affinoid algebras is finite (see \cite[\S 1.3]{Berkovich_Etale_1993} for the G-topology).

We refer to \cite{Berkovich_Etale_1993,Ducros_Families_2011} for the notion of étale map of \kanal spaces.
For a finite étale map of \kanal spaces $\phi\colon X'\to X$, we have an affinoid G-covering $\{V_i\}$ of $X$ such that every $\phi\inv(V_i)\to V_i$ is a finite étale map of $k$-affinoid spaces, i.e.\ the corresponding homomorphism of $k$-affinoid algebras is finite étale.
Therefore, $\phi_*\cO_{X'}$ is a finite G-locally free $\cO_X$-module.
We define the \emph{degree} of the finite étale map $\phi$ to be the rank of $\phi_*\cO_{X'}$ viewed as a finite G-locally free $\cO_X$-module.

By \cref{lem:non-stacky}, all the moduli stacks involved in the following proposition and the following theorem are \kanal spaces.

\begin{prop}\label{prop:finite_etale}
	There exists three Zariski dense open subsets
	$V^0\subset W^0$, $V^\rp\subset W^\rp$ and $V^\rd\subset\tDelta\times W^\rd$, such that the three maps
	\begin{align*}
	&\ev_3\colon\cM(\hL^0,\beta,T)_W\to W^0\\
	&\ev_3\colon\cM^\rp(\hL^0,\beta,T)_W\to W^\rp\\
	&\Phi=(\mathrm{st},\ev_5)\colon\cM^\rd(\beta,T)_W\to\tDelta\times W^\rd
	\end{align*}
	considered in \cref{sec:analytic_constructions} are finite étale over $V^0$, $V^\rp$ and $V^\rd$ respectively.
	Moreover, we can require that the projection $V^\rd\to\tDelta$ is surjective.
	We will denote respectively by $\cM(\hL^0,\beta,T)_V$, $\cM^\rp(\hL^0,\beta,T)_V$ and $\cM^\rd(\beta,T)_V$ the preimages of $V^0, V^\rp$ and $V^\rd$ under the three maps considered above.
\end{prop}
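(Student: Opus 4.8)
The plan is to deduce the proposition from the properness statements of \cref{prop:properness,prop:properness_d} together with the smoothness statements of \cref{prop:smoothness,prop:smoothness_d}, via the principle that a proper étale morphism of \kanal spaces is finite étale (being proper with finite fibres). Concretely I would proceed in three steps: first upgrade ``smooth'' to ``étale'' over the Zariski-opens already constructed; then cut down to the relevant analytic subdomains and invoke properness to get finiteness; and finally verify that the resulting Zariski-open loci are non-empty and that $V^d\to\tDelta$ is surjective.

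\emph{Step 1.} In the proofs of \cref{prop:smoothness,prop:smoothness_d}, at any point of $\cM(L^0,\beta)_0$, $\cM^p(L^0,\beta)_0$ or $\cM^d(\beta)_0$ lying over the Zariski-open subset $V$ constructed there, the derivative of $\ev_3$ (resp.\ of $\Phi$) is surjective; hence the pulled-back logarithmic tangent bundle $E$ is globally generated, and, since $\partial Y'$ (resp.\ $\partial\tY$) is anticanonical, $E$ has degree zero on every component of the genus-zero domain, so $E$ is trivial and $H^1(C,E)=0$. The dimension estimates used in those proofs moreover show that the moduli space is smooth at the point in question, of dimension equal to that of the target ($\dim Y'$, resp.\ $\dim\tY$, resp.\ $\dim(\bcM_{0,5}\times Y')$). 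Thus the three algebraic maps $\ev_3\colon\cM(L^0,\beta)_0\to Y'$, $\ev_3\colon\cM^p(L^0,\beta)_0\to\tY$ and $\Phi\colon\cM^d(\beta)_0\to\bcM_{0,5}\times Y'$ have relative dimension zero over $V$, i.e.\ are étale there, and so are their analytifications over $V\an$.

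\emph{Step 2.} By \cref{lem:stable_domain} --- together with \cref{lem:stable_domain_p} and the remark that $\cM^p(L^0,\beta,T)_W\subset\cM^p(L^0,\beta,T)_1$ because $W^p$ lies over $\Gmk\an$ --- the spaces $\cM(L^0,\beta,T)_W$, $\cM^p(L^0,\beta,T)_W$ and $\cM^d(\beta,T)_W$ sit inside $\cM(L^0,\beta)\an_0$, $\cM^p(L^0,\beta)\an_0$ and $\cM^d(\beta)\an_0$ respectively. Setting
\[V^0\coloneqq W^0\cap V\an,\qquad V^p\coloneqq W^p\cap V\an,\qquad V^d\coloneqq(\tDelta\times W^d)\cap V\an,\]
the three maps of the proposition are étale over $V^0$, $V^p$, $V^d$, being restrictions of the étale maps of Step 1; by \cref{prop:properness,prop:properness_d} they are proper over $W^0$, $W^p$ and $\tDelta\times W^d$, hence proper over $V^0$, $V^p$, $V^d$. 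Since every space involved is a \kanal space by \cref{lem:non-stacky} and a proper étale morphism is finite étale, this establishes the three finite-étale assertions.

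\emph{Step 3, and the main difficulty.} It remains to see that $V^0$, $V^p$, $V^d$ are \emph{non-empty} --- they are visibly Zariski-open, being traces of $V\an$ --- and that $V^d\to\tDelta$ is surjective, and this is where I expect the real care to be needed. The argument is a dimension comparison. The points $h^0(v^0_1)$ and $h^d(v^d)$ lie in $B^\circ_f\setminus O$, where $\tau_Y$ is an affinoid torus fibration, so $W^0$ and $W^d$ are irreducible affinoid tori of dimension $2$ and $W^p=\Gmk\an\times W^0$ has dimension $3$; meanwhile $Y'\setminus V$, $\tY\setminus V$ and $(\bcM_{0,5}\times Y')\setminus V$ have dimension at most $1$, $2$ and $3$ respectively, by the dimension bounds in the proofs of \cref{prop:smoothness,prop:smoothness_d}. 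Hence $(Y'\setminus V)\an$ cannot contain $W^0$ and $(\tY\setminus V)\an$ cannot contain $W^p$, so $V^0$ and $V^p$ are non-empty. For $V^d$ I would use the extra clause of \cref{prop:smoothness_d} that $V\to\bcM_{0,5}$ is surjective: for every $m\in\tDelta$ the fibre of $V$ over the image of $m$ in $\bcM_{0,5}$ is a non-empty Zariski-open of the corresponding (base-changed) copy of $Y'$, whose complement has dimension $\le 1$ and therefore does not contain the $2$-dimensional fibre of $\tDelta\times W^d\to\tDelta$ over $m$; thus $V^d$ has a point over $m$. This gives surjectivity of $V^d\to\tDelta$, in particular $V^d\neq\emptyset$. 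The main obstacle is precisely this bookkeeping of dimension inequalities --- and, in the $\cM^d$ case, the harmless base change to the completed residue field of $m$ --- so as to guarantee that the intersections with the analytic fibres $W^0$, $W^p$, $W^d$ stay non-empty; once that is in place, the proposition follows formally from the results already established.
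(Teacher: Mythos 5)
Your argument is correct and follows essentially the same route as the paper: use \cref{prop:smoothness,prop:smoothness_d} with \cref{lem:stable_domain,lem:stable_domain_p} to get smoothness over a Zariski open locus, upgrade to étale by the dimension count ($E$ trivial, $h^0(C,E)=\chi(C,E)=\operatorname{rank}E$), and combine with the properness of \cref{prop:properness,prop:properness_d} to conclude finite étale. Your Step 3 merely spells out the nonemptiness of $V^0,V^p,V^d$ and the surjectivity of $V^d\to\tDelta$, which the paper leaves implicit, so there is nothing substantively different to flag.
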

\begin{proof}
	Let us prove for the first map.
	\cref{prop:smoothness} and \cref{lem:stable_domain} imply that there exists a Zariski dense open subset $V^0\subset W^0$ such that the first map is smooth over $V^0$.
	Using the notations in the proof of \cref{prop:smoothness}, the dimension of the space $\cM(\hL^0,\beta,T)_V$ is equal to the dimension of $H^0(C,E)$.
	Using the Riemann-Roch formula, we compute that
	\[\dim(H^0(C,E))=\chi(C,E)=\rank E + \deg E = 2+0 =2.\]
	So the dimension of $\cM(\hL^0,\beta,T)_V$ equals the dimension of $W^0$.
	Thus the map $\ev_3\colon\cM(\hL^0,\beta,T)_W\to W^0$ is étale over $V^0$.
	Moreover, the map $\ev_3\colon \cM(\hL^0,\beta,T)_W \to W^0$ is proper by \cref{prop:properness}.
	So it is finite over $V^0$ by \cite[Corollary 3.3.8]{Berkovich_Spectral_1990}.
	To conclude, the map $\ev_3\colon\cM(\hL^0,\beta,T)_W\to W^0$ is finite étale over $V^0$.
	
	For the second map, the same reasoning applies, using \cref{prop:smoothness}, \cref{lem:stable_domain_p} and \cref{prop:properness}.
	
	Now we consider the third map.
	By \cref{prop:properness_d}, the map
	\[\Phi\colon\cM^\rd(\beta,T)_W\to\tDelta\times W^\rd\]
	is proper.
	Let $\cM^\rd_\sm(\beta)_0\subset\cM^\rd(\beta)_0\subset\cM^\rd(\beta)$ be as in \cref{prop:smoothness_d}.
	By \cref{lem:stable_domain}, we have $\cM^\rd(\beta,T)\subset\cM^\rd(\beta)_0\an$.
	Let
	\[V^\rd\coloneqq\tDelta\times W^\rd\setminus\Phi\big(\cM^\rd(\beta,T)_W\setminus\cM^\rd_\sm(\beta)_0\an\big).\]
	The properness of $\Phi$ implies that $V^\rd\subset\tDelta\times W^\rd$ is Zariski open.
	\cref{prop:smoothness_d} shows that $\Phi$ is smooth over $V^\rd$ and that the projection $V^\rd\to\tDelta$ is surjective.
	Using the notations in the proof of \cref{prop:smoothness_d}, the dimension of $\cM^\rd(\beta,T)_V$ is equal to $h^0(C,E)+\dim\bcM_{0,5}$.
	Using the Riemann-Roch formula again, we see that the dimension of $\cM^\rd(\beta,T)_V$ is equal to the dimension of $\tDelta\times W^\rd$.
	Combining with smoothness, we conclude that $\Phi$ is étale over $V^\rd$.
	Finally, combining with properness, we conclude that $\Phi$ is finite étale over $V^\rd$.
\end{proof}

\begin{thm}\label{thm:degree}
	Let $\beta'$ denote the curve class associated to the extension from $L^0$ to $\hL^0$ (see \cref{const:extension}).
	Let $\beta^0\coloneqq\beta-\beta'$.
	The finite étale maps
	\begin{align*}
	&\ev_3\colon\cM(\hL^0,\beta,T)_V\to V^0, \text{ and}\\
	&\ev_3\colon\cM^\rp(\hL^0,\beta,T)_V\to V^\rp
	\end{align*}
	have the same degree.
	The degree is equal to the number $N(L^0,\beta^0)$ of holomorphic cylinders associated to the spine $L^0$ and the curve class $\beta^0$.
	As a consequence, the number $N(L^0,\beta^0)$ is a nonnegative integer.
\end{thm}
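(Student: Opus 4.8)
The plan is to prove the two equalities of degrees, after which the nonnegative-integrality drops out for free: a finite étale map $\phi$ of \kanal spaces has $\phi_*\cO$ finite locally free, and its degree is by definition the rank of this sheaf, a nonnegative integer. So all the content is in the claims (i) $\deg\!\big(\ev_3\colon\cM^p(L^0,\beta,T)_V\to V^p\big)=N(L^0,\beta^0)$ and (ii) the two displayed degrees coincide.

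For (i) I would unwind the definition of $N(L^0,\beta^0)$ recalled in \cref{sec:review_of_cylinder_counts}: it is the degree of the virtual fundamental class of the proper moduli space $\cM(\tL^0)$ of analytic stable maps to $\tY\an$ with three marked points on $\tD_1\an$, $\tD_2\an$ and a chosen point $p$, whose associated extended spine equals the straightening $\tL^0$ of $L^0$; the relevant curve class there is exactly the unique element of $s^p(L^0,\beta)$, the relation $\beta=\beta^0+\beta'$ accounting precisely for the class added in passing from $L^0$ to $\hL^0$. Comparing this with \cref{const:Mpbeta} and with Constructions \ref{const:subset_Ti}, \ref{const:TiTd} --- which recast the condition that the $B_f$-tropicalization lie in $T^0_f$ in terms of the straightened spine --- one identifies $\cM(\tL^0)$ canonically with the fibre of $\ev_3\colon\cM^p(L^0,\beta,T)_W\to W^p$ over $p$. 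Since $N(L^0,\beta^0)$ does not depend on the choice of $p$ (this is proved in \cite{Yu_Enumeration_cylinders_2015}), I may take $p\in V^p$; then, by \cref{prop:finite_etale}, the fibre of the finite étale map $\ev_3\colon\cM^p(L^0,\beta,T)_V\to V^p$ over $p$ is a disjoint union of reduced points. Hence $\cM(\tL^0)$ is smooth of the expected dimension $0$, its virtual class equals its ordinary fundamental $0$-cycle (as in the smoothness arguments of \cref{sec:smoothness}), and its degree is the number of its points, namely $\deg\!\big(\ev_3\colon\cM^p(L^0,\beta,T)_V\to V^p\big)$.

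For (ii) I would use the morphism $p_{Y,2}\colon\tY\to Y'$. Post-composition with $p\an_{Y,2}$ defines $\Pi\colon\cM^p(L^0,\beta,T)_W\to\cM(L^0,\beta,T)_W$: the domain of every stable map involved is already stable (\cref{lem:stable_domain_p}, \cref{lem:stable_domain}), hence irreducible and isomorphic to $\bbP^1_k$; the ray maps $\widetilde r^0_j\mapsto r^0_j$ under $\tB'\to B'_\infty$ show $p_{Y,2}$ carries $\tD^0_j$ onto $D^0_j$ with ramification index $m^0_j$, so a transversal intersection with $\tD^0_j$ becomes a multiplicity-$m^0_j$ intersection with $D^0_j$; the pushed-forward curve class lies in $s(L^0,\beta)$; and the two tropicalization maps agree after $p_{Y,2}$, so the $T^0_f$-condition is preserved. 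As $p_{Y,2}$ restricts on $W^p=\Gmk\an\times W^0$ to the projection onto $W^0$, the map $\Pi$ is compatible with the two evaluation maps, and recording in addition the $\bbP^1_k$-coordinate $z\coloneqq p\an_{Y,1}\circ\ev_3$ of the third marked point yields a morphism $\cM^p(L^0,\beta,T)_W\to\cM(L^0,\beta,T)_W\times_{W^0}W^p$ which I claim is an isomorphism over the loci where the maps are finite étale. The content of this claim is a unique-lifting statement: given $\big(C,(s_1,s_2,s_3),g\colon C\to Y'\big)\in\cM(L^0,\beta,T)_W$ and a value $z_0\in\Gmk\an$, a lift to $\cM^p(L^0,\beta,T)_W$ with $p_{Y,1}(f(s_3))=z_0$ amounts to an isomorphism $\phi=p_{Y,1}\circ f\colon C\xrightarrow{\sim}\bbP^1_k$ carrying $s_1,s_2$ to the two torus-fixed points of $\bbP^1_k$ and $s_3$ to $z_0$, and three points determine $\phi$ uniquely, while --- this is the delicate point --- for generic $g$ the datum $(g,\phi)$ extends to an honest stable map into $\tY$ meeting $\tD^0_1,\tD^0_2$ transversally and avoiding the rest of $\partial\tY$. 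Granting the claim, $\ev_3\colon\cM^p(L^0,\beta,T)_V\to V^p$ is obtained from $\ev_3\colon\cM(L^0,\beta,T)_V\to V^0$ by a flat base change and the two therefore have the same degree; together with (i) this proves the theorem.

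I expect the main obstacle to be the delicate point of step (ii): producing, for a generic $g$, an actual morphism $f\colon C\to\tY$ --- not merely a rational map --- meeting $\tD^0_1$ and $\tD^0_2$ transversally and contained in $(\tY\setminus\partial\tY)\cup\tD^0_1\cup\tD^0_2$. This is a local toric computation at the two points of $C$ over which $\phi$ takes the torus-fixed values: there, the multiplicity-$m^0_j$ contact of $g$ with $D^0_j$ together with the simple zero of $\phi$ pins the tangent direction of $(g,\phi)$ to that of the ray $\widetilde r^0_j$ prescribed in \cref{const:tB}(\ref{item:tB:ray}), so that the toric blowup defining $\tY$ separates it into a transversal contact with $\tD^0_j$ --- which is exactly what that choice of subdivision was arranged to achieve. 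A more clerical secondary point is matching \cite{Yu_Enumeration_cylinders_2015}'s curve class $\tbeta$ and extended-spine condition with the present $s^p(L^0,\beta)$ and $T^0_f$, and the compatibility of the virtual class of $\cM(\tL^0)$ with its ordinary fundamental class once smoothness of the expected dimension is known.
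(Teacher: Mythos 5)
Your proposal is correct and follows essentially the same route as the paper: the paper also compares the two degrees via the morphism $\Psi=(p_{Y,1}\circ\ev_3,\pi_2)\colon\cM^p(L^0,\beta)_0\to\bbP^1_k\times\cM(L^0,\beta)_0$ (your base-change map, asserted there to be an isomorphism over $\Gmk\times\cM(L^0,\beta)_0$ ``by construction''), and then identifies the common degree with $N(L^0,\beta^0)$ by observing that étaleness forces the virtual fundamental class of the moduli space defining $N(L^0,\beta^0)$ to coincide with the ordinary one. The ``delicate point'' you isolate (lifting $(g,\phi)$ to a genuine map to $\tY$ with transversal contact along $\tD^0_1,\tD^0_2$, using the rays of \cref{const:tB}) is exactly what the paper's ``by construction'' remark silently invokes, so your extra care is consistent with, not divergent from, the paper's argument.
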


\begin{rem}
	Here $\beta$ is the curve class of the rational stable maps, while $\beta^0\coloneqq\beta-\beta'$ is the curve class of the holomorphic cylinders, because $\beta'$ represents the sum of curve classes of the two auxiliary disks glued to our cylinders in order to make them into rational curves.
	The notion of curve class of a curve with boundary is defined via specialization as in \cite[Definition 5.10]{Yu_Enumeration_cylinders_2015}.
\end{rem}

\begin{proof}[Proof of \cref{thm:degree}]
	Let $\cM(\hL^0,\beta)_0$ and $\cM^\rp(\hL^0,\beta)_0$ be as in \cref{prop:smoothness}.
	Recall that we have two projections $p_{Y,1}\colon\tY\to\bbP^1_k$ and $p_{Y,2}\colon\tY\to Y'$.
	Composing the evaluation map of the third marked point with the first projection, we get a natural map
	\[p_{Y,1}\circ\ev_3\colon \cM^\rp(\hL^0,\beta)_0\to\bbP^1_k.\]
	Composing the stable maps with the second projection, we get a natural map
	\[\pi_2\colon \cM^\rp(\hL^0,\beta)_0\to\cM(\hL^0,\beta)_0.\]
	Combining the two, we obtain a natural map
	\[\Psi\coloneqq(p_{Y,1}\circ\ev_3,\pi_2)\colon\cM^\rp(\hL^0,\beta)_0\to\bbP^1_k\times\cM(\hL^0,\beta)_0.\]
	By construction, we see that $\Psi$ is an isomorphism over $\Gmk\times\cM(\hL^0,\beta)_0$.
	Combining with Lemmas \ref{lem:stable_domain} and \ref{lem:stable_domain_p}, we deduce that the finite étale maps
	\begin{align*}
	&\ev_3\colon\cM(\hL^0,\beta,T)_V\to V^0, \text{ and}\\
	&\ev_3\colon\cM^\rp(\hL^0,\beta,T)_V\to V^\rp
	\end{align*}
	have the same degree.
	The construction of the number $N(L^0,\beta^0)$ in \cite{Yu_Enumeration_cylinders_2015}, as reviewed in \cref{sec:review_of_cylinder_counts}, involves a virtual fundamental class.
	Here in our situation, it follows from étaleness that the virtual fundamental class is equal to the fundamental class (see \cite[Example after Proposition 5.3]{Behrend_Intrinsic_normal_cone_1997}).
	Therefore, the number $N(L^0,\beta^0)$ is equal to the degree of the finite étale maps above.
	As a consequence, the number $N(L^0,\beta^0)$ is a nonnegative integer.
\end{proof}

\begin{rem} \label{rem:symmetry}
	Recall that we have the extended spine \[\hL^0=(\hGamma^0,(u^0_1,u^0_2),\hh^0\colon(\hGamma^0)^\circ\to B)\]
	associated to the spine $L^0=(\Gamma^0,(v^0_1,v^0_2),h^0\colon\Gamma^0\to B)$.
	Let $r\in\hGamma^0$ be any point in the interior of an edge of $\hGamma^0$, or any bounded vertex where the \Zaffine map $\hh^0$ is balanced.
	Fix a point $v\in V^0\subset X\an_k$.
	Let $v'$ be a general point in the fiber of $\tau\colon X\an_k\to B$ over $\hh^0(r)\in B$.
	Let $\cM(\hL^0,\beta,T)_v$ and $\cM(\hL^0,\beta,T)_{v'}$ denote respectively the fibers of the evaluation map
	\[\ev_3\colon\cM(\hL^0,\beta,T)\to (Y')\an\]
	over the points $v$ and $v'$ in $(Y')\an$.
	The symmetry property of \cite[Theorem 6.3]{Yu_Enumeration_cylinders_2015} implies that
	\[\deg\cM(\hL^0,\beta,T)_v=\deg\cM(\hL^0,\beta,T)_{v'}\]
	when $r$ is the vertex $v^0_2\in\hGamma^0$.
	In fact, the same proof as that of \cite[Theorem 6.3]{Yu_Enumeration_cylinders_2015} shows that the equality above always holds.
\end{rem}

\section{The gluing formula}\label{sec:gluing}

With the preparations in the previous sections, we will prove the gluing formula in this section.

Consider the moduli space $\bcM_{0,5}$ of 5-pointed rational stable curves over $k$.
We label the 5 marked points by $s^1_1, s^1_2, s^2_1, s^2_2$ and $s_5$.
The graph $G$ (resp.\ $G'$) and the labeling of its legs in \cref{const:graphs} give rise to a 0-dimensional stratum in $\bcM_{0,5}$ which we denote by $m$ (resp.\ $m'$).

Let $V^\rd$ be as in \cref{prop:finite_etale} and consider the projection $V^\rd\to\tDelta$.
Let $w$ be a point in the fiber over $m$ and let $w'$ be a point in the fiber over $m'$.
Let $\widebar w$ and $\widebar w'$ be respectively the images of $w$ and $w'$ under the projection $V^\rd\to W^\rd$.

Consider the map
\[\Phi=(\mathrm{st},\ev_5)\colon\cM^\rd(\beta,T)_V\to V^\rd.\]
Let $\cM^\rd(\beta,T)_{w}$ and $\cM^\rd(\beta,T)_{w'}$ be the two fibers over the points $w$ and $w'$ respectively.

\begin{prop}\label{prop:point}
	Let $[C,(s^1_1, s^1_2,s^2_1,s^2_2,s_5),f\colon C\to (Y')\an]$ be a stable map in $\cM^\rd(\beta,\allowbreak T)_{w}$.
	Let $C^5$ be the irreducible component of $C$ containing the marked point $s_5$.
	Then we have $f(C^5)=\widebar w$.
	The same applies to $w'$.
\end{prop}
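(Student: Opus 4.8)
The plan is to trace through the tropicalization and use the combinatorial rigidity established earlier. Let $Z = (\Gamma,(v_1,\dots,v_5),h\colon\Gamma\to B_f)$ be the pointed rational tropical curve in $B_f$ with four boundary points associated to the stable map $(C,(s^1_1,s^1_2,s^2_1,s^2_2,s_5),f)$. Since the point $w$ lies over $m$ in $\tDelta$, the stabilization $\st(C)$ tropicalizes into $\Delta$; combined with \cref{lem:components_of_the_domain_curve}(\ref{item:components:stabilization}) — which says $C_m = \st(C)$, so that the convex hull $\Gamma^s$ of $v_1,\dots,v_4$ in $\Gamma$ is a contraction of $\tau_{\bcM_{0,5}}(\st(C))$ — the tropical curve $Z$ lies in $T^d_f$. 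In particular, $\Gamma^s$ has the combinatorial type dictated by the graph $G$: the vertex $v_5$ is a genuine vertex of $\Gamma^s$ lying in $P_{13}\cap P_{24}$, and the component $C^5$ containing $s_5$ corresponds to the vertex $v_5$ of $\Gamma$.

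Next I would identify where $f(C^5)$ sits. By \cref{lem:components_of_the_domain_curve}(\ref{item:components:unstable}) and the balancing analysis in its proof, an irreducible component not meeting $(\partial Y')\an$ has image contracted by $\tau$ either to $O$ or to a point of $B$; here $C^5$ contains the marked point $s_5$, and by \cref{const:subset_Td} together with the definition of $\cM^d(\beta,T)_W$ one has $h(v_5) = h^d(v^d)$, which is a point of $B_f^\circ\setminus O$. Since $\tau_Y\inv(h^d(v^d))$ is an affinoid domain in $(Y')\an$ and $C^5$ is a projective line mapping into this affinoid (its image under $\tau_Y$ being the single point $h(v_5)=h^d(v^d)$), the image $f(C^5)$ is a single point of $(Y')\an$ lying in $W^d = \tau_Y\inv(h^d(v^d))$.

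Finally I must pin that point down to be exactly $\widebar w$. By construction $\widebar w$ is the image of $w$ under $V^d\to W^d$, and $W^d\subset (Y')\an$ is the target of the $\ev_5$ component of $\Phi$; since the stable map lies in the fiber $\cM^d(\beta,T)_w$ of $\Phi = (\mathrm{st},\ev_5)$ over $w$, we have $\ev_5$ of the stable map equal to the $W^d$-coordinate of $w$, i.e.\ $f(s_5) = \widebar w$. As $s_5\in C^5$ and we have just shown $f(C^5)$ is a single point, that point must be $f(s_5) = \widebar w$. The same argument verbatim applies with $w'$, $m'$, $G'$ in place of $w$, $m$, $G$, since $G'$ gives the same conclusion that $v_5$ is a vertex of $\Gamma^s$ lying in the relevant path intersection and $h(v_5) = h^d(v^d)$.

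The main obstacle I anticipate is the second step: one must be careful that $C^5$ really is contracted by $\tau_Y\circ f$ to the single point $h^d(v^d)$ rather than sweeping out a nontrivial interval in $B_f$ — this is exactly where membership in $T^d_f$ (via \cref{prop:tropical_rigidity}) and the balancing conditions of \cref{def:rational_tropical_curve_in_Bf} are essential, ruling out $C^5$ having image meeting $(\partial Y')\an$ or running out along a ray from $O$. Everything else is bookkeeping with the definitions of the fiber products $\cM^d(\beta,T)_W$ and $\cM^d(\beta,T)_V$ and the map $\Phi$.
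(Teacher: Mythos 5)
There is a genuine gap, and it sits exactly at the one place where the hypothesis that $w$ lies over the zero-dimensional stratum $m$ has to do real work. Your balancing/affinoid argument needs as input that $C^5$ does not meet $(\partial Y')\an$, i.e.\ (by \cref{lem:components_of_the_domain_curve}(\ref{item:components:marked_points})) that $C^5$ contains none of the marked points $s^1_1,s^1_2,s^2_1,s^2_2$. You try to extract this from the tropicalization: you claim that $\Gamma^s$ ``has the combinatorial type dictated by the graph $G$'' and that ``$C^5$ corresponds to the vertex $v_5$ of $\Gamma$''. Neither claim is justified, and the first is false: by \cref{const:subset_Td}, $\Gamma^s$ is isomorphic to $(\Gamma^d,h^d)$, which has a four-valent vertex $v^d$ (the degenerate type $G_0$), not the type $G$ with two trivalent vertices. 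More to the point, membership of $Z$ in $T^d_f$ (which is automatic from the definition of $\cM^d(\beta,T)$, so your first paragraph proves nothing new) places no restriction on which marked points of $C$ share an irreducible component: for a point of $\cM^d(\beta,T)_V$ lying over a non-degenerate point of $\tDelta$ the domain is typically irreducible, $C^5=C$ meets the boundary, and $f(C^5)$ is certainly not a point --- so the conclusion genuinely fails without the degeneration hypothesis, and your final paragraph's claim that $T^d_f$-membership and \cref{prop:tropical_rigidity} are what rule out $C^5$ meeting the boundary identifies the wrong mechanism.

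The correct (and short) way to use the hypothesis, which is how the paper opens its proof, is at the level of the domain curve rather than its tropicalization: since $w$ lies in the fiber over $m$, the stabilization $\st(C)$ is the $5$-pointed stable curve of combinatorial type $G$, in which the component carrying $s_5$ carries no other marked point; as stabilization only contracts components, the same holds for $C^5\subset C$, and then \cref{lem:components_of_the_domain_curve}(\ref{item:components:marked_points}) gives $f(C^5)\cap(\partial Y')\an=\emptyset$. Once that is in place, the rest of your argument is essentially the paper's: the tropicalization of the proper curve $f|_{C^5}$ into $X\an_k$ is balanced away from $\tau\inv(O)$, hence its image is a single point, which contains $\tau(\widebar w)=h^d(v^d)\neq O$; the fiber $\tau\inv(h^d(v^d))$ is affinoid and contains no proper curve, so $f(C^5)$ is the point $f(s_5)=\widebar w$, the last equality coming from the fiber condition for $\Phi=(\mathrm{st},\ev_5)$ as you say. (Also note that \cref{lem:components_of_the_domain_curve}(\ref{item:components:unstable}) does not apply to $C^5$ as stated, since $C^5$ may have more than two special points; what you need is only the balancing analysis inside its proof, so that citation should be phrased accordingly.)
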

\begin{proof}
	Since $w\in V^\rd$ lies in the fiber over $m\in\bcM_{0,5}$, the irreducible component $C^5$ does not contain the other marked points $s^1_1,s^1_2,s^2_1,s^2_2$.
	By \cref{lem:components_of_the_domain_curve}(\ref{item:components:marked_points}), the image $f(C^5)$ lies in the interior $(Y'\setminus\partial Y')\an\simeq X\an_k$.
	The map $f|_{C^5}\colon C^5\to X\an_k$ gives rise to a tropical curve $h\colon\Gamma\to B$, where $\Gamma$ is a \Zaffine tree and $h$ is a \Zaffine immersion that is balanced everywhere outside of $h\inv(O)$.
	
	We claim that $\Gamma$ must be a point.
	Suppose the contrary.
	Then since $h$ is an immersion, the image $h(\Gamma)$ is not a point.
	So by \cref{lem:twig}, the image $h(\Gamma)$ is contained in a ray $R$ in $B$ starting from $O\in B$ with rational slope.
	Let $b$ be the point in $h(\Gamma)$ that is farthest from $O$, and let $v$ be a vertex in $\Gamma$ that maps to $b$.
	Then $h$ cannot be balanced at $v$, which is a contradiction.
	
	Since $f(s_5)=\widebar w$, the claim implies that the image $h(\Gamma)$ is equal to the point $\tau(\widebar w)\in B$.
	Since the preimage $\tau\inv(\tau(\widebar w))$ is affinoid, it cannot contain any proper curve.
	So we must have $f(C^5)=\widebar w$, completing the proof.
	The same argument applies to $w'$.
\end{proof}

Consider the evaluation map of the third marked point
\[\ev_3\colon\cM(\hL^1,\beta,T)\to(Y')\an.\]
Let $\cM(\hL^1,\beta,T)_{\widebar w'}$ be the fiber over the point $\widebar w'\in W^\rd$.
Similarly, we define $\cM(\hL^2,\beta,T)_{\widebar w'}$, $\cM(\hL^3,\beta,T)_{\widebar w}$ and $\cM(\hL^4,\beta,T)_{\widebar w}$.

By the construction of the subset $V^\rd\subset\tDelta\times W^\rd$ in \cref{prop:finite_etale}, the space $\cM(\hL^2,\beta,T)_{\widebar w'}$ (resp. $\cM(\hL^3,\beta,T)_{\widebar w}$, $\cM(\hL^4,\beta,T)_{\widebar w}$) is finite étale over the point $\widebar w'$ (resp.\ $\widebar w, \widebar w$).

For $i=1,\dots,3$, let $(\beta^i)'$ denote the curve class associated to the extension of the spine from $L^i$ to $\hL^i$ (see \cite[Definition 4.10]{Yu_Enumeration_cylinders_2015}).

\begin{prop}\label{prop:degree}
	We have the following equalities:
	\begin{align*}
		& \deg\cM(\hL^1,\beta,T)_{\widebar w'} = N(L^1,\beta-(\beta^1)'),\\
		& \deg\cM(\hL^2,\beta,T)_{\widebar w'} = N(L^2,\beta-(\beta^2)'),\\
		& \deg\cM(\hL^3,\beta,T)_{\widebar w} = N(L^3,\beta-(\beta^3)').
	\end{align*}
\end{prop}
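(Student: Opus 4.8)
The plan is to reduce each of the three equalities to \cref{thm:degree} together with the symmetry property recorded in \cref{rem:symmetry}, both applied not to $L^0$ but to the spine $L^i$ in question. Every construction and result of \cref{sec:tropical_constructions} through \cref{sec:smoothness} was stated for $L^0$ purely for notational economy; each goes through verbatim with $L^0$ replaced by any of $L^1,L^2,L^3$ (for \cref{prop:tropical_rigidity} this is already explicit, the statement being uniform in $i$). So \cref{thm:degree}, \cref{prop:finite_etale} and \cref{rem:symmetry} are all available for $L^1$, $L^2$ and $L^3$.

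The first step is to identify, inside $\hGamma^i$, the vertex sitting over the base point of $\cM(L^i,\beta,T)$. By construction (cf.\ \cref{const:subset_Ti}) the tropical condition defining $\cM(L^i,\beta,T)$ forces $h(v_3)=h^d(v^d)$, so $\ev_3$ takes values in the affinoid torus fibre $W^d=\tau_Y\inv(h^d(v^d))$. Now $h^d(v^d)=h^1(v^1_2)=h^2(v^2_1)$ is the image under $\hh^1$ of the endpoint $v^1_2$ of $L^1$, under $\hh^2$ of the endpoint $v^2_1$ of $L^2$, and under $\hh^3$ of the interior vertex $v^d$ of $\hGamma^3$. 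After the straight extension each of these is a bounded vertex of $\hGamma^i$ at which $\hh^i$ is balanced: for $i=1,2$ this is the defining property of the straight extension at an endpoint, and for $i=3$ it is precisely the gluing hypothesis $w_{v^1_2}(e^1_2)+w_{v^2_1}(e^2_1)=0$ of \cref{thm:gluing_formula}. In particular each is a vertex of the type admitted in \cref{rem:symmetry}.

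The reduction is then uniform in $i\in\{1,2,3\}$. \cref{thm:degree} applied to $L^i$ — with the base point placed over an endpoint of $L^i$ — produces a nonempty Zariski open $V^i$ in the corresponding torus fibre such that $\ev_3\colon\cM(L^i,\beta,T)_V\to V^i$ is finite étale of degree $N(L^i,\beta-(\beta^i)')$, where $(\beta^i)'$ is the curve class of the extension $L^i\to\hL^i$. Applying \cref{rem:symmetry} to $L^i$, with $r$ the vertex singled out above, gives, for $v$ in $V^i$ and $\widebar w$ (resp.\ $\widebar w'$) a sufficiently general point of $W^d$,
\[ \deg\cM(L^i,\beta,T)_{\widebar w}=\deg\cM(L^i,\beta,T)_v=N(L^i,\beta-(\beta^i)'). \]
It remains to check that $\widebar w$ and $\widebar w'$, chosen in \cref{sec:gluing}, can be taken general. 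The fibre $W^d$ is irreducible: $h^d(v^d)=h^1(v^1_2)$ lies in $B_f^\circ\setminus O$ by \cref{const:CB}, so $\tau_Y$ is a trivial affinoid torus fibration over a neighbourhood of it, making $W^d$ an affinoid torus. The point $\widebar w$ (resp.\ $\widebar w'$) is the image of a point in the fibre of the projection $V^d\to\tDelta$ over $m$ (resp.\ $m'$), and that fibre is a nonempty Zariski open of $W^d$ by \cref{prop:finite_etale} (where $V^d\to\tDelta$ is made surjective). Since $W^d$ is irreducible, these fibres meet the finitely many nonempty Zariski opens involved, namely the general loci in $W^d$ prescribed by \cref{rem:symmetry} for $L^1$, $L^2$ and $L^3$; hence $w$ can be chosen so that $\widebar w$ lies in the general locus for $L^3$, and $w'$ so that $\widebar w'$ lies in the general loci for both $L^1$ and $L^2$. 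This yields the three stated equalities.

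The genuine content lies in the case $i=3$: there $\cM(L^3,\beta,T)$ has its third marked point constrained to lie over the interior gluing vertex $v^d$ rather than over an endpoint of $L^3$, so it is \emph{not} literally the moduli space computing $N(L^3,-)$, and the only thing that bridges the two is the symmetry property of \cite{Yu_Enumeration_cylinders_2015} in the strengthened form of \cref{rem:symmetry}, which covers balanced interior vertices. Everything else is bookkeeping: transferring the results of \cref{sec:tropical_constructions} through \cref{sec:smoothness} to $L^1,L^2,L^3$, and making the several genericity requirements on $\widebar w$ and $\widebar w'$ compatible — which reduces to the irreducibility of $W^d$.
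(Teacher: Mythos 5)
Your argument is correct and is essentially the paper's own proof: both reduce the three equalities to \cref{thm:degree} applied with $L^0$ replaced by $L^1,L^2,L^3$ and then move the evaluation point to $\widebar w$ resp.\ $\widebar w'$ using the strengthened symmetry property of \cref{rem:symmetry}, the only genuinely nontrivial case being $L^3$, where the gluing hypothesis makes $v^d$ a balanced interior vertex of $\hGamma^3$. The differences are cosmetic: the paper handles $L^2$ directly (the constraint point $h^d(v^d)=h^2(v^2_1)$ is already an endpoint of $L^2$, so no symmetry is needed) and routes $L^1,L^3$ through a general point over $h^1(v^1_1)$, while you additionally make explicit the genericity of $\widebar w,\widebar w'$ via the irreducibility of $W^d$, a point the paper leaves implicit.
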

\begin{proof}
	Let $v$ be a general point in the fiber of the map $\tau\colon X\an_k\to B$ over $h^1(v^1_1)\in B$.
	Let $\cM(\hL^1,\beta,T)_{v}$ (resp.\ $\cM(\hL^3,\beta,T)_{v}$) be the fiber of the evaluation map
	\[\ev_3\colon\cM(\hL^1,\beta,T)\to(Y')\an \qquad \text{(resp.\ $\ev_3\colon\cM(\hL^3,\beta,T)\to(Y')\an$)}\]
	over the point $v$.
	From \cref{thm:degree}, replacing $L^0$ with $L^1, L^2, L^3$ respectively, we deduce the following equalities
	\begin{align*}
	& \deg\cM(\hL^1,\beta,T)_{v} = N(L^1,\beta-(\beta^1)'),\\
	& \deg\cM(\hL^2,\beta,T)_{\widebar w'} = N(L^2,\beta-(\beta^2)'),\\
	& \deg\cM(\hL^3,\beta,T)_{v} = N(L^3,\beta-(\beta^3)').
	\end{align*}
	By the symmetry property in \cite[\S 6]{Yu_Enumeration_cylinders_2015}, as we recalled in \cref{rem:symmetry}, we have
	\begin{align*}
	&\deg\cM(\hL^1,\beta,T)_{v} = \deg\cM(\hL^1,\beta,T)_{\widebar w'},\\
	& \deg\cM(\hL^3,\beta,T)_{v} = \deg\cM(\hL^3,\beta,T)_{\widebar w}.
	\end{align*}
	We conclude the proof by combining all the equalities above.
\end{proof}

Lemmas \ref{lem:automorphism1}, \ref{lem:automorphism2} and \cref{prop:glue_annuli} will serve the proof of \cref{prop:straight_spine}.

\begin{lem} \label{lem:automorphism1}
	Let $\Gmk^n\coloneqq\Spec k[X_1^{\pm 1},\dots,X_n^{\pm 1}]$.
	Let $\sigma\colon(\Gmk^n)\an\to\R^n$ be the continuous map given by taking coordinate-wise valuations.
	Let $\Omega\subset\R^n$ be a rational convex polyhedron, i.e., it is given by finitely many linear inequalities with rational coefficients and is compact.
		Assume that $\Omega$  has nonempty interior.
	Let $T_\Omega$ denote the affinoid algebra of the affinoid space $\sigma\inv(\Omega)$.
	Let $\phi$ be an automorphism of $\sigma\inv(\Omega)$ preserving the map $\sigma$.
	Then for any $\gamma\in\Z^n$, we have $\phi^*(X^\gamma)=c_\gamma\cdot X^\gamma (1+f_\gamma)$, where $c_\gamma$ is an element in $k$ of norm 1, and $f_\gamma$ is an element in $T_\Omega$ of spectral norm less than 1.
\end{lem}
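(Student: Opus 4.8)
The plan is to reduce the statement to a fact about invertible elements of $T_\Omega$ of constant absolute value, and then to establish that fact from the toric structure of the reduction of a polytopal torus domain. First I would note that for any $y\in(\Gmk^n)\an$ the value $\lvert X^\gamma(y)\rvert=\prod_i\lvert X_i(y)\rvert^{\gamma_i}=e^{-\langle\gamma,\sigma(y)\rangle}$ depends only on $\sigma(y)$. Since $\phi$ preserves $\sigma$, for every $x\in\sigma\inv(\Omega)$ we get
\[\lvert\phi^*(X^\gamma)(x)\rvert=\lvert X^\gamma(\phi(x))\rvert=e^{-\langle\gamma,\sigma(\phi(x))\rangle}=e^{-\langle\gamma,\sigma(x)\rangle}=\lvert X^\gamma(x)\rvert.\]
As $\phi^*$ is an automorphism of $T_\Omega$ and $X^\gamma$ is a unit, $\phi^*(X^\gamma)$ is a unit, so $g\coloneqq\phi^*(X^\gamma)\cdot X^{-\gamma}$ is an invertible element of $T_\Omega$ with $\lvert g(x)\rvert=1$ for all $x\in\sigma\inv(\Omega)$. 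It then suffices to show that such a $g$ can be written $g=c(1+f)$ with $c\in k$, $\lvert c\rvert=1$, and $\norm{f}_{\sup}<1$ (the supremum seminorm being the spectral one): one takes $c_\gamma\coloneqq c$, $f_\gamma\coloneqq f$, so that $\phi^*(X^\gamma)=gX^\gamma=c_\gamma X^\gamma(1+f_\gamma)$.

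Next I would analyze the Laurent expansion $g=\sum_{j\in\Z^n}a_jX^j$. The supremum norm on $T_\Omega$ is monomial, $\norm{g}_{\sup}=\max_j\norm{a_jX^j}_{\sup}$ with $\norm{a_jX^j}_{\sup}=\lvert a_j\rvert\,e^{-\min_{u\in\Omega}\langle j,u\rangle}$, the minimum attained at a vertex of $\Omega$; this follows from strict multiplicativity at the vertices $u$ of the Gauss seminorm $\xi_u\colon\sum a_jX^j\mapsto\max_j\lvert a_j\rvert e^{-\langle j,u\rangle}$, the maximal point of the fiber $\sigma\inv(u)$. The function $u\mapsto\log\lvert g(\xi_u)\rvert=\max_j\bigl(\log\lvert a_j\rvert-\langle j,u\rangle\bigr)$ is convex and piecewise linear; applying the same to $g\inv$ and using $\lvert g(\xi_u)\rvert\cdot\lvert g\inv(\xi_u)\rvert=1$ shows it is also concave, hence affine with integral slope, and it is identically $0$ since $\lvert g(x)\rvert\equiv1$. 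Comparing slopes at a generic point gives $a_0\neq 0$, $\lvert a_0\rvert=1$, and for every $j\neq0$ with $a_j\neq0$ the affine function $\log\lvert a_j\rvert-\langle j,\cdot\rangle$ is $\leq0$ on $\Omega$; since a nonconstant affine function cannot attain its maximum at an interior point, it is $<0$ on $\Int\Omega$ — this is where the hypothesis that $\Omega$ has nonempty interior is used.

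It remains to upgrade this to $\log\lvert a_j\rvert-\langle j,v\rangle<0$ at every vertex $v$ of $\Omega$ and every $j\neq0$ with $a_j\neq 0$, for then $\norm{g-a_0}_{\sup}<1$ and the proof is finished with $c\coloneqq a_0$, $f\coloneqq a_0\inv(g-a_0)$. Suppose instead $\log\lvert a_{j^*}\rvert=\langle j^*,v\rangle$ for some vertex $v$ and some $j^*\neq0$. I would pass to the canonical $\kc$-formal model of $\sigma\inv(\Omega)$, which after a finite extension of $k$ is the polytopal toric degeneration: its special fiber is a union of affine toric varieties $Z_v$ over $\C$ indexed by the vertices of $\Omega$, each with dense torus $(\C^\times)^n$, and the reduction $\bar g$ restricted to the dense torus of $Z_v$ is the initial form $\sum_{j:\,\log\lvert a_j\rvert=\langle j,v\rangle}\bar a_j X^j$, a Laurent polynomial over $\C$. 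By the previous step it contains the nonzero constant term $\bar a_0$, and by assumption it also contains the monomial $\bar a_{j^*}X^{j^*}$ with $j^*\neq0$, hence has at least two terms and therefore vanishes somewhere on $(\C^\times)^n$. Since the reduction map $\sigma\inv(\Omega)\to(\text{special fiber})$ is surjective, such a point lifts to $x\in\sigma\inv(\Omega)$ with $\lvert g(x)\rvert<1$, contradicting the normalization $\lvert g(x)\rvert\equiv1$.

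I expect the main obstacle to be this last step: making precise the identification of the reduction of the polytopal domain $\sigma\inv(\Omega)$ with the polytopal toric degeneration and the description of $\bar g$ on each component as an initial form. This is standard (cf.\ Gubler's work on tropicalizations and formal models), and a more self-contained variant is available — restrict $g$ to the fiber $\sigma\inv(v)$ over a vertex and reduce the individual monomials $a_jX^j$ there — but in any form it is the one genuinely non-formal ingredient, and it is precisely where full-dimensionality of $\Omega$ enters: so that each $Z_v$ is $n$-dimensional and the ``at least two monomials force a zero on the torus'' argument applies, equivalently so that the only invertible element of $T_\Omega$ of constant absolute value over a full-dimensional $\Omega$ is, up to $1+($topologically nilpotent$)$, a scalar.
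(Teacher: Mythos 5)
Your proof is correct, but it takes a genuinely different route from the paper. The paper works directly with the Laurent expansion of $\phi^*(X^\gamma)$ and shows there is a single monomial that strictly dominates all others at every rigid point: if two monomials tied at some point, fixing $n-1$ coordinates and varying the last reduces to the one-variable statement of \cite[Lemma 9.7.1/1]{Bosch_Non-archimedean_1984} (the Newton-polygon fact that such a tie forces a zero on the annulus), contradicting invertibility; then $\sigma$-preservation and full-dimensionality of $\Omega$ identify the dominant monomial as $c_\gamma X^\gamma$ with $\abs{c_\gamma}=1$, and strict domination gives $\norm{f_\gamma}<1$. You instead normalize to a unit $g=\phi^*(X^\gamma)X^{-\gamma}$ of constant absolute value $1$, use the Gauss (Shilov) points $\xi_u$ and piecewise-linear convexity of $u\mapsto\log\abs{g(\xi_u)}$ to pin down the constant term ($a_0\neq 0$, $\abs{a_0}=1$, all other exponents strictly negative on $\Int\Omega$), and then exclude equality at a vertex by a reduction/initial-form argument: the reduction of $g$ at a vertex would be a Laurent polynomial over $\C$ with at least two terms, hence would vanish on the torus, lifting to a point with $\abs{g}<1$. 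That last step is exactly where the two proofs diverge: the paper's slicing-plus-BGR argument replaces your appeal to formal models, and your more self-contained variant (restricting to the fiber $\sigma\inv(v)$ over a vertex, rescaling coordinates after a finite extension so the fiber becomes a unit torus with residue field $\C$, and reducing coefficient-wise) does close the gap cleanly, since the ``two monomials force a torus zero'' fact holds over the algebraically closed residue field and any lift of that zero lies in $\sigma\inv(\Omega)$. What each approach buys: the paper's is shorter and leans on a standard one-variable reference; yours is more geometric, isolates the cleaner structural statement that an invertible function of constant norm on a full-dimensional polytopal torus domain is a unit scalar times $1+(\text{topologically nilpotent})$, and makes transparent where full-dimensionality enters. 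Two cosmetic remarks: the convex-plus-concave step is redundant, since $\abs{g}\equiv 1$ already forces $\log\abs{g(\xi_u)}\equiv 0$; and in the ``generic point'' step one should note that only finitely many exponents can attain the maximum on the compact $\Omega$ (the monomial sup norms tend to $0$), so the covering argument by polyhedral pieces is legitimate.
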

\begin{proof}
	Write $\phi^*(X^\gamma)=\sum_{\nu\in\Z^n}c_\nu X^\nu$ with $c_\nu\in k$.
	Since $\phi$ preserves the map $\sigma$, $\phi^*(X^\gamma)$ has no zeros in $\sigma\inv(\Omega)$.
	We claim that there exists $\nu_0\in\Z^n$ such that $\abs{(c_{\nu_0} X^{\nu_0})(x)}>\abs{(c_\nu X^\nu)(x)}$ for all $\nu\neq\nu_0$ and all rigid points (i.e.\ points associated to maximal ideals) $x\in\sigma\inv(\Omega)$.
	Otherwise, there is a rigid point $x\in\sigma\inv(\Omega)$, $\nu_1,\nu_2\in\Z^n$ such that $\abs{(c_{\nu_1} X^{\nu_1})(x)}=\abs{(c_{\nu_2} X^{\nu_2})(x)}\ge\abs{(c_\nu X^\nu)(x)}$ for all $\nu\in\Z^n$.
	Fixing the first $(n-1)$ coordinates of $x$ and varying the last one, by \cite[Lemma 9.7.1/1]{Bosch_Non-archimedean_1984}, we see that $\phi^*(X^\gamma)$ is not invertible, which is a contradiction.
	So we have justified our claim.
	Using again the assumption that $\phi$ preserves the map $\sigma$, we see that $\nu_0=\gamma$ and $\abs{c_\gamma}=1$, completing the proof.
\end{proof}

\begin{lem} \label{lem:automorphism2}
	Let $\bbA_k^n\coloneqq\Spec k[X_1,\dots,X_n]$.
	Let $\widebar\sigma\colon(\bbA_k^n)\an\to(-\infty,+\infty]^n$ be the continuous map given by taking coordinate-wise valuations.
	Let $\oOmega\subset(-\infty,+\infty]^n$ be a rational convex polyhedron, i.e., it is given by finitely many linear inequalities with rational coefficients and is compact.
	Assume that $\widebar\Omega$  has nonempty interior.
	Let $T_{\oOmega}$ be the affinoid algebra of the affinoid space $\widebar\sigma\inv(\oOmega)$.
	Let $f_1,\dots,f_n\in T_{\oOmega}$ be $n$ elements of spectral norm less than 1.
	Let $\varphi$ be the endomorphism of $T_{\oOmega}$ given by $X_i\mapsto X_i(1+f_i)$.
	Then $\varphi$ is an automorphism.
	It induces an automorphism of $\widebar\sigma\inv(\oOmega)$ preserving the map $\widebar\sigma$.
\end{lem}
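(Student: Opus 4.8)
The plan is to pin down the geometry of $\widebar\sigma\inv(\oOmega)$, then obtain $\varphi$ together with its compatibility with $\widebar\sigma$ at one stroke via a universal property, and finally prove invertibility by a contraction argument followed by a short bootstrap. First I would note that, since $\oOmega$ is compact, each coordinate is bounded below on $\oOmega$ by a rational number, so $\widebar\sigma\inv(\oOmega)$ is a Laurent (hence affinoid) subdomain of a closed polydisc in $(\bbA_k^n)\an$; in particular $T_{\oOmega}$ is a \emph{reduced} $k$-affinoid algebra, its spectral seminorm $\abs{\cdot}_{sp}$ is a complete $k$-algebra norm, and on a Laurent expansion $\sum_\nu a_\nu X^\nu$ it is computed monomial-wise as $\max_\nu\abs{a_\nu X^\nu}_{sp}$. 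Next, for any $f\in T_{\oOmega}$ with $\abs{f}_{sp}<1$ and any point $x\in\widebar\sigma\inv(\oOmega)$ one has $\abs{f(x)}<1=\abs{1}$, hence $\abs{(1+f)(x)}=1$; so $1+f$ vanishes nowhere, $1+f\in T_{\oOmega}^\times$ with $\abs{(1+f)^{-1}}_{sp}=1$, and therefore $\abs{X_i(1+f_i)}_{sp}=\abs{X_i}_{sp}$ for every $i$.

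With this, the well-definedness of $\varphi$ together with its compatibility with $\widebar\sigma$ both follow from one step. The $k$-algebra map $k[X_1,\dots,X_n]\to T_{\oOmega}$, $X_i\mapsto X_i(1+f_i)$, corresponds to a morphism of $k$-analytic spaces $g\colon\widebar\sigma\inv(\oOmega)\to(\bbA_k^n)\an$; for $x\in\widebar\sigma\inv(\oOmega)$ the $i$-th coordinate of $\widebar\sigma(g(x))$ equals $\val\bigl(X_i(1+f_i)(x)\bigr)=\val\bigl(X_i(x)\bigr)$, the $i$-th coordinate of $\widebar\sigma(x)$. Thus $\widebar\sigma\circ g=\widebar\sigma$, so $g$ has image in $\widebar\sigma\inv(\oOmega)$; since the latter is an affinoid subdomain of $(\bbA_k^n)\an$, the morphism $g$ factors through it, giving an endomorphism $\varphi$ of $T_{\oOmega}$ with $\varphi(X_i)=X_i(1+f_i)$ and an induced self-map of $\widebar\sigma\inv(\oOmega)$ commuting with $\widebar\sigma$. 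The same argument applies verbatim to any endomorphism of the form $X_i\mapsto X_i(1+g_i)$ with $\abs{g_i}_{sp}<1$; I would record this, along with the resulting inequality $\abs{\psi(g)}_{sp}\le\abs{g}_{sp}$ for every such $\psi$ and all $g\in T_{\oOmega}$ (it holds because $\psi$ induces a self-map of $\widebar\sigma\inv(\oOmega)$).

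It remains to prove $\varphi$ is invertible. Put $\rho\coloneqq\max_i\abs{f_i}_{sp}<1$ and let $B\coloneqq\Set{(G_1,\dots,G_n)\in T_{\oOmega}^n | \abs{G_i}_{sp}\le\rho\ \text{for all}\ i}$, a complete metric space for $d(G,G')\coloneqq\max_i\abs{G_i-G'_i}_{sp}$. For $G\in B$ write $\psi_G$ for the endomorphism $X_i\mapsto X_i(1+G_i)$, and define $F\colon B\to T_{\oOmega}^n$ by $F(G)_j\coloneqq(1+\psi_G(f_j))^{-1}-1$. Using $\abs{\psi_G(f_j)}_{sp}\le\abs{f_j}_{sp}\le\rho$ one checks $F(B)\subset B$; and using the identity $a^{-1}-b^{-1}=a^{-1}(b-a)b^{-1}$ with $a=1+\psi_G(f_j)$, $b=1+\psi_{G'}(f_j)$ (both of spectral norm $1$), together with the monomial-wise formula for $\abs{\cdot}_{sp}$ and a telescoping estimate for $\prod_i(1+G_i)^{\nu_i}-\prod_i(1+G'_i)^{\nu_i}$, one gets $\abs{F(G)_j-F(G')_j}_{sp}\le\abs{\psi_G(f_j)-\psi_{G'}(f_j)}_{sp}\le\rho\,d(G,G')$, so $F$ is a contraction. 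Let $G^\ast$ be its fixed point; then $1+G^\ast_j=(1+\psi_{G^\ast}(f_j))^{-1}$, whence $\psi_{G^\ast}(\varphi(X_j))=X_j(1+G^\ast_j)(1+\psi_{G^\ast}(f_j))=X_j$, i.e.\ $\psi_{G^\ast}\circ\varphi=\id$. So every endomorphism of the stated form has a left inverse of the same form; applying this to $\varphi$ gives a left inverse $\psi$, applying it again to $\psi$ gives $\chi$ with $\chi\circ\psi=\id$, and then $\chi=\chi\circ\psi\circ\varphi=\varphi$, so $\varphi\circ\psi=\id$ as well. Hence $\varphi$ is an automorphism of $T_{\oOmega}$, and the induced self-map of $\widebar\sigma\inv(\oOmega)$ is the desired automorphism preserving $\widebar\sigma$.

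The step I expect to be the main obstacle is the contraction estimate in the last paragraph: one must ensure the ambient norm is genuinely complete — hence the care taken to identify $T_{\oOmega}$ as a reduced affinoid algebra and $\abs{\cdot}_{sp}$ as a complete norm — and that the monomial-wise computation of $\abs{\cdot}_{sp}$ on the polytopal domain $\widebar\sigma\inv(\oOmega)$ is valid, so that the substitution $X_i\mapsto X_i(1+G_i)$ is spectral-norm non-increasing and the telescoping bound for $\prod_i(1+G_i)^{\nu_i}-\prod_i(1+G'_i)^{\nu_i}$ holds uniformly in $\nu$. Everything else is formal.
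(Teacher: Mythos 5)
Your proof is correct, but it inverts $\varphi$ by a different mechanism than the paper. The paper's proof is a one-line operator argument: write $\varphi=\id+B$ on the Banach space $T_{\oOmega}$, observe (using the same monomial-wise description of the spectral norm that you isolate) that $B$ has operator norm $<1$, invert by the Neumann series, and transfer multiplicativity from $\varphi$ to $\varphi\inv$. You instead first establish well-definedness and $\widebar\sigma$-equivariance via the universal property of the affinoid subdomain $\widebar\sigma\inv(\oOmega)\subset(\bbA^n_k)\an$ (the paper simply asserts this at the end), and then produce the inverse inside the family of substitution endomorphisms $X_i\mapsto X_i(1+g_i)$ by a Banach fixed-point argument on the tuple $(G_1,\dots,G_n)$. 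Both routes rest on the same structural inputs (reducedness of $T_{\oOmega}$, completeness of the spectral norm, and its monomial-wise computation on polytopal domains, which the paper leaves implicit in the claim $\norm{B}<1$); the paper's is shorter, while yours buys more: the inverse is automatically a ring homomorphism, is again of the form $X_i\mapsto X_i(1+g_i)$ with $\abs{g_i}_{sp}<1$, and the compatibility with $\widebar\sigma$ comes for free. One small step you elide: from $\psi_{G^\ast}(\varphi(X_j))=X_j$ you conclude $\psi_{G^\ast}\circ\varphi=\id$, which requires knowing that an endomorphism of $T_{\oOmega}$ is determined by the images of $X_1,\dots,X_n$; this is \emph{not} by density of polynomials (which fails, e.g., on annuli), but follows in one line from the uniqueness clause of the universal property you already invoked, or from continuity together with the monomial expansions, so it is an omission rather than a gap.
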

\begin{proof}
	Decompose $\varphi=\id+B$ as an operator on the Banach space $T_{\oOmega}$.
		Since $f_i$ has spectral norm less than 1, we see that $B$ has operator norm less than 1.
	Therefore, the operator $\varphi$ has an inverse $\varphi\inv=\sum_{j=0}^\infty (-1)^j B^j$.
	Since $\varphi$ preserves the multiplication on $T_{\oOmega}$, so does $\varphi\inv$.
	We conclude that $\varphi$ is an automorphism of the affinoid algebra $T_{\oOmega}$ and it induces an automorphism of the affinoid space $\widebar\sigma\inv(\oOmega)$ preserving the map $\widebar\sigma$.
\end{proof}

\begin{prop}\label{prop:glue_annuli}
	Let $n\in\Z_{\ge 0}$, $r_1,\dots,r_n,r'_1,\dots,r'_n,p,q,s,t\in\sqrt{\abs{k}}\cup\{+\infty\}$, (the notation $a\in\sqrt{\abs{k}}$ means that some positive integer power of $a$ lies in $\abs{k}$).
	Assume $0<r'_i<r_i<+\infty$, $0\le p<s<t<q\le+\infty$.
	Let $R\coloneqq\prod_{i=1}^n[r'_i,r_i]\subset\R^n$.
	Let $X$ be a \kanal space and $\rho\colon X\to R\times[p,q]$ a continuous map.
	Let $\pi\colon(\Spec k[T_1^{\pm 1},\dots,T_n^{\pm 1},S^{\pm 1}])\an\to(0,+\infty)^{n+1}$ be the continuous map given by taking coordinate-wise norms.
	Let $\widebar\pi\colon((\bbP^1_k)\an)^{n+1}\to[0,+\infty]^{n+1}$ be the continuous extension of $\pi$.
	Assume that $\rho\inv(R\times[p,t])$ and $\rho\inv(R\times[s,q])$ are affinoid subdomains of $X$, and we have isomorphisms
	\[\begin{tikzcd}[column sep=small]
	\rho\inv(R\times[p,t])\arrow{rr}{\alpha}[swap]{\sim} \arrow{rd}[swap]{\rho} && \widebar\pi\inv(R\times[p,t])\arrow{ld}{\widebar\pi}\\
	& R\times[p,t]
	\end{tikzcd}\]
	and
	\[\begin{tikzcd}[column sep=small]
	\rho\inv(R\times[s,q])\arrow{rr}{\beta}[swap]{\sim} \arrow{rd}[swap]{\rho} && \widebar\pi\inv(R\times[s,q])\arrow{ld}{\widebar\pi}\\
	& R\times[s,q]
	\end{tikzcd}.\]
	Then we have an isomorphism
	\[\begin{tikzcd}[column sep=small]
	X\arrow{rr}{\sim} \arrow{rd}[swap]{\rho} && \widebar\pi\inv(R\times[p,q])\arrow{ld}{\widebar\pi}\\
	& R\times[p,q]
	\end{tikzcd}.\]
\end{prop}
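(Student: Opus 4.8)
The plan is to glue the two given trivializations. Write $U_1\coloneqq\rho\inv(R\times[p,t])$ and $U_2\coloneqq\rho\inv(R\times[s,q])$, so $U_1\cup U_2=X$ and $U_1\cap U_2=\rho\inv(R\times[s,t])$; write $A_1,A_2,V$ for the analogous subsets $\widebar\pi\inv(R\times[p,t])$, $\widebar\pi\inv(R\times[s,q])$, $\widebar\pi\inv(R\times[s,t])$ of $\widebar\pi\inv(R\times[p,q])$, and note that since $0<s<t<+\infty$ the last one equals the affinoid polyannulus $\pi\inv(R\times[s,t])$, with no point over $0$ or $+\infty$ in the final coordinate. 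As $s<t$, both $\{U_1,U_2\}$ and $\{A_1,A_2\}$ are G-coverings. So it is enough to produce automorphisms $\delta_1$ of $A_1$ and $\delta_2$ of $A_2$, each commuting with $\widebar\pi$, such that $\delta_1\circ\alpha$ and $\delta_2\inv\circ\beta$ agree on $U_1\cap U_2$; these then glue to the desired isomorphism over $R\times[p,q]$. A short computation shows that the agreement is equivalent to the factorization $\gamma=\delta_2|_V\circ\delta_1|_V$ of the transition automorphism $\gamma\coloneqq\beta\circ\alpha\inv|_V$ of $V$ (which is an automorphism commuting with $\pi$). Thus the whole problem reduces to factoring $\gamma$ through the two ``halves'' $A_1$ and $A_2$.

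To set this up I would first apply \cref{lem:automorphism1} to $V$ — which via the coordinatewise valuation map is of the form $\sigma\inv(\Omega)$ for a compact rational polyhedron $\Omega$ with nonempty interior — obtaining $\gamma^*(T_i)=a_iT_i(1+\phi_i)$ and $\gamma^*(S)=aS(1+\psi)$ with $|a_i|=|a|=1$ and $\phi_i,\psi$ of spectral norm $<1$ on $V$. The diagonal automorphism $T_i\mapsto a_iT_i,\ S\mapsto aS$ extends over all of $\widebar\pi\inv(R\times[p,q])$ commuting with $\widebar\pi$ (scaling by a unit of norm $1$ fixes $0$ and $\infty$ on each $\bbP^1$ factor), so after composing $\gamma$ with its inverse we may assume $a_i=a=1$. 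Only the final coordinate ever meets $\{0,\infty\}$, so the relevant ``splitting'' is the Laurent decomposition in $S$: writing a function on $V$ as $g=\sum_{m\in\Z}g_mS^m$ with $g_m$ on the $T$-polyannulus, the part $g^{>0}\coloneqq\sum_{m>0}g_mS^m$ extends to $A_1$ and $g^{\le 0}\coloneqq\sum_{m\le 0}g_mS^m$ extends to $A_2$, each with spectral norm $\le\|g\|_V$. It is precisely here that the hypotheses $s>0$ and $t<+\infty$ are used, as they make $|S|=s$ and $|S|=t$ points of the affinoid $V$; when $q=+\infty$ it is cleaner to work with $S\inv$ on that $\bbP^1$ factor.

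The core step is then to factor $\gamma=\widehat\gamma_2\circ\widehat\gamma_1$ with $\widehat\gamma_1\in\Aut(A_1)$, $\widehat\gamma_2\in\Aut(A_2)$, both commuting with $\widebar\pi$, by successive approximation. Set $\lambda_0\coloneqq\max_i(\|\phi_i\|,\|\psi\|)<1$ and $\widehat\gamma_1^{[0]}=\widehat\gamma_2^{[0]}=\id$. Given $\widehat\gamma_1^{[n]}\in\Aut(A_1)$ and $\widehat\gamma_2^{[n]}\in\Aut(A_2)$ for which the residual $\rho^{[n]}\coloneqq(\widehat\gamma_2^{[n]})\inv\circ\gamma\circ(\widehat\gamma_1^{[n]})\inv$ on $V$ has defining data of spectral norm $\le\lambda_0^{2^n}$, I would split each of the $n+1$ coordinate functions of $\rho^{[n]}$ by $g=g^{>0}+g^{\le 0}$ and invoke \cref{lem:automorphism2} (in the $S$-coordinate after $S\mapsto S\inv$ if needed) to get $g_1^{[n]}\in\Aut(A_1)$, $g_2^{[n]}\in\Aut(A_2)$ with data of norm $\le\lambda_0^{2^n}$ and $g_2^{[n]}\circ g_1^{[n]}=\rho^{[n]}$ up to data of norm $\le\lambda_0^{2^{n+1}}$; then put $\widehat\gamma_1^{[n+1]}\coloneqq g_1^{[n]}\circ\widehat\gamma_1^{[n]}$ and $\widehat\gamma_2^{[n+1]}\coloneqq\widehat\gamma_2^{[n]}\circ g_2^{[n]}$, composing the corrections on the outside so that each of $\widehat\gamma_1^{[n]},\widehat\gamma_2^{[n]}$ remains a single product of $A_1$- resp.\ $A_2$-automorphisms. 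The new residual $\rho^{[n+1]}=(g_2^{[n]})\inv\circ\rho^{[n]}\circ(g_1^{[n]})\inv$ is a conjugate of the error term, so again has data of norm $\le\lambda_0^{2^{n+1}}$. By ultrametric completeness the $\widehat\gamma_j^{[n]}$ converge to automorphisms $\widehat\gamma_j$ of $A_j$ with data of norm $\le\lambda_0<1$ — genuine automorphisms by the geometric-series argument of \cref{lem:automorphism2} — while $\rho^{[n]}\to\id$; in the limit $\widehat\gamma_2\inv\circ\gamma\circ\widehat\gamma_1\inv=\id$ on $V$, and $\delta_1\coloneqq\widehat\gamma_1$, $\delta_2\coloneqq\widehat\gamma_2$ finish the proof.

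The step I expect to be the main obstacle is this factorization. The naive guess — take $\widehat\gamma_1$ to be the ``Laurent-positive part'' of $\gamma$ and $\widehat\gamma_2$ the ``Laurent-nonpositive part'' — fails because the group of automorphisms over the base is non-abelian, so recombining the two halves creates quadratic cross-terms. Making the successive-approximation scheme work requires carefully controlling how the spectral norms of the defining data behave under composition, inversion and conjugation in these ``pro-unipotent'' automorphism groups, so that the errors genuinely contract like $\lambda_0^{2^n}$, and it requires accumulating the $A_1$- and $A_2$-corrections on opposite sides so that each limit is a single automorphism of the relevant space rather than an interleaved infinite product. The remaining pieces — the G-covering and gluing formalism, the reduction to $a_i=a=1$, and the Laurent norm estimates — are routine once \cref{lem:automorphism1} and \cref{lem:automorphism2} are in hand.
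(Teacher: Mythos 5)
Your reduction (glue the two trivializations; equivalently factor the transition automorphism $\gamma=\beta\circ\alpha\inv$ of the overlap as $\delta_2|_V\circ\delta_1|_V$ with $\delta_1,\delta_2$ automorphisms of the two halves commuting with $\widebar\pi$) is sound, and your ingredients are the same as the paper's: \cref{lem:automorphism1} to normalize $\gamma$ to the form ``identity times $(1+\text{small})$'', the Laurent splitting in the $S$-coordinate of functions on the overlap annulus into parts extending to either side with controlled spectral norm, and \cref{lem:automorphism2} to recognize the corrected maps as isomorphisms. Where you genuinely diverge is the key step: you factor $\gamma$ multiplicatively by a Newton-type successive approximation in the non-abelian automorphism group, accumulating corrections on opposite sides and passing to a limit, whereas the paper needs no iteration at all. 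Following the proof of Temkin's Lemma 3.2, it works with the coordinate functions rather than the automorphism: writing the two charts via coordinates $f_i,f$ on $\cB$ and $g_i,g$ on $\cC$, it splits the \emph{additive} differences $f_i-g_i=b_i+c_i$ and $f-g\inv=b+c$ once (with $b$'s extending to the $[p,t]$ side and $c$'s to the $[s,q]$ side, norms bounded by the overlap norm), and then the corrected coordinates $f_i-b_i=g_i+c_i$ and $f-b=g\inv+c$ \emph{literally coincide} on the overlap, while \cref{lem:automorphism2} guarantees they still present each half as the standard polyannulus; the glued space is then manifestly the standard model. Your route is the classical Cousin/Birkhoff-style argument and can certainly be completed, but the part you yourself flag as the main obstacle --- controlling spectral norms of the defining data under composition, inversion and conjugation so that the residuals contract (quadratic contraction $\lambda_0^{2^n}$ is more than you need; geometric contraction suffices) and so that the one-sided infinite products converge to genuine automorphisms --- is exactly the bookkeeping the paper's one-shot additive correction sidesteps. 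So: same lemmas and same splitting principle, but the paper trades your infinite multiplicative factorization for a single additive correction of coordinates, which is shorter and avoids the delicate convergence estimates; your version, once those estimates are written out, is more robust-looking machinery (it never uses that the correction is additive in the coordinates) but strictly heavier for this statement.
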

\begin{proof}
	Let $\cA,\cB,\cC$ be the affinoid algebras of the affinoid spaces $\rho\inv(R\times[s,t]), \rho\inv(R\times[p,t]), \rho\inv(R\times[s,q])$ respectively.
	Let $\norm{\cdot}_\cA$, $\norm{\cdot}_\cB$, $\norm{\cdot}_\cC$ denote the spectral norms on $\cA,\cB,\cC$ respectively.
	Assume that the isomorphisms $\alpha$ and $\beta$ are given by the following isomorphisms of affinoid algebras:
	\begin{gather*}
	k\langle r_1\inv T_1,r'_1 T_1\inv,\dots,r_n\inv T_n,r'_n T_n\inv,t\inv S,pS\inv\rangle\xrightarrow{\ \phi\ }\cB,\quad T_i\mapsto f_i,\ S\mapsto f,\\
	k\langle r_1\inv T_1,r'_1 T_1\inv,\dots,r_n\inv T_n,r'_n T_n\inv,s S',q\inv (S')\inv\rangle\xrightarrow{\ \psi\ }\cC,\quad T_i\mapsto g_i,\ S'\mapsto g.
	\end{gather*}
	In the case $p=0$, we remove the term $p S\inv$ from the expression above.
	In the case $q=+\infty$, we remove the term $q\inv (S')\inv$ from the expression above.
	Consider the isomorphism
	\[\cB\langle s f\inv\rangle\to\cC\langle t\inv g\inv\rangle,\quad f_i\mapsto g_i,\  f\mapsto g\inv.\]
	Up to multiplying $g_1,\dots,g_n$ and $g$ by elements in $k$ of norm 1, it follows from \cref{lem:automorphism1} that we can assume that $\norm{f_i-g_i}_\cA<\norm{f_i}_\cA$ and $\norm{f-g\inv}_\cA<\norm{f}_\cA$.
	Now we proceed as in the proof of \cite[Lemma 3.2]{Temkin_Local_2000}.
	Let $\cB_+$ and $\cC_+$ be the subspaces of $\cB$ and $\cC$ consisting of elements of the form $\sum_{\nu\in\Z^n,j\ge 0}\lambda_{\nu,j}f_1^{\nu_1}\cdots f_n^{\nu_n} f^j$ and $\sum_{\nu\in\Z^n,j\ge 0}\mu_{\nu,j}g_1^{\nu_1}\cdots g_n^{\nu_n} g^j$ respectively.
	It is shown in the proof of \cite[Lemma 3.2]{Temkin_Local_2000} that each element $a\in\cA$ can be decomposed as $a=b+c$ with $b\in\cB_+$, $c\in\cC_+$, and $\norm{b}_\cB,\norm{c}_\cC\le\norm{a}_\cA$.
	So we have $f_i-g_i=b_i+c_i$ and $f-g\inv=b+c$, where $b_i,b\in\cB_+$, $c_i,c\in\cC_+$, $\norm{b_i}_\cB,\norm{c_i}_\cC\le\norm{f_i-g_i}_\cA$ and $\norm{b}_\cB,\norm{c}_\cC\le\norm{f-g\inv}_\cA$.
	By \cref{lem:automorphism2}, the homomorphisms
	\[
	k\langle r_1\inv T_1,r'_1 T_1\inv,\dots,r_n\inv T_n,r'_n T_n\inv,t\inv S,pS\inv\rangle\xrightarrow{\ \phi\ }\cB,\quad T_i\mapsto f_i-b_i,\ S\mapsto f-b,\]
	\begin{multline*}
	k\langle r_1\inv T_1,r'_1 T_1\inv,\dots,r_n\inv T_n,r'_n T_n\inv,s S',q\inv (S')\inv\rangle\xrightarrow{\ \psi\ }\cC,\\ T_i\mapsto g_i+c_i,\ S'\mapsto (g\inv+c)\inv=g(1+gc)\inv
	\end{multline*}
	are isomorphisms.
	So we deduce the isomorphism claimed in the proposition.
\end{proof}

\begin{prop}\label{prop:straight_spine}
	Let $\beta^4=(\beta^1)'+(\beta^2)'-(\beta^3)'$.
	We have
	\[\deg\cM(\hL^4,\beta,T)_{\widebar w}=
	\begin{cases}
		1 & \text{for } \beta=\beta^4,\\
		0 & \text{otherwise.}
	\end{cases}\]
\end{prop}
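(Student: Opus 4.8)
The plan is to give an explicit description of $\cM(L^4,\beta,T)_{\widebar w}$, from which the stated dichotomy becomes immediate. A point of this space is a $3$-pointed rational analytic stable map $\big(C,(s_1,s_2,s_3),f\colon C\to(Y')\an\big)$ with tropical curve in $T^4_f$, with $f(s_1)\in D^4_1$ and $f(s_2)\in D^4_2$ of the prescribed multiplicities, with $f(s_3)=\widebar w$, and with image class mapping to $\beta$. First I would observe that the argument of \cref{lem:components_of_the_domain_curve} applies verbatim with $L^0$ replaced by $L^4$ — it uses only that $T^4_f$ is a union of connected components of $M^4_{2+1}(B_f)$ (\cref{prop:tropical_rigidity}), the stability condition, and the absence of proper curves in $X_k$ — so that no component of $C$ maps into $(\partial Y')\an$ or is contracted to a fibre of $\tau$ over $O$, and $C=\st(C)$. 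Since the stabilization of a $3$-pointed rational curve is the unique stable $3$-pointed rational curve, this forces $C\cong\bbP^1$; and its image $f(C)$, being a proper analytic curve in the analytification of a proper $k$-scheme, is algebraic by GAGA.

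The heart of the proof is to pin these maps down using that $\hL^4$ is \emph{straight} with respect to the $\Z$-affine structure on $B\setminus O$. Because the image of $\hL^4$ avoids $O$, over a simply connected neighbourhood of the tropical segment the retraction $\tau_Y$ is a standard affinoid torus fibration in suitable toric coordinates, and I would invoke the tropicalization results of \cite{Yu_Tropicalization_2014} to conclude that a curve whose tropical image is a straight segment in the direction of $\hL^4$ with weight vector $w^4_1$ must, being irreducible, have its Newton polygon equal to a single lattice segment of length $m^4_1$; equivalently, $f(C)_{\mathrm{red}}$ is the closure $Z_c$ of a coset of the $1$-dimensional subtorus in the direction of $\hL^4$, for some parameter $c$. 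Such a $Z_c$ is a toric $\bbP^1$ meeting $\partial Y'$ in exactly two points, which by \cref{const:sbeta} must lie on $D^4_1$ and on $D^4_2$; pulling $D^4_1$ and $D^4_2$ back along $f$ then forces $m^4_1=m^4_2$ and shows $f\colon\bbP^1\to Z_c$ to be the cover of degree $m^4_1$ totally ramified over the two torus-fixed points of $Z_c$. In particular $f_*[\bbP^1]=m^4_1\,[Z_c]$ is independent of $c$, and I would check — by unwinding the definition of the extension classes in \cite[\S 4, 5]{Yu_Enumeration_cylinders_2015}, using that $\hGamma^4$ is obtained by gluing the straight extension of $\hL^1$ beyond $v^1_2$ to that of $\hL^2$ beyond $v^2_1$ — that its image in $\NE(Y)$ is exactly $(\beta^1)'+(\beta^2)'-(\beta^3)'=\beta^4$.

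Granting this, the two cases are settled as follows. If $\beta\neq\beta^4$ then no such stable map can exist, so $\cM(L^4,\beta,T)_{\widebar w}=\emptyset$ and its degree is $0$. If $\beta=\beta^4$, then since $\widebar w$ is a general point of $W^d$ (as guaranteed by the construction of $V^d$ in \cref{prop:finite_etale}) it lies on a unique coset, so $Z_c$, and hence $c\in\kappa(\widebar w)$, is determined by $\widebar w$; the marked points $s_1,s_2$ are forced to be the two ramification points of $f$, while the $m^4_1$ possible choices of $s_3$ in $f\inv(\widebar w)$ are permuted transitively by the residual $\mu_{m^4_1}$-automorphisms of the source that commute with $f$ and fix $s_1,s_2$, hence yield a single point of the moduli space carrying a trivial automorphism group. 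Thus $\cM(L^4,\beta,T)_{\widebar w}$ is one reduced $\kappa(\widebar w)$-point, and since it is finite étale over $\widebar w$ its degree equals $1$.

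The step I expect to be the real obstacle is the middle one: making rigorous the identification of a ``straight-tropicalization'' curve with the toric curve $Z_c$ over $k=\C\llp t\rrp$. This entails controlling the monodromy of the singular $\Z$-affine structure on $B\setminus O$ (which is why one works away from $O$), giving a clean Newton-polygon description of analytic curves with prescribed tropicalization, and verifying the bookkeeping identity $\beta^4=(\beta^1)'+(\beta^2)'-(\beta^3)'$, which unwinds the extension-class construction of \cite{Yu_Enumeration_cylinders_2015}. The transport of \cref{lem:components_of_the_domain_curve} to $L^4$ and the treatment of a possibly non-classical point $\widebar w$ (by base change to its completed residue field) are comparatively routine.
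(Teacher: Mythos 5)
Your overall strategy is viable, but the step you yourself flag as ``the real obstacle'' is a genuine gap, and the tools you propose to fill it do not suffice. The assertion that ``over a simply connected neighbourhood of the tropical segment the retraction $\tau_Y$ is a standard affinoid torus fibration in suitable toric coordinates'' is not automatic: an affinoid torus fibration is only \emph{locally} standard by definition, and trivializing it globally over the whole tube above the straight spine (including the two compactified ends at $D^4_1$, $D^4_2$) is a nonabelian gluing problem, not a monodromy computation. The tropicalization results of \cite{Yu_Tropicalization_2014} give balancing and the existence of the tropical image, but no ``Newton polygon'' rigidity statement of the kind you invoke. This is exactly what \cref{prop:glue_annuli} (together with \cref{lem:automorphism1,lem:automorphism2}, via a Temkin-style norm-contraction argument) provides, and it is the one genuinely new analytic input of this section: once the tube over $\hL^4$ is identified with a standard product domain up to the boundary, your conclusion that the transverse coordinate is constant on $C\cong\bbP^1$, that $f$ is the totally ramified $m^4_1$-fold cover of a coset closure $Z_c$, and that $m^4_1=m^4_2$, all follow as you describe. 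As written, however, the heart of your proof is asserted rather than proved; you should either invoke \cref{prop:glue_annuli} or reprove its content.

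Modulo that gap, your route differs from the paper's in an instructive way. The paper does not analyze multiple covers in $Y'$ at all: it transfers the computation to the parametrized moduli space $\cM^p(L^4,\beta,T)$ over $\tY$ (using \cref{prop:finite_etale} and \cref{thm:degree} to equate the degrees), where the graph construction makes the weight vectors $(1,w^4_i)$ primitive and the boundary intersections transversal; \cref{prop:glue_annuli} then yields a domain $\widetilde U\simeq(\bbP^1_k)\an\times(\text{2-dimensional affinoid})$ containing all such stable maps, so the curves are forced to be fibers $\bbP^1\times\{\mathrm{pt}\}$, the class is forced to be $\beta^4$, and the fiber of $\ev_3$ over a point is a single reduced point -- no cover/automorphism bookkeeping needed. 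Your direct analysis in $Y'$ (deck transformations identifying the $m^4_1$ choices of $s_3$, trivial automorphism group after marking three points, degree $1$ over a general $\widebar w$) is correct and would give an alternative finish, at the cost of the more delicate multiple-cover discussion; the class identity $\beta^4=(\beta^1)'+(\beta^2)'-(\beta^3)'$ is, as you say, bookkeeping of the extension tails, and your transport of \cref{lem:components_of_the_domain_curve} and \cref{lem:stable_domain} to $L^4$ (using the standing assumption from \cref{prop:deformation}) is unproblematic.
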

\begin{proof}
	For any rigid point $w^4\in\tY\an$, let $\cM^\rp(\hL^4,\beta,T)_{w^4}$ be the fiber of the evaluation map
	\[\ev_3\colon\cM^\rp(\hL^4,\beta,T)\to\tY\an\]
	over $w^4$.
	By \cref{prop:finite_etale} and \cref{thm:degree}, replacing $\hL^0$ with $\hL^4$, there exists a rigid point $w^4\in\Gmk\an\times W^\rd\subset\tY\an$ such that $\cM^\rp(\hL^4,\beta,T)_{w^4}$ is finite étale over $w^4$ and its degree equals the degree of $\cM(\hL^4,\beta,T)_{\widebar w}$.
	
	Recall the notation
	\[\hL^4=[\hGamma^4,(u^4_1,u^4_2),\ \hh^4\colon(\hGamma^4)^\circ\to B].\]
	Let $\tB$ and $\tB'$ be as in \cref{const:tB}.
	For each $d$-dimensional simplicial cone in $\tB'$, we compactify it as $[0,+\infty)^d\subset[0,+\infty]^d$.
	The gluing of all such compactifications gives a natural compactification of $\tB'$, which we denote by $\bB$.
	Note that $\bB$ is isomorphic to the closure of $\tB'$ inside $\tY\an$ (see \cite[Remark 4.12]{Gubler_Skeletons_2014}).
	The retraction map $\ttau\colon(\tY\setminus\partial\tY)\an\to\tB'$ extends to a retraction map $\sigma\colon\tY\an\to\bB$ (see \cite[Theorem 4.13]{Gubler_Skeletons_2014}).
	
	Let
	\[h\coloneqq(\iota,\hh^4)\colon(\hGamma^4)^\circ\to\R\times B=\tB,\]
	where the map $\iota\colon(\hGamma^4)^\circ\to\R$ is the homeomorphism preserving the \Zaffine structures.
	Let $x_0$ be a point in $\hGamma^4$ and let $\gamma$ be a nonzero tangent vector at $x_0$.
	Let $\delta$ and $\epsilon$ be two small rational tangent vectors at $h(x_0)\in\tB$, so that together with $h(\gamma)$, they generate the tangent space of $\tB$ at $h(x_0)$.
	
	Let
	\[U\coloneqq\Set{x+a\delta_x+b\epsilon_x | x\in(\hGamma^4)^\circ,\ a,b\in[0,1]}\subset\tB,\]
	where $\delta_x$ and $\epsilon_x$ denote respectively the parallel transport of $\delta$ and $\epsilon$ along $h((\hGamma^4)^\circ)$ to the point $h(x)$.
	We choose $\delta$ and $\epsilon$ sufficiently small , so that $U$ is disjoint from $\R\times\{O\}\subset\tB$.
	
	Let $\oU$ be the closure of $U$ in $\bB$ and let $\tU\coloneqq\sigma\inv(\oU)\subset\tY\an$.
	Note that the retraction map $\sigma\colon\tY\an\to\bB$, outside $[-\infty,+\infty]\times\{O\}\subset\bB$, locally on the target, is modeled on the coordinate-wise valuation map $(((\bbP^1_k)\an))^3\to[-\infty,+\infty]^3$.
	Therefore, by \cref{prop:glue_annuli}, the analytic domain $\tU\subset\tY\an$ is isomorphic to the product of a projective line $(\bbP^1_k)\an$ with a 2-dimensional polyannulus.
	By construction, for any stable map in $\cM^\rp(\hL^4,\beta,T)_{w^4}$, its image lies in $\tU$, and its degree over $(\bbP^1_k)\an$ is equal to 1.
	So its curve class lies in $s^\rp(\hL^4,\beta^4)$.
	Moreover, the product structure of $\tU$ implies that when $\beta=\beta^4$, the degree of $\cM^\rp(\hL^4,\beta,T)_{w^4}$ over $w^4$ is 1.
	Finally we conclude by \cref{thm:degree}.
\end{proof}

Let $\bcM_{0,3}(\widebar w)$ denote the stack of 3-pointed rational stable maps into the point $\widebar w$.
The stack $\bcM_{0,3}(\widebar w)$ is just a point.
Similarly, we denote $\bcM_{0,3}(\widebar w')$ for the point $\widebar w'$.
Let $\beta^\rd\coloneqq\beta^3+(\beta^1)'+(\beta^2)'$.
It follows from \cref{prop:point} that we have two isomorphisms
\begin{align}
\label{eq:isom_w'}
&\cM^\rd(\beta^\rd,T)_{w'}\simeq\coprod_{\hbeta^1+\hbeta^2=\beta^\rd}\cM(\hL^1,\hbeta^1,T)_{\widebar w'}\times\bcM_{0,3}(\widebar w')\times\cM(\hL^2,\hbeta^2,T)_{\widebar w'},\\
\label{eq:isom_w}
&\cM^\rd(\beta^\rd,T)_{w}\simeq\coprod_{\hbeta^3+\hbeta^4=\beta^\rd}\cM(\hL^3,\hbeta^3,T)_{\widebar w}\times\bcM_{0,3}(\widebar w)\times\cM(\hL^4,\hbeta^4,T)_{\widebar w}.
\end{align}

By Propositions \ref{prop:degree} and \ref{prop:straight_spine}, we deduce from \eqref{eq:isom_w'} and \eqref{eq:isom_w} the following equalities:
\begin{align}
\deg\cM^\rd(\beta^\rd,T)_{w'} & = \sum_{\hbeta^1+\hbeta^2=\beta^\rd} \deg\cM(\hL^1,\hbeta^1,T)_{\widebar w'}\cdot \deg\cM(\hL^2,\hbeta^2,T)_{\widebar w'}\nonumber\\
& = \sum_{\hbeta^1+\hbeta^2=\beta^\rd} N(L^1,\hbeta^1-(\beta^1)') \cdot N(L^2,\hbeta^2-(\beta^2)')\nonumber\\
& = \sum_{\beta^1+\beta^2=\beta^3} N(L^1,\beta^1)\cdot N(L^2,\beta^2), \label{eq:equality_w'}\\
\deg\cM^\rd(\beta^\rd,T)_w & = \sum_{\hbeta^3+\hbeta^4=\beta^\rd} \deg\cM(\hL^3,\hbeta^3,T)_{\widebar w}\cdot\deg\cM(\hL^4,\hbeta^4,T)_{\widebar w}\nonumber\\
& = \deg\cM(\hL^3,\beta^3+(\beta^3)',T)_{\widebar w}\nonumber\\
& = N(L^3,\beta^3). \label{eq:equality_w}
\end{align}

By \cref{prop:finite_etale} and the connectedness of $V^\rd$, we have $\deg\cM^\rd(\beta^\rd,T)_w=\deg\cM^\rd(\beta^\rd,T)_{w'}$.
Combining \eqref{eq:equality_w'} and \eqref{eq:equality_w}, we conclude the proof of \cref{thm:gluing_formula}.

\bibliographystyle{plain}
\bibliography{dahema}

\end{document}